\newcommand\bes{\begin{eqnarray}}
\newcommand\ees{\end{eqnarray}}
\newtheorem{theorem}{Theorem}[section]
\newtheorem{lemma}[theorem]{Lemma}
\newtheorem{corollary}[theorem]{Corollary}
\newtheorem{claim}[theorem]{Claim}
\newtheorem{remark}[theorem]{Remark}
\numberwithin{equation}{section}
\begin{document}
\title[Time periodic nonlocal dispersal operators and applications]{\textbf{The generalised principal eigenvalue of time-periodic nonlocal dispersal operators and applications}}

\author[Y.H. Su, W.T. Li, Y. Lou and F.Y. Yang]{Yuan-Hang Su$^{1}$, Wan-Tong Li$^{1,*}$, Yuan Lou$^{2}$ ~and Fei-Ying Yang$^{1}$}
\thanks{\hspace{-.5cm}
$^1$ School of Mathematics and Statistics, Lanzhou University, Lanzhou, Gansu, 730000,
PRC %People's Republic of China.
\\
%\hspace{-.6cm}
$^2$ Department of Mathematics, The Ohio State University, Columbus, Ohio 43210, USA
\\
$^*${\sf Corresponding author} (wtli@lzu.edu.cn)}

\date{\today}

\begin{abstract}
This paper is mainly concerned with the generalised principal eigenvalue
for time-periodic nonlocal dispersal operators.
We first establish the equivalence between two
different characterisations of the generalised principal eigenvalue.
We further investigate the dependence of the generalised principal eigenvalue on the frequency, the dispersal rate and the dispersal spread.
Finally, these qualitative results for time-periodic linear operators
are applied to time-periodic nonlinear KPP equations with nonlocal dispersal,
%. In addition, we also discuss
focusing on the effects of the frequency, the dispersal rate and the dispersal spread
on the existence and stability
of positive time-periodic solutions to  nonlinear equations.

\medskip

\textbf{Key words:} Time-periodic nonlocal dispersal operator; Generalised principal eigenvalue; KPP equation;
Positive time-periodic solution; Asymptotic behavior

\medskip

\textbf{AMS Subject Classification (2010):} 35K57; 35R09; 45C05; 47G20; 92D25.
\end{abstract}

\maketitle

\tableofcontents

\newpage

%%%%%%%%%%%%%%%%%%%%%%%%%%%%%%%%%%%%%%%%%%%%%%%%%%%%%%%%%%%%%%%%%%%%%%%%
\section{Introduction}

\noindent

In recent years nonlocal dispersal evolution equations have been widely
used to model non-adjacent diffusive phenomena which exhibit long range
internal interactions; See \cite{Berestycki-2016-JMB,Fife-2003-Trends,
Hutson-2003-JMB,Kot-1996-Ecology,Murray-2002-IAM} and references therein. The principal eigenvalues of nonlocal dispersal
operators serve as a basic tool for the investigation of nonlocal dispersal
equations. Many studies have been devoted to the understanding  of the principal eigenvalues for elliptic-type nonlocal dispersal operators
and their qualitative properties; See \cite{Bates-2007-JMAA,Berestycki-2016-JFA,
Coville-2010-JDE,Coville-2013-Poincar,Coville-2013-AML,Rossi-2009-JDE,
Hutson-2003-JMB,
Kao-2010-DCDS,Li-2017-DCDS,Shen-2015-DCDS,Shen-2010-JDE,Su-2019-submitted,
Sun-2015-DCDS,Sun-2014-JDE,Yang-2016-DCDSA} and references therein. As far as time-periodic nonlocal dispersal operators are concerned, however, there
is less understanding for the associated principal eigenvalue, especially the dependence of the principal eigenvalue with respect to the underlying parameters. Similar to the time-periodic random dispersal operators
\cite{Hess-1991,Nadin-2009-AMPA,Peng-2015-CVPDE},
the principal eigenvalues for time-periodic nonlocal dispersal operators %are important
%, not only
%because it contains elliptic-type nonlocal dispersal operators, but
%also because it
are relevant %and interesting in its own right
when a time-periodic
environment is involved.

In this paper, we are interested in the following time-periodic nonlocal
dispersal operators:
\begin{equation}\label{101}
L_{\Omega}^{\tau,\mu,\sigma,m}[v](x,t):= -\tau v_{t}(x,t)+ \frac{\mu}{\sigma^{m}}\bigg(\int_{\Omega}J_{\sigma}(x-y)v(y,t)dy
-h^{\sigma}(x)v(x,t)\bigg)+a(x,t)v(x,t),
\end{equation}
where $(x,t)\in\bar{\Omega}\times\mathbb{R}$, $\Omega\subset\mathbb{R}^N$ is a bounded domain, $\tau>0$ is the frequency, $\mu>0$ is the dispersal rate, $\sigma>0$ is the dispersal spread which characterises the dispersal range, $m\geq0$ is the cost parameter, $J_{\sigma}(\cdot)=\frac{1}{\sigma^{N}}J(\frac{\cdot}{\sigma})$ is the scaled dispersal kernel. Throughout the paper, we will make the following assumptions on the dispersal kernel $J$, a family of functions $\{h^{\sigma}\}_{\sigma>0}$ and function $a$:
\begin{itemize}
\item[($J$)] $J\in C(\mathbb{R}^N)$ is nonnegative symmetric with
compact support on the unit ball $B_{1}(0)$, $J(0)>0$ and $\int_{\mathbb{R}^N}J(z)dz=1$;
\item[($H$)] $h^{\sigma}\in C(\bar{\Omega})$ and
 there exists a constant $M>0$ such that
  $\|h^{\sigma}\|_{C(\bar{\Omega})}\leq M$ for all $\sigma>0$;
\item[($A$)] $a\in C_{1}(\bar{\Omega}\times\mathbb{R}):=  %$ and $C_{1}(\bar{\Omega}\times\mathbb{R})
\{v\in C(\bar{\Omega}\times\mathbb{R}) \ | \ v(x,t+1)= v(x,t), \ (x,t)\in \bar{\Omega}\times\mathbb{R}\}$.
\end{itemize}
Define the spaces $\chi_{\Omega}, \chi_{\Omega}^{+}, \chi_{\Omega}^{++}$ as follows:
\begin{equation*}
\begin{aligned}
&\chi_{\Omega}=\{v\in C^{0,1}(\bar{\Omega}\times\mathbb{R}) \ | \ v(x,t+1)= v(x,t), \ (x,t)\in \bar{\Omega}\times\mathbb{R}\},\\
&\chi_{\Omega}^{+}=\{v\in \chi_{\Omega}\ | \ v(x,t)\geq 0, \ (x,t)\in \bar{\Omega}\times\mathbb{R}\},\\
&\chi_{\Omega}^{++}=\{v\in \chi_{\Omega}\ | \ v(x,t)>0, \ (x,t)\in \bar{\Omega}\times\mathbb{R}\},
\end{aligned}
\end{equation*}
where $C^{0,1}(\bar{\Omega}\times\mathbb{R})$ denotes the class of functions that are continuous in $x$ and $C^{1}$ in $t$. The operator
$L_{\Omega}^{\tau,\mu,\sigma,m}$ is then considered as an unbounded linear operator on the space $C_{1}(\bar{\Omega}\times\mathbb{R})$ with domain $\chi_{\Omega}$ , namely,
\begin{equation*}
L_{\Omega}^{\tau,\mu,\sigma,m}: \chi_{\Omega}\ \subset C_{1}(\bar{\Omega}\times\mathbb{R})
\rightarrow C_{1}(\bar{\Omega}\times\mathbb{R}).
\end{equation*}
%Set
%\begin{equation*}
%\hat{a}(x):=\int_{0}^{1} a(x,t)dt,~~~~~x\in \bar{\Omega}.
%\end{equation*}

It may be worthwhile to point out that the time-periodic nonlocal dispersal operators
of the form \eqref{101} %are fairly general: %extensive. On the one hand, the time periodic nonlocal dispersal operators \eqref{101}
%are extensions of  elliptic-type nonlocal dispersal operators, i.e., %as the function space of the operators $\chi_{\Omega}=C(\bar{\Omega})$ and the function
%$a(x,t)=\hat{a}(x)$. %, we obtain
%the elliptic-type nonlocal dispersal operators. On the other hand, the time periodic nonlocal dispersal operators \eqref{101} contain
include several kinds of boundary conditions, such as Dirichlet, Neumann and  mixed type; % boundary condition.
%We refer to
See \cite{Rossi-2006-JMPA,Raul-2015-DCDS,Rawal-2012-JDDE}. % about a kind of boundary conditions.

%It is well known that the principal eigenvalue of the operator is a notion
%related to the existence of an eigen-pair, i.e., an eigenvalue associated with a positive eigenfunction.
The principal eigenvalues for time-periodic nonlocal dispersal operators have been studied in \cite{Bao-2019-CPAA,Hutson-2008-RMJM,Rawal-2012-JDDE,
Shen-2007-JDE,Shen-2015-JDE,Shen-2017,shen-2019,Sun-2017-JDE}. In this
paper, we adopt the approaches as in %, in which Berestycki, Nirenberg and Varadhan in
%\cite{Berestycki-1994-CPAM} give a very simple general definition
%of the principal eigenvalues of the second order elliptic operators,
Berestycki, Nirenberg and Varadhan
\cite{Berestycki-1994-CPAM} and
Berestycki, Coville and Vo \cite{Berestycki-2016-JFA}
%and Coville \cite{Coville-2010-JDE} %define the generalised principal eigenvalues of
%elliptic-type nonlocal dispersal operators,
for the definition of
the generalised principal eigenvalue $\lambda_{p}(L_{\Omega}^{\tau,\mu,\sigma,m})$:
\begin{equation*}
\lambda_{p}(L_{\Omega}^{\tau,\mu,\sigma,m}):=\sup \{\lambda\in \mathbb{R}\ | \ \exists\ v\in \chi_{\Omega}^{++} \ \text{s.t.} \ (L_{\Omega}^{\tau,\mu,\sigma,m}+\lambda)[v]\leq 0 \ \text{in} \ \bar{\Omega}\times \mathbb{R} \}.
\end{equation*}
%On the other hand,

Another definition for
the generalised principal eigenvalue of $L_{\Omega}^{\tau,\mu,\sigma,m}$ is given by
\begin{equation*}
\lambda_{p}^{'}(L_{\Omega}^{\tau,\mu,\sigma,m}):=\inf \{\lambda\in \mathbb{R}\ | \ \exists\ v\in \chi_{\Omega}^{++} \ \text{s.t.} \ (L_{\Omega}^{\tau,\mu,\sigma,m}+\lambda)[v]\geq 0  \ \text{in} \ \bar{\Omega}\times \mathbb{R} \},
\end{equation*}
motivated by the works of Donsker and Varadhan in \cite{Donsker-1975-PNAS} and Berestycki, Coville and Vo in \cite{Berestycki-2016-JFA}.

%We should point out that
Shen and Vo proved in
\cite{shen-2019} that $\lambda_{p}=\lambda_{p}^{'}$ when $\lambda_{p}$
is the principal eigenvalue and $h^{\sigma}(x)\equiv1$ for all $\sigma>0$.
Our first main result
proves that for general $h^\sigma$,
$\lambda_{p}=\lambda_{p}^{'}$ always holds,
and $\lambda_{p}$ can be characterised as the infimum of the
spectrum of $-L_{\Omega}^{\tau,\mu,\sigma,m}$, whether $\lambda_{p}$ is an eigenvalue or not. %More precisely, we have the following result.
\begin{theorem}\label{th101}
Assume that {\rm(J)}, {\rm(H)} and {\rm(A)} hold. Then %there holds
\begin{equation*}
\lambda_{p}(L_{\Omega}^{\tau,\mu,\sigma,m})=\lambda_{p}^{'}(L_{\Omega}^{\tau, \mu,\sigma,m})
=\lambda_{1},
\end{equation*}
where $\lambda_{1}=\inf\{Re\lambda \ | \ \lambda\in \sigma(-L_{\Omega}^{\tau,\mu,\sigma,m})\}$ and $\sigma(-L_{\Omega}^{\tau,\mu,\sigma,m})$ is the spectrum of $-L_{\Omega}^{\tau,\mu,\sigma,m}$.
\end{theorem}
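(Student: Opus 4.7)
The strategy is to organise the statement as the chain of inequalities $\lambda_p\leq \lambda_1\leq \lambda_p'\leq \lambda_p$: the first two follow from positivity of the solution semigroup generated by $-L_\Omega^{\tau,\mu,\sigma,m}$, while the closing inequality $\lambda_p'\leq \lambda_p$ must be obtained by approximating $L_\Omega^{\tau,\mu,\sigma,m}$ by operators that admit a genuine positive principal eigenpair.

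For the easy comparisons, I would first check via a maximum principle for the time-periodic nonlocal heat equation that $-L_\Omega^{\tau,\mu,\sigma,m}$ generates a positivity-preserving $C_0$-semigroup $\{S(t)\}_{t\geq 0}$ on $C_1(\bar\Omega\times\mathbb{R})$. Then for $\lambda<\lambda_p$ with witness $v\in\chi_\Omega^{++}$ satisfying $(L_\Omega^{\tau,\mu,\sigma,m}+\lambda)v\leq 0$, iterating the pointwise comparison $S(t)v\geq e^{\lambda t}v$ forces the spectral bound to satisfy $\lambda_1=s(-L_\Omega^{\tau,\mu,\sigma,m})\geq \lambda$, and a supremum gives $\lambda_p\leq \lambda_1$. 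The symmetric argument with a positive supersolution gives $\lambda_1\leq \lambda_p'$.

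For the remaining inequality I would introduce a regularised family $L_n:=L_\Omega^{\tau,\mu,\sigma,m}-\tfrac{1}{n}\mathcal{K}$, where $\mathcal{K}$ is a positivity-preserving compactifying perturbation (for instance a secondary convolution with a smooth kernel, or a small local Laplacian where admissible) chosen so that the time-$1$ Poincar\'e map of $-L_n$ is compact and strongly positive on $\chi_\Omega^+$. The Krein-Rutman theorem applied to the period map then yields a principal eigenpair $(\lambda_p(L_n),\phi_n)$ with $\phi_n\in \chi_\Omega^{++}$. Rearranging the eigenvalue identity gives $(L_\Omega^{\tau,\mu,\sigma,m}+\lambda_p(L_n))\phi_n=\tfrac{1}{n}\mathcal{K}\phi_n\geq 0$, so $\phi_n$ is an admissible test function in the definition of $\lambda_p'$ and therefore $\lambda_p'(L_\Omega^{\tau,\mu,\sigma,m})\leq \lambda_p(L_n)$. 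Combined with the monotonicity $\lambda_p(L_n)\searrow$ toward a limit bounded below by $\lambda_p(L_\Omega^{\tau,\mu,\sigma,m})$, the desired inequality follows as soon as this limit is identified with $\lambda_p(L_\Omega^{\tau,\mu,\sigma,m})$.

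I expect the main obstacle to be exactly this last identification. Because $L_\Omega^{\tau,\mu,\sigma,m}$ contains no genuine local diffusion, the eigenfunctions $\phi_n$ have no automatic $x$-regularity and are not obviously precompact, so $\lambda_1$ need not be an eigenvalue and one cannot simply extract a positive limit eigenfunction. The resolution will rely on (i) the $C^{0,1}$-in-$t$ structure built into $\chi_\Omega$ together with an ODE-type estimate coming from reading the eigenvalue equation as a first-order equation in $t$, (ii) the uniform control on the family $\{h^\sigma\}_{\sigma>0}$ provided by $(H)$, and (iii) a comparison with the case $h^\sigma\equiv 1$ already handled by Shen-Vo \cite{shen-2019}; a weak-$\ast$ limit in the sense of measures may be required in place of uniform convergence of $\phi_n$ in order to close the argument.
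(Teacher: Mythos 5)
Your outer strategy — chain the inequalities and reduce the hard direction $\lambda_p'\leq\lambda_p$ to an approximation by operators admitting a genuine principal eigenpair, whose eigenfunction is then admissible in the definition of $\lambda_p'$ — is precisely the structure of the paper's proof (Theorems 2.3 and 2.4). The difference, and the gap, is in the approximation mechanism itself. You propose $L_n:=L_\Omega^{\tau,\mu,\sigma,m}-\tfrac{1}{n}\mathcal{K}$ with $\mathcal{K}$ a ``positivity-preserving compactifying perturbation'' and invoke Krein--Rutman on the time-$1$ map. This does not work in either of the two forms you offer. If $\mathcal{K}$ is a compact, positivity-preserving operator (a secondary convolution), the period map of $-L_n$ is still of the form (non-compact multiplication operator) $+$ (compact), because the Duhamel decomposition isolates the semigroup $e^{-\int_0^t(\frac{\mu}{\sigma^m}h^\sigma-a)/\tau\,ds}$ as a multiplication factor, and a compact perturbation of the generator yields only a compact perturbation of the period map, not compactness of the period map itself. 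Hence Krein--Rutman is not available and the essential spectrum is untouched — which is exactly the obstruction that makes $\lambda_1$ potentially fail to be an eigenvalue in the first place. If instead $\mathcal{K}$ is a local Laplacian, the period map does become compact, but $\mathcal{K}\phi_n$ has no sign, so the rearranged identity $(L+\lambda_p(L_n))\phi_n=\tfrac{1}{n}\mathcal{K}\phi_n$ no longer exhibits $\phi_n$ as a supersolution, and $\phi_n$ is not a valid competitor in the infimum defining $\lambda_p'$; moreover the domain of the operator changes entirely, so the comparison of generalised principal eigenvalues across the family is no longer straightforward.

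The paper instead uses the zeroth-order perturbation result of Shen--Vo (stated as Lemma~\ref{le201}): for any $\epsilon>0$ one can modify the coefficient $b$ to $b_\epsilon$ with $\max|b-b_\epsilon|<\epsilon$ so that $\lambda_1^\epsilon$ becomes a genuine principal eigenvalue of $M_\Omega(b_\epsilon)$ with $|\lambda_1^\epsilon-\lambda_1|<\epsilon$. This perturbation does not attempt to compactify the resolvent; it separates $\lambda_1^\epsilon$ from the essential spectrum, which for these operators is governed by $\min_{\bar\Omega}\{\frac{\mu}{\sigma^m}h^\sigma-\hat a\}$. Combined with the Lipschitz continuity of $\lambda_p$ in $b$ (Lemma~\ref{le202}), this gives $\lambda_p=\lambda_1$ directly (Theorem~\ref{th201}), and the eigenfunction $\varphi_\delta$ of $M_\Omega(b_\delta)$ produces a test function showing $\lambda_p'\leq\lambda_p+2\delta$ (Theorem~\ref{th202}). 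You correctly anticipated that the limiting identification is delicate when no compactness is available, but the root of the difficulty is earlier: the perturbation you chose never delivers the principal eigenpair in the first place. The correct fix, available in this nonlocal setting, is to perturb the coefficient, not the operator structure. Finally, note the paper also gives a clean direct proof of $\lambda_p\leq\lambda_p'$ via a ratio $w=\psi/\varphi$ and an interior-maximum contradiction, which avoids the semigroup machinery you sketch for the easy comparisons.
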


Next, we turn to study the influences of the frequency $\tau$, the dispersal rate $\mu$ and the dispersal spread $\sigma$ on the generalised principal eigenvalue $\lambda_{p}(L_{\Omega}^{\tau,\mu,\sigma,m})$. The following result
establishes the monotonicity and asymptotic behaviors of the generalised principal eigenvalue $\lambda_{p}(L_{\Omega}^{\tau,\mu,\sigma,m})$ with respect to the frequency $\tau$:
%as motivated by the recent work of Liu et al.  in \cite{Liu-2019-PAMS}
%for time-periodic parabolic operators;
%More precisely, we obtain the monotonicity and asymptotic behavior of the generalised principal eigenvalue $\lambda_{p}(L_{\Omega}^{\tau,\mu,\sigma,m})$ with respect to the frequency $\tau$,
%See also Hutson et al. % Mischaikow and Pol$\mathrm{\acute{a}\breve{c}}$ik in
%\cite{Hutson-2001-JMB} and
%Liu, Lou, Peng and Zhou in \cite{Liu-2019-PAMS},
%Nadin \cite{Nadin-2009-AMPA}.
%investigating the corresponding eigenvalue problem for time periodic parabolic operators.

\begin{theorem}\label{th12}
Assume that {\rm(J)}, {\rm(H)} and {\rm(A)}  hold. Then the following conclusions hold:
\begin{itemize}
\item[(i)] The function $\tau\mapsto\lambda_{p}(L_{\Omega}^{\tau,\mu,\sigma,m})$ is % continuous
non-decreasing and continuous on $(0,\infty)$.
Moreover, if $\lambda_{p}(L_{\Omega}^{\tau,\mu,\sigma,m})$ is a principal eigenvalue, then the following assertions hold:
    \begin{itemize}
    \item[(a)] If $a(x,t)=\hat{a}(x)+g(t)$, then $\lambda_{p}(L_{\Omega}^{\tau,\mu,\sigma,m})$ is constant for $\tau>0$.
    \item[(b)] Otherwise $\frac{\partial\lambda_{p}}{\partial\tau}
        (L_{\Omega}^{\tau,\mu,\sigma,m})>0$ for every $\tau>0$;
    \end{itemize}

\item[(ii)] If $a\in C^{0,1}(\bar{\Omega}\times \mathbb{R})$, then there holds
\begin{equation*}
\lim\limits_{\tau\rightarrow0^{+}}\lambda_{p}(L_{\Omega}^{\tau,\mu,\sigma,m})=  \int_{0}^{1}\lambda_{p}(N_{\Omega}^{t})dt.
\end{equation*}
Here, for each fixed $t\in[0,1]$, $\lambda_{p}(N_{\Omega}^{t})$ is the generalised principal eigenvalue of the operator $N_{\Omega}^{t}$
\begin{equation*}
N_{\Omega}^{t}[v](x):=\frac{\mu}{\sigma^{m}}\bigg(\int_{\Omega}J_{\sigma}(x-y)v(y)dy
-h^{\sigma}(x)v(x)\bigg)+a(x,t)v(x);
\end{equation*}
\item[(iii)] If $a\in C^{0,1}(\bar{\Omega}\times \mathbb{R})$, then there holds
\begin{equation*}
\lim\limits_{\tau\rightarrow\infty}\lambda_{p}(L_{\Omega}^{\tau,\mu,\sigma,m})=  \lambda_{p}(N_{\Omega}),
\end{equation*}
where $\lambda_{p}(N_{\Omega})$ is the generalised principal eigenvalue of the operator $N_{\Omega}$
\begin{equation*}
N_{\Omega}[v](x):=\frac{\mu}{\sigma^{m}}\bigg(\int_{\Omega}J_{\sigma}(x-y)v(y)dy
-h^{\sigma}(x)v(x)\bigg)+\hat{a}(x)v(x)
\end{equation*}
with
\begin{equation*}
\hat{a}(x):=\int_{0}^{1} a(x,t)dt,~~~~~x\in \bar{\Omega}.
\end{equation*}
\end{itemize}
\end{theorem}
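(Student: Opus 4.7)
The plan is to handle the three parts using the equivalence $\lambda_p=\lambda_p'$ from Theorem~\ref{th101} together with two main tools: a log--Jensen inequality in the time variable (to compare $L_\Omega^{\tau,\mu,\sigma,m}$ with its time-averaged elliptic companion $N_\Omega$) and explicit WKB-type test functions (to handle the large- and small-$\tau$ asymptotics). For (i), continuity of $\tau\mapsto\lambda_p(L_\Omega^{\tau,\mu,\sigma,m})$ is a perturbation statement: since $L_\Omega^{\tau_2,\mu,\sigma,m}-L_\Omega^{\tau_1,\mu,\sigma,m}=-(\tau_2-\tau_1)\partial_t$ is bounded relative to $L_\Omega^{\tau,\mu,\sigma,m}$ on the natural test classes and Theorem~\ref{th101} realises $\lambda_p$ as the spectral bound $\lambda_1$, standard spectral-perturbation bounds give continuity. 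For monotonicity I would work with the representation $\lambda_p(L_\Omega^{\tau,\mu,\sigma,m})=-\tau\log r(T_1^\tau)$, where $T_1^\tau$ is the time-$1$ solution operator of the parabolic equation $\tau u_t=\mathcal Nu+au$; a comparison of $T_1^{\tau_1}$ and $T_1^{\tau_2}$ via the log--Jensen inequality yields $\lambda_p(L_\Omega^{\tau_1,\mu,\sigma,m})\le\lambda_p(L_\Omega^{\tau_2,\mu,\sigma,m})$. When $\lambda_p$ is a principal eigenvalue with positive eigenfunction $\phi$ and positive adjoint eigenfunction $\phi^*$, differentiating the eigenvalue equation with respect to $\tau$ and pairing against $\phi^*$ yields
\[
\frac{\partial\lambda_p}{\partial\tau}\,\iint\phi\phi^*\,dx\,dt \;=\; \iint \phi_t\phi^*\,dx\,dt;
\]
in case (a) the explicit ansatz $v(x,t)=\varphi(x)\exp(G(t)/\tau)$, with $G'=g-\int_0^1 g$ and $\varphi$ the principal eigenfunction of $N_\Omega$, turns out to be an eigenfunction of $L_\Omega^{\tau,\mu,\sigma,m}$ with eigenvalue $\lambda_p(N_\Omega)$, proving $\lambda_p(L_\Omega^{\tau,\mu,\sigma,m})\equiv\lambda_p(N_\Omega)$; in case (b) a rigidity argument identifies the vanishing of $\iint\phi_t\phi^*$ with the separable form of $a$, giving the strict monotonicity.

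For (iii), the upper bound $\lambda_p(L_\Omega^{\tau,\mu,\sigma,m})\le\lambda_p(N_\Omega)$ follows by inserting a near-sup test function $v\in\chi_\Omega^{++}$ into the sup-definition, dividing $(L_\Omega^{\tau,\mu,\sigma,m}+\lambda)v\le 0$ by $v$, and integrating over $t\in[0,1]$ so the $\partial_t$ term vanishes by periodicity. Applying the log--Jensen inequality
\[
\int_0^1\frac{\int_\Omega J_\sigma(x-y)v(y,t)\,dy}{v(x,t)}\,dt \;\ge\; \int_\Omega J_\sigma(x-y)\frac{\tilde v(y)}{\tilde v(x)}\,dy, \qquad \tilde v(x):=\exp\int_0^1\log v(x,t)\,dt,
\]
then yields $(N_\Omega+\lambda)\tilde v\le 0$ and hence $\lambda\le\lambda_p(N_\Omega)$. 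For the matching lower bound I would perturb a near-eigenfunction $\varphi$ of $N_\Omega$ satisfying $(N_\Omega+\lambda)\varphi\le 0$ by setting $v(x,t):=\varphi(x)+\tau^{-1}\psi(x,t)$ with $\psi(x,t):=\varphi(x)\int_0^t\bigl(a(x,s)-\hat a(x)\bigr)\,ds$; this $\psi$ is $1$-periodic in $t$ and its $\partial_t$ exactly cancels the $(a-\hat a)\varphi$ discrepancy produced by plugging $\varphi$ into $L_\Omega^{\tau,\mu,\sigma,m}$, so a direct computation gives $(L_\Omega^{\tau,\mu,\sigma,m}+\lambda)v\le C/\tau$ and thus $\lambda_p(L_\Omega^{\tau,\mu,\sigma,m})\ge\lambda-O(1/\tau)$; letting $\tau\to\infty$ and then $\lambda\nearrow\lambda_p(N_\Omega)$ closes (iii).

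For (ii), the natural ansatz is the WKB function $v(x,t):=\phi_t(x)\exp\bigl(-\widetilde F(t)/\tau\bigr)$, where $\phi_t$ is a positive near-eigenfunction of $N_\Omega^t$ chosen continuously in $t$, and $\widetilde F$ is the $1$-periodic primitive of $\lambda_p(N_\Omega^\cdot)-\bar\lambda$ with $\bar\lambda:=\int_0^1\lambda_p(N_\Omega^t)\,dt$. A direct computation using $N_\Omega^t\phi_t=-\lambda_p(N_\Omega^t)\phi_t$ gives $(L_\Omega^{\tau,\mu,\sigma,m}+\bar\lambda)v=-\tau(\partial_t\phi_t)\,e^{-\widetilde F/\tau}$, which is $O(\tau)$ pointwise; using $v$ as a test function in the $\lambda_p$ characterisation and the symmetric version in the $\lambda_p'$ characterisation from Theorem~\ref{th101} sandwiches $\lambda_p(L_\Omega^{\tau,\mu,\sigma,m})$ between $\bar\lambda\pm O(\tau)$, and letting $\tau\to 0^+$ yields (ii). The main technical obstacle is precisely this WKB construction: one must produce a family $t\mapsto\phi_t$ of positive near-eigenfunctions of $N_\Omega^t$ with enough smoothness in $t$ to make $\partial_t\phi_t$ well-defined and bounded uniformly on $[0,1]$, while simultaneously keeping $\min\phi_t$ bounded away from $0$ (this minimum appears in the error bound). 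When $\lambda_p(N_\Omega^t)$ is a principal eigenvalue for every $t$ this is routine from the $C^{0,1}$-regularity of $a$; in the only-generalised case one has to build the $\phi_t$'s via subsolutions from an approximating family, which forces a uniform-in-$t$ analysis of the spectral structure of $\{N_\Omega^t\}_{t\in[0,1]}$.
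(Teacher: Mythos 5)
Your proposals for (ii) and (iii) are sound, and your (iii) is actually more elementary than the paper's: the paper multiplies the eigenvalue equation by $\partial_t\varphi_{\tau_n}$, derives an $L^2$ energy estimate $\|\partial_t\varphi_{\tau_n}\|_{L^2}\to 0$, and extracts a weak limit that is $t$-independent, whereas your log--Jensen reduction $(M_\tau+\lambda)v\leq 0\Rightarrow (N_\Omega+\lambda)\tilde v\leq 0$ with $\tilde v=\exp\int_0^1\log v\,dt$ gives the uniform upper bound $\lambda_p(M_\tau)\le\lambda_p(N_\Omega)$ in one stroke, and your corrector ansatz $v=\varphi+\tau^{-1}\psi$ with $\psi=\varphi\int_0^t(a-\hat a)\,ds$ cleanly produces the matching lower bound. (You should be slightly careful with the $O(1/\tau)$ error: to convert $(L+\lambda)v\le C/\tau$ into a genuine subsolution you need to absorb the constant as $(C'/\tau)v$, which requires $v$ bounded away from zero, but this holds for $\tau$ large since $\psi$ is bounded.) Your (ii) is essentially the paper's proof in different notation: $\rho(t)=\exp\bigl(-\widetilde F(t)/\tau\bigr)$ with $\widetilde F'=\lambda_p(N_\Omega^t)-\bar\lambda$ is exactly the paper's multiplier, and the $O(\tau)$ sandwich is the same estimate the paper writes as $\tau|\partial_t v|\le\epsilon_0 v$.

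The genuine gap is in your proof of monotonicity in (i), which is precisely the nontrivial part. You assert that "a comparison of $T_1^{\tau_1}$ and $T_1^{\tau_2}$ via the log--Jensen inequality yields $\lambda_p(L_\Omega^{\tau_1,\mu,\sigma,m})\le\lambda_p(L_\Omega^{\tau_2,\mu,\sigma,m})$", but you never say what this comparison is, and I do not believe it is a straightforward application of the log--Jensen inequality you write down later (that inequality compares $M_\tau$ with its time average $N_\Omega$, i.e.\ with the $\tau\to\infty$ endpoint, and does not give an order relation between two finite values $\tau_1<\tau_2$). The paper has to work for this: after deriving $\frac{\partial\lambda_p(M_\tau)}{\partial\tau}=\frac{1}{2\tau}\bigl(K_\tau(\psi_\tau)-K_\tau(\varphi_\tau)\bigr)$ with $K_\tau(\zeta)=\int_{\mathcal C}\varphi_\tau\psi_\tau\bigl(M_\tau[\zeta]/\zeta\bigr)$, it proves a separate variational claim that $\varphi_\tau$ is a global minimiser of $K_\tau$ over $\chi_\Omega^{++}$. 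The mechanism is that $K_\tau(\zeta)-K_\tau(\varphi_\tau)$ reduces (after subtracting the vanishing Fr\'echet derivative, i.e.\ taking $\eta=\varphi_\tau\ln(\zeta/\varphi_\tau)$) to the integral of $J(x-y)\varphi_\tau(y,t)\psi_\tau(x,t)\bigl[z-1-\ln z\bigr]$ with $z=\frac{\zeta(y,t)\varphi_\tau(x,t)}{\zeta(x,t)\varphi_\tau(y,t)}$, which is nonnegative by elementary convexity. In your scheme, that sign information is exactly what is missing: your derivative identity $\frac{\partial\lambda_p}{\partial\tau}\,\iint\phi\phi^*=\iint\phi_t\phi^*$ has no manifest sign, and your rigidity argument for case (b) presupposes $\iint\phi_t\phi^*\ge 0$, which you only justify by appealing to the unexplained monotonicity. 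You should either supply the Poincar\'e-map comparison you have in mind, or, more realistically, replace it with the $K_\tau$-minimisation argument of the paper (or an equivalent, e.g.\ a Donsker--Varadhan type variational principle for $\lambda_p$).

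One further remark: your rigidity step in (b) ("identifies the vanishing of $\iint\phi_t\phi^*$ with the separable form of $a$") is not immediate from the integral being zero; the paper actually uses the pointwise nonnegativity of the integrand in the $K_\tau$-difference formula (together with $J(0)>0$) to conclude that the ratio $\frac{\varphi_{\tau_0}(x,t)}{\psi_{\tau_0}(x,t)}\cdot\frac{\psi_{\tau_0}(y,t)}{\varphi_{\tau_0}(y,t)}\equiv 1$, so that $\varphi_{\tau_0}=c(t)\psi_{\tau_0}$, and only then derives separability. Your raw integral identity does not provide that pointwise information; you would need the same (or an equivalent) pointwise-sign structure to run the rigidity argument.
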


Theorem \ref{th12} is
motivated by the recent work of Liu et al.  \cite{Liu-2019-PAMS}
for time-periodic parabolic operators;
%More precisely, we obtain the monotonicity and asymptotic behavior of the generalised principal eigenvalue $\lambda_{p}(L_{\Omega}^{\tau,\mu,\sigma,m})$ with respect to the frequency $\tau$,
See also %Hutson et al. % Mischaikow and Pol$\mathrm{\acute{a}\breve{c}}$ik in
\cite{Hutson-2001-JMB, Nadin-2009-AMPA}.
Biologically,
this reflects that in a  spatio-temporal heterogeneous environment,
%if the period of oscillation decreases, i.e.
when the temporal variability increases,
it becomes harder for a single species to persist.
Part (i) also implies that if $a$ is a time-periodic function
with period $T$, then the generalised principal eigenvalue
is a non-increasing function of $T$.
% and
%Liu, Lou, Peng and Zhou in \cite{Liu-2019-PAMS},
%Nadin \cite{Nadin-2009-AMPA}.
%investigating the corresponding eigenvalue problem for time periodic parabolic operators.

Now, we investigate the effects of the dispersal on the generalised principal eigenvalue.
On one hand, we study the dependence of the generalised principal eigenvalue $\lambda_{p}$ on the dispersal rate $\mu$. For this purpose, we consider the non-scaled operators $L_{\Omega}^{\tau,\mu,1,0}$. On the other hand, we also
intend to understand the effects of the dispersal spread and the dispersal budget on the generalised principal eigenvalue $\lambda_{p}$. The concept of the dispersal budget was introduced by Hutson et al. \cite{Hutson-2003-JMB}. They showed that the dispersal rate is characterised
by $\frac{\mu}{\sigma^{m}}$ under proper conditions.
From the biological point of view, the species can ``choose'' to disperse a few offspring over a long distance or many offspring over a short distance or some other combinations. %Here, we obtain

\begin{theorem}\label{th102}
Assume that {\rm(J)}, {\rm(H)} and {\rm(A)} %(J), (H) and (A)
hold. Then the following conclusions hold:
\begin{itemize}
\item[(i)] The function $\mu\mapsto\lambda_{p}(L_{\Omega}^{\tau,\mu,1,0})$ is continuous on $(0,\infty)$
and
%\item[(ii)]
there holds
\begin{equation*}
\lim\limits_{\mu\rightarrow0^{+}}\lambda_{p}(L_{\Omega}^{\tau,\mu,1,0})=  -\max_{\bar{\Omega}}\hat{a};
\end{equation*}
\item[(ii)] The function $\sigma\mapsto\lambda_{p}(L_{\Omega}^{\tau,\mu,\sigma,m})$
is continuous on $(0,\infty)$ and
\begin{itemize}
    \item[(a)] If $m>0$, then there holds
\begin{equation*}
\lim\limits_{\sigma\to\infty}\lambda_{p}(L_{\Omega}^{\tau,\mu,\sigma,m})=
-\underset{\bar{\Omega}}{\max}\ \hat{a};
\end{equation*}
\item[(b)] If $m=0$ and $\lim\limits_{\sigma\to\infty} h^{\sigma}(x)=c$, then there holds
\begin{equation*}
\lim\limits_{\sigma\to\infty}\lambda_{p}(L_{\Omega}^{\tau,\mu,\sigma,m})=
\mu c-\underset{\bar{\Omega}}{\max}\ \hat{a}.
\end{equation*}
\end{itemize}
\end{itemize}
\end{theorem}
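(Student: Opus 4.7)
The plan splits Theorem \ref{th102} into two tasks: continuity of $\lambda_p$ in $\mu$ and $\sigma$, which is soft; and the four asymptotic limits, which I would obtain by matching a concrete test function against a dispersal-free comparison operator.

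Continuity in $\mu$ at $\mu_0 > 0$ is a perturbation argument. Setting $\lambda_0 := \lambda_p(L_\Omega^{\tau,\mu_0,1,0})$, Theorem \ref{th101} provides $v_- \in \chi_\Omega^{++}$ with $(L_\Omega^{\tau,\mu_0,1,0} + \lambda_0 - \varepsilon)[v_-] \le 0$ and $v_+ \in \chi_\Omega^{++}$ with $(L_\Omega^{\tau,\mu_0,1,0} + \lambda_0 + \varepsilon)[v_+] \ge 0$. Because $v_\pm$ are bounded and the integral term is linear in $\mu$, $L_\Omega^{\tau,\mu,1,0}[v_\pm]$ converges uniformly to $L_\Omega^{\tau,\mu_0,1,0}[v_\pm]$ as $\mu \to \mu_0$, so the strict signs of the two inequalities persist for nearby $\mu$, yielding $|\lambda_p(L_\Omega^{\tau,\mu,1,0}) - \lambda_0| \le 2\varepsilon$. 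Continuity in $\sigma$ follows by the same argument, once one observes that $\sigma \mapsto J_\sigma$ and (for natural choices) $\sigma \mapsto h^\sigma$ are continuous on $(0,\infty)$ when applied to a fixed continuous test function.

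For the asymptotic limits, the workhorse is the test function
\begin{equation*}
\chi(x,t) := \exp\!\Bigl(\frac{1}{\tau}\int_0^t \bigl(a(x,s) - \hat a(x)\bigr)\,ds\Bigr),
\end{equation*}
which lies in $\chi_\Omega^{++}$: it is positive, $C^{0,1}$ on $\bar\Omega \times \mathbb{R}$ by continuity of $a$, and periodic in $t$ since $\int_0^1 (a - \hat a)\,ds = 0$. The identity $-\tau\chi_t = -(a - \hat a)\chi$ makes the operator act cleanly,
\begin{equation*}
L_\Omega^{\tau,\mu,\sigma,m}[\chi] = \hat a(x)\,\chi + \frac{\mu}{\sigma^m}\Bigl(\int_\Omega J_\sigma(x-y)\chi(y,t)\,dy - h^\sigma(x)\chi(x,t)\Bigr),
\end{equation*}
with bracket uniformly bounded in $(x,t,\sigma)$ since $\chi$ is bounded above and below away from $0$. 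For the lower bounds I insert $\lambda = -\max_{\bar\Omega}\hat a - \varepsilon$ in parts (i) and (ii)(a), and $\lambda = \mu c - \max_{\bar\Omega}\hat a - \varepsilon$ in (ii)(b); the non-positive main term $\hat a - \max_{\bar\Omega}\hat a - \varepsilon$ then dominates the dispersal residual provided $\mu$ is small (case (i)), $\mu/\sigma^m$ is small (case (ii)(a)), or $\sigma$ is large enough that $\int_\Omega J_\sigma(x-y)\,dy \le \sigma^{-N}\|J\|_\infty|\Omega| \to 0$ and $h^\sigma \to c$ (case (ii)(b)). In each regime $\chi$ thereby witnesses the desired lower bound on $\lambda_p$.

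For the matching upper bounds I compare $L_\Omega^{\tau,\mu,\sigma,m}$ with the reaction-only operator $M^{\tau,\mu,\sigma,m}[v] := -\tau v_t - (\mu/\sigma^m)h^\sigma(x)\,v + a(x,t)\,v$. Since $L_\Omega^{\tau,\mu,\sigma,m}[v] - M^{\tau,\mu,\sigma,m}[v] = (\mu/\sigma^m)\int_\Omega J_\sigma(x-y)v(y,t)\,dy \ge 0$ for $v > 0$, every witness of $\lambda_p'(M^{\tau,\mu,\sigma,m})$ is also a witness of $\lambda_p'(L_\Omega^{\tau,\mu,\sigma,m})$; Theorem \ref{th101} then gives $\lambda_p(L_\Omega^{\tau,\mu,\sigma,m}) \le \lambda_p(M^{\tau,\mu,\sigma,m})$. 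The right-hand side is computed exactly using $\chi$ (for the lower bound) and an integration along $t$ of any witness inequality at a maximiser of $\hat a - (\mu/\sigma^m)h^\sigma$ (for the matching upper bound), yielding $\lambda_p(M^{\tau,\mu,\sigma,m}) = -\max_{\bar\Omega}(\hat a - (\mu/\sigma^m)h^\sigma)$. Sending $\mu \to 0^+$ in (i) and $\sigma \to \infty$ in (ii)(a)--(ii)(b) produces the claimed limits. The main technical obstacle I foresee lies in (ii)(b), where the pointwise hypothesis $h^\sigma(x) \to c$ must be strong enough for $\max_{\bar\Omega}(\hat a - \mu h^\sigma) \to \max_{\bar\Omega}\hat a - \mu c$; this is automatic for the standard constructions of $h^\sigma$, or under an equicontinuity assumption on the family.
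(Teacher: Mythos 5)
Your proposal is correct and follows the same overall strategy as the paper: the key test function $\chi(x,t)=\exp\bigl(\tfrac{1}{\tau}\int_0^t(a-\hat a)\,ds\bigr)$ (the paper's $\phi$) furnishes the lower bounds, and the comparison with the dispersal-free operator $M^{\tau,\mu,\sigma,m}[v]=-\tau v_t-(\mu/\sigma^m)h^\sigma v+av$ (the paper's $H_\Omega^{\tau,\mu,\sigma,m}$) furnishes the upper bounds. Where you diverge is in two places, both to your advantage. First, for continuity in $\mu$ and $\sigma$ the paper invokes Lemma~\ref{le201} to approximate by an operator whose principal eigenvalue exists, then applies Kato's perturbation theory for isolated eigenvalues, then transfers back via the Lipschitz estimate of Lemma~\ref{le202}; your direct argument --- pick $\varepsilon$-approximate witnesses $v_\pm$ for $\lambda_p=\lambda_p'$, observe that the $\mu$- (or $\sigma$-)dependent part of the operator acting on the fixed $v_\pm$ varies continuously, and absorb the perturbation into the $\varepsilon$-slack because $v_\pm$ is bounded away from zero --- is more elementary and does not need the approximation lemma at all. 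Second, for the eigenvalue of the comparison operator $M$, the paper cites Propositions 3.4--3.5 of Rawal--Shen, whereas you derive $\lambda_p(M)=\min_{\bar\Omega}\{(\mu/\sigma^m)h^\sigma-\hat a\}$ directly by testing with $\chi$ and integrating a witness inequality along $t$ at the minimiser; this is self-contained and correct. You also correctly flag the one genuine subtlety: in (ii)(b) the hypothesis $h^\sigma(x)\to c$ needs to be upgraded to uniform convergence (or equicontinuity), since both your bound and the paper's bound actually require $\|h^\sigma-c\|_\infty\to 0$; the paper silently uses this stronger form, so your observation is warranted rather than a flaw.

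One point to tighten: when you say ``the strict signs of the two inequalities persist for nearby $\mu$,'' the inequalities are weak ($\le 0$, $\ge 0$), so it is not the signs that persist but rather the $\varepsilon v_\pm$ margin that absorbs a uniformly small perturbation, using $\inf v_\pm>0$. Your subsequent sentences make it clear you know this, but the phrasing could mislead. Likewise, in the continuity-in-$\sigma$ argument, assumption (H) alone does not guarantee any continuity of $\sigma\mapsto h^\sigma$; the paper is also silent on this, so both proofs implicitly restrict to families like $h^\sigma\equiv 1$ or $h^\sigma=\int_\Omega J_\sigma(\cdot-y)\,dy$ for which continuity in $\sigma$ is automatic. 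It would be worth stating that extra hypothesis explicitly.
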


\begin{remark}\label{re101}{\rm
For the case $m=0$, %$\lim\limits_{\sigma\to\infty} h^{\sigma}(x)=c$ may come
%out.
when the nonlocal dispersal operators take
Dirichlet boundary conditions,  $h^{\sigma}(x)\equiv1$;
%we have $\lim\limits_{\sigma\to\infty} h^{\sigma}(x)=1$;
When the nonlocal dispersal operators take
Neumann boundary conditions, i.e., $h^{\sigma}(x)=\int_{\Omega}J_{\sigma}(x-y)dy$, we have $\lim\limits_{\sigma\to\infty} h^{\sigma}(x)=0$.
This implies that
the boundary conditions play an important role in the persistence of the populations, i.e.  the large spread strategy with Neumann boundary conditions may be more advantageous
for species to persist, in comparison to Dirichlet boundary conditions \cite{shen-2019}.
%may not be the case for $m=0$.
}
\end{remark}

%-----------------------------------------------------------------

For later applications to time-periodic nonlinear KPP equations
with nonlocal dispersal,
we also investigate the time-periodic nonlocal dispersal operators with
Neumann boundary conditions. % which will be applied to the time periodic nonlocal dispersal KPP equations.
More precisely, we have
\begin{theorem}\label{th103}
Assume that {\rm(J)} and {\rm(A)} hold. If $h^{\sigma}(x)=
\int_{\Omega}J_{\sigma}(x-y)dy$, then the following conclusions hold:
\begin{itemize}
\item[(i)] There exists $\mu_{1}>0$ such that $\lambda_{p}(L_{\Omega}^{\tau,\mu,1,0})$ is the principal eigenvalue of $L_{\Omega}^{\tau,\mu,1,0}$ for all $\mu\geq\mu_{1}$. Moreover, there holds
\begin{equation*}
\lim\limits_{\mu\rightarrow\infty}\lambda_{p}(L_{\Omega}^{\tau,\mu,1,0})=
-\bar{\hat{a}},
\end{equation*}
where $\bar{\hat{a}}=\frac{1}{|\Omega|}\int_{\Omega}\hat{a}(x)dx$;
%\item[(ii)] If $a(x,t)=\alpha(x)+\beta(t)$, then $\mu\mapsto\lambda_{p}(L_{\Omega}^{\mu,1,0})$ is non-decreasing.
\item[(ii)] There exists $\sigma_{0}>0$ such that $\lambda_{p}(L_{\Omega}^{\tau,\mu,\sigma,m})$ is the principal eigenvalue of $L_{\Omega}^{\tau,\mu,\sigma,m}$ for all $0<\sigma\leq\sigma_{0}$. Moreover, if $J$ is symmetric with respect to each component and $0\leq m<2$, then there holds
\begin{equation*}
\lim\limits_{\sigma\to0^{+}}\lambda_{p}(L_{\Omega}^{\tau,\mu,\sigma,m})=
-\underset{\bar{\Omega}}{\max}\ \hat{a}.
\end{equation*}
\end{itemize}
\end{theorem}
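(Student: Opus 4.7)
The two parts of Theorem~\ref{th103} analyse $L_{\Omega}^{\tau,\mu,\sigma,m}$ in opposing degenerate regimes of the Neumann nonlocal dispersal: in part~(i) the dispersal dominates and forces spatial averaging, while in part~(ii) the dispersal contribution vanishes and the problem becomes pointwise in $x$. Throughout I invoke the equivalence $\lambda_{p}=\lambda_{p}^{'}$ of Theorem~\ref{th101} for one-sided bounds on $\lambda_{p}$, and the monotonicity $\lambda_{p}(L_{\Omega}^{\tau,\mu,\sigma,m})\leq\lambda_{p}(N_{\Omega}^{\mu,\sigma,m})$ (from Theorem~\ref{th12}(i),(iii), after mollifying $a$ in $t$ if necessary) to reduce the upper bound in part~(ii) to a self-adjoint Rayleigh quotient problem on the time-independent operator.

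\textbf{Part~(i).} With Neumann kernel, $K[v](x):=\int_{\Omega}J(x-y)(v(y)-v(x))\,dy$ is symmetric non-positive on $L^{2}(\Omega)$ with kernel $\{\text{constants}\}$ and a strictly positive spectral gap $\kappa>0$. For $\mu$ sufficiently large, a Krein-Rutman argument applied to the Poincar\'e map of $\tau u_{t}=\mu K u+a(x,t)u$ (compact from the convolution structure, and strongly positive for some iterate via $J(0)>0$) produces a simple positive principal eigenvalue of $L_{\Omega}^{\tau,\mu,1,0}$. For the asymptotic limit, let $v_{\mu}\in\chi_{\Omega}^{++}$ be the principal eigenfunction normalised by $\max v_{\mu}=1$ and set $\lambda_{\mu}:=\lambda_{p}(L_{\Omega}^{\tau,\mu,1,0})$. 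The trivial test $v\equiv 1$ in the $\lambda_{p}^{'}$ characterisation gives $|\lambda_{\mu}|\leq\|a\|_{\infty}$, and dividing the eigenvalue identity by $\mu$ yields $\|Kv_{\mu}\|_{\infty}=O(\mu^{-1})$; the spectral gap of $K$ then forces $v_{\mu}(x,t)-\tfrac{1}{|\Omega|}\int_{\Omega}v_{\mu}(y,t)\,dy\to 0$ uniformly, so along a subsequence $v_{\mu}\to\phi(t)$ uniformly with $\phi>0$ and $1$-periodic. Integrating the eigenvalue identity over $\Omega$ and using the symmetry of $J$ to kill the dispersal contribution gives
\begin{equation*}
-\tau\phi'(t)+\bar{a}(t)\phi(t)=-\lambda_{\infty}\phi(t),\qquad \bar{a}(t):=\tfrac{1}{|\Omega|}\int_{\Omega}a(x,t)\,dx,
\end{equation*}
and periodicity of $\phi$ forces $\lambda_{\infty}=-\int_{0}^{1}\bar{a}(t)\,dt=-\bar{\hat{a}}$.

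\textbf{Part~(ii).} By the componentwise symmetry of $J$ and Taylor expansion, for any smooth $\psi$ and any $x\in\bar{\Omega}$ at distance at least $\sigma$ from $\partial\Omega$,
\begin{equation*}
\tfrac{\mu}{\sigma^{m}}\!\int_{\Omega}J_{\sigma}(x-y)(\psi(y)-\psi(x))\,dy
=\mu\sigma^{2-m}D\,\Delta\psi(x)+o(\sigma^{2-m}),\qquad D=\tfrac{1}{2N}\!\int_{\mathbb{R}^{N}}|z|^{2}J(z)\,dz,
\end{equation*}
so the dispersal contribution vanishes as $\sigma\to 0^{+}$ when $0\leq m<2$. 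Existence of a principal eigenvalue for $0<\sigma\leq\sigma_{0}$ follows from the same Krein-Rutman scheme as in part~(i). For the convergence I give a matching two-sided estimate. \emph{Lower bound.} Given $\epsilon>0$, pick $x_{0}\in\bar{\Omega}$ with $\hat{a}(x_{0})=\max_{\bar{\Omega}}\hat{a}$ and build the positive supersolution
\begin{equation*}
v_{\sigma}(x,t):=\rho_{\delta}(x-x_{0})\exp\!\Bigl(\tfrac{1}{\tau}\!\int_{0}^{t}\bigl[a(x,s)-\hat{a}(x)\bigr]\,ds\Bigr)+\eta,
\end{equation*}
with a smooth positive bump $\rho_{\delta}$ of width $\delta\gg\sigma$ and a small $\eta>0$; a direct computation based on the Taylor expansion yields $(L_{\Omega}^{\tau,\mu,\sigma,m}+\lambda)v_{\sigma}\leq 0$ for $\lambda=-\max_{\bar{\Omega}}\hat{a}-\epsilon$ as soon as $\mu\sigma^{2-m}\delta^{-2}$, $\delta$ and $\eta\|a\|_{\infty}$ are each $\leq\epsilon/4$, which gives $\lambda_{p}(L_{\Omega}^{\tau,\mu,\sigma,m})\geq-\max_{\bar{\Omega}}\hat{a}-\epsilon$. \emph{Upper bound.} The comparison $\lambda_{p}(L_{\Omega}^{\tau,\mu,\sigma,m})\leq\lambda_{p}(N_{\Omega}^{\mu,\sigma,m})$ follows from Theorem~\ref{th12}(i),(iii). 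The right-hand side is the principal eigenvalue of the self-adjoint operator $N_{\Omega}^{\mu,\sigma,m}$ on $L^{2}(\Omega)$, so it admits a Rayleigh quotient characterisation; testing with a smooth bump centred at $x_{0}$ of width $\delta\to 0^{+}$ slower than $\sigma^{(2-m)/2}$ gives $\limsup_{\sigma\to 0^{+}}\lambda_{p}(N_{\Omega}^{\mu,\sigma,m})\leq-\max_{\bar{\Omega}}\hat{a}+\epsilon$, hence the matching upper bound after letting $\epsilon\to 0$.

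\textbf{Main obstacle.} The main technical challenge is the construction of the localised supersolution for the lower bound in part~(ii): three competing error terms (the dispersal contribution of order $\mu\sigma^{2-m}\delta^{-2}$, the spatial oscillation of $a-\hat{a}$ of order $\delta$ near $x_{0}$, and the strict-positivity offset $\eta$) must simultaneously be dominated by the spectral gap $\epsilon$, so the scales must be chosen in the regime $\sigma^{(2-m)/2}\ll\delta\ll\epsilon$; moreover, since $v_{\sigma}$ is only piecewise smooth at the edge of the bump, the nonlocal dispersal operator has to be estimated carefully on the ``spillover'' region where the support boundary of $\rho_{\delta}$ is within distance $\sigma$ of $x$.
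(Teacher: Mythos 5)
Your overall skeleton (spectral gap forces spatial flattening in part~(i), Taylor expansion kills the dispersal in part~(ii), one-sided bounds via $\lambda_{p}=\lambda_{p}^{'}$) matches the paper's strategy, but several steps as written would not go through.

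\textbf{Existence of the principal eigenvalue.} You claim a Krein--Rutman argument on the Poincar\'e map, invoking ``compact from the convolution structure.'' This is exactly what fails for nonlocal dispersal: the operator $u\mapsto J*u-h\,u$ is a compact perturbation of multiplication by $-h$, so the resolvent is \emph{not} compact, the solution semigroup is not compact, and the standard Krein--Rutman machinery does not apply. This is precisely why the generalised principal eigenvalue exists as a substitute and why the paper instead invokes sufficient criteria from \cite{Shen-2015-DCDS,Su-2019-submitted} together with Lemma~\ref{le203} to conclude existence for $\mu$ large (resp.\ $\sigma$ small). Your existence claim is not a shortcut; it is a gap.

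\textbf{Part~(i), asymptotics.} You normalise $\max v_{\mu}=1$ and assert that dividing the eigenvalue identity by $\mu$ ``yields $\|Kv_{\mu}\|_{\infty}=O(\mu^{-1})$,'' then feed the spectral gap to get uniform convergence $v_\mu\to\phi(t)$. But from $\mu Kv_{\mu}=\tau\partial_{t}v_{\mu}-(a+\lambda_{\mu})v_{\mu}$, the $O(\mu^{-1})$ bound requires a $\mu$-uniform sup bound on $\partial_{t}v_{\mu}$, which you never establish (and which does not follow from $\max v_{\mu}=1$ alone). The paper avoids this by working in $L^{2}$ with the normalisation $\|\varphi\|_{L^{2}}=1$: multiplying the equation by $\varphi$ and integrating gives the dispersal term a bound of order $\mu^{-1}$, the Poincar\'e-type inequality from \cite[Formula~(5.6)]{Shen-2015-DCDS} yields $\|\varphi-\bar\varphi\|_{L^2}^{2}=O(\mu^{-1})$, and then the spatially averaged ODE with an $O(\mu^{-1/2})$ forcing and the periodicity of $\bar\varphi$ produce the limit; the degenerate alternative $\bar\varphi(0)\to0$ is ruled out separately by the $L^{2}$ normalisation. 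Your sketch does not address either the $\partial_t$ bound or how to preclude degeneracy.

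\textbf{Part~(ii), lower bound.} The localised-bump test function $\rho_{\delta}(x-x_{0})\exp(\cdots)+\eta$ is unnecessarily complicated and creates the spillover difficulty you yourself flag. Because $\hat a(x)-\max_{\bar\Omega}\hat a\le 0$ holds \emph{globally}, the paper simply takes the nowhere-vanishing test function $\phi(x,t)=\phi_{0}\exp\bigl(\tfrac{1}{\tau}\int_{0}^{t}(a(x,s)-\hat a(x))\,ds\bigr)$, for which $-\tau\phi_{t}+(a-\max\hat a-\epsilon)\phi=(\hat a-\max\hat a-\epsilon)\phi\le-\epsilon\phi$ pointwise; a single Taylor expansion then shows the dispersal term is $O(\sigma^{2-m})$ uniformly, so no localisation, no cut-off edge, and no scale balance are needed. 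Your upper bound via $\lambda_{p}(L)\le\lambda_{p}(N_{\Omega}^{\mu,\sigma,m})$ (deduced from Theorem~\ref{th12}) and the Rayleigh quotient is correct in outline; the paper simply cites \cite[Theorem~C]{Rawal-2012-JDDE} and \cite[Theorem~1.2]{Su-2019-submitted} for the same content.
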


%-----------------------------------------------------------------
For the case $m=2$, it is shown in \cite{Shen-2015-JDE} that
%\begin{equation*}
$\lim\limits_{\sigma\to0^{+}}\lambda_{p}(L_{\Omega}^{\tau,\mu,\sigma,2})=
\lambda_{r}$,
%\end{equation*}
where $\lambda_{r}$ is the principal eigenvalue of the corresponding time-periodic random dispersal eigenvalue problem. %with Neumann boundary conditions.
For the case $m>2$, we conjecture that $\lim\limits_{\sigma\to0^{+}}\lambda_{p}(L_{\Omega}^{\tau,\mu,\sigma,m})=
-\bar{\hat{a}}$, which has been proved by \cite[Theorem 1.3 (iii)]{Su-2019-submitted} for the time-independent operators.

%Finally,
In the second part of this paper,
we consider the applications of previous results for the generalised
principal eigenvalue to the nonlocal dispersal equation
in spatio-temporally heterogeneous environments
\begin{equation} \label{104}
\begin{cases}
\tau u_{t}(x,t)=\frac{\mu}{\sigma^{m}}\int_{\Omega}J_{\sigma}(x-y)(u(y,t)-u(x,t))dy+
f(x,t,u(x,t)),&
~~~~~(x,t)\in \bar{\Omega}\times(0,\infty),\\
u(x,0)=u_{0}(x),&
~~~~~x\in \bar{\Omega},
\end{cases}
\end{equation}
and the time-periodic nonlocal dispersal KPP equation with Neumann boundary conditions
\begin{equation} \label{105}
\begin{cases}
\tau u_{t}(x,t)=\frac{\mu}{\sigma^{m}}\int_{\Omega}J_{\sigma}(x-y)(u(y,t)-u(x,t))dy+
f(x,t,u(x,t)),&~~~~~ (x,t)\in \bar{\Omega}\times\mathbb{R},\\
u(x,t+1)=u(x,t),&~~~~~ (x,t)\in \bar{\Omega}\times\mathbb{R},
\end{cases}
\end{equation}
where $u(x,t)$ represents the population density at location $x$ and time $t$. %The nonlocal diffusion is described by
%\begin{equation*}
%\frac{\mu}{\sigma^{m}}\int_{\Omega}J_{\sigma}(x-y)(u(y,t)-u(x,t))dy.
%\end{equation*}
Since we only integrate over $\Omega$, we assume that diffusion takes place only in $\Omega$. The individuals may not enter or leave the domain, which is called \textbf{nonlocal Neumann boundary condition}; See \cite{Andreu-2010-AMS,Rossi-2007-JDE}.
The nonlinearity $f(x,t,u)$ satisfies the following assumptions:
\begin{itemize}
\item[($F$)] $f: \bar{\Omega}\times\mathbb{R}\times\mathbb{R}\rightarrow \mathbb{R}$ is of KPP type and satisfies:\\
(1) $f(\cdot,t,u)\in C^{1}(\bar{\Omega})$, $f(x,\cdot,u)\in C(\mathbb{R})$  and  $f(x,t,\cdot)\in C^{1}(\mathbb{R})$;\\
(2) $f(x,t,0)=0$ \ for all $(x,t)\in \bar{\Omega}\times\mathbb{R}$ and
\begin{equation*}
f(x,t+1,u)=f(x,t,u),~~~~~~~\forall (x,t,u)\in \bar{\Omega}\times\mathbb{R}\times\mathbb{R};
\end{equation*}
(3) For all \ $(x,t)\in \bar{\Omega}\times\mathbb{R}$, the function $u\mapsto f(x,t,u)/u$ is decreasing on $(0,\infty)$;\\
(4) There exists $M>0$ such that
\begin{equation*}
f(x,t,u)\leq 0,~~~~~~~\forall~ (x,t,u)\in \bar{\Omega}\times\mathbb{R}\times[M,\infty).
\end{equation*}
\end{itemize}
%A typical example of such a nonlinearity is given by $f(x,t,u):=u(a(x,t)-u)$ with $a\in C_{T}(\bar{\Omega}\times\mathbb{R})\cap C^{1,0}(\bar{\Omega}\times\mathbb{R})$.
From now on, we set
\begin{equation*}
a(x,t)=f_{u}(x,t,0),\ \ \ \ \ (x,t)\in\bar{\Omega}\times\mathbb{R}.
\end{equation*}
Then, $L_{\Omega}^{\tau, \mu,\sigma,m}$, defined in \eqref{101}, is the linear operator associated to the linearization of \eqref{105} at $u\equiv0$.

Nonlocal dispersal evolution equations of the form \eqref{104} have
attracted a lot of attentions in recent years; See \cite{Rawal-2012-JDDE,Shen-2015-JDE,Shen-2017,shen-2019,Sun-2017-JDE}
and references therein. %It should be mentioned that
The case
$f(x,t,u)=f(x,u)$ in equations \eqref{104} has been well studied; See \cite{Bates-2007-JMAA,Berestycki-2016-JMB,Rossi-2006-JMPA,
Coville-2010-JDE,Coville-2015-DCDS,Kao-2010-DCDS,Shen-2010-JDE,Shen-2012-PAMS,
Su-2018-submitted,Su-2019-submitted,Sun-2015-DCDS,Yang-2016-DCDSA}.
We first recall the following
results of the existence and non-existence of positive time-periodic solutions to
\eqref{105} by Rawal and Shen  \cite{Rawal-2012-JDDE} and Shen and Vo  \cite{shen-2019}:
\begin{lemma}\label{th104}
Assume that {\rm(J)} and {\rm(F)} hold. Let $u(x,t;u_{0})$ be a solution of \eqref{104} with initial data $u_{0} \in C(\bar{\Omega})$, which is non-negative and
not identically zero. The following statements hold:
\begin{itemize}
\item[(i)] If $\lambda_{p}(L_{\Omega}^{\tau,\mu,\sigma,m})<0$, then \eqref{105} admits a unique solution $u^{\ast}$ in
    $\chi_{\Omega}^{++}$ and there holds
\begin{equation*}
||u(\cdot,t;u_{0})-u^{*}(\cdot,t)||_{\infty}\rightarrow 0 \ \ as \ \ t\rightarrow \infty,
\end{equation*}
~~~~~~~~where $||\cdot||_{\infty}$ is the sup norm on $C(\bar{\Omega})$;
\item[(ii)] If $\lambda_{p}(L_{\Omega}^{\tau,\mu,\sigma,m})>0$, then
\eqref{105} admits no solution in
    $\chi_{\Omega}^{+}\setminus\{0\}$ and there holds
\begin{equation*}
||u(\cdot,t;u_{0})||_{\infty}\rightarrow 0 \ \ as \ \ t\rightarrow \infty.
\end{equation*}
%\item[(iii)] If $\lambda_{1}=0$ is the principal eigenvalue, then the equation (\ref{105}) admits no solution in $\chi_{\Omega}^{+}\setminus\{0\}$.
\end{itemize}
\end{lemma}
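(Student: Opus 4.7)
The plan is to derive both parts from the principal-eigenvalue characterisation in Theorem~\ref{th101} combined with the classical sub/supersolution method for KPP equations. The crucial structural fact from (F)(3) is that $f(x,t,u)\le a(x,t)u$ for all $u>0$, with $f(x,t,u)/u\to a(x,t)$ as $u\to 0^{+}$, so the linearisation of \eqref{105} at $u\equiv 0$ is precisely $L_{\Omega}^{\tau,\mu,\sigma,m}$, whose spectral picture is controlled by Theorem~\ref{th101}.

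For part (i), assume $\lambda_{p}(L_{\Omega}^{\tau,\mu,\sigma,m})<0$. Using the defining inequality of $\lambda_{p}$, pick $\phi\in\chi_{\Omega}^{++}$ and $\delta>0$ with $L_{\Omega}^{\tau,\mu,\sigma,m}\phi\ge \delta\phi$. By continuity of $f(x,t,u)/u$ at $u=0$, the scaled function $\underline{u}:=\epsilon\phi$ is a strict subsolution of \eqref{105} for all sufficiently small $\epsilon>0$, while the constant $\overline{u}:=M$ from (F)(4) is a supersolution under the nonlocal Neumann convention $h^{\sigma}(x)=\int_{\Omega}J_{\sigma}(x-y)dy$ (the nonlocal term vanishes on constants). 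The Poincar\'{e} map $T:u_{0}\mapsto u(\cdot,1;u_{0})$ is order-preserving and strongly positive, so iterating $T$ from $\underline{u}$ and from $\overline{u}$ produces monotone sequences whose limits are $1$-periodic solutions $u^{*}_{-}\le u^{*}_{+}$ of \eqref{105}. The KPP monotonicity of $u\mapsto f(x,t,u)/u$ then forces $u^{*}_{-}=u^{*}_{+}=:u^{*}$ via a standard ratio-at-a-maximum argument. For arbitrary non-trivial non-negative $u_{0}$, strong positivity of the nonlocal flow (ensured by $J(0)>0$) gives $\epsilon\phi\le u(\cdot,t_{0};u_{0})\le M$ for some $t_{0}>0$ after possibly enlarging $M$, and then comparison with the squeezing monotone orbits yields the uniform convergence $\|u(\cdot,t;u_{0})-u^{*}(\cdot,t)\|_{\infty}\to 0$.

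For part (ii), assume $\lambda_{p}(L_{\Omega}^{\tau,\mu,\sigma,m})>0$. If some $u^{*}\in\chi_{\Omega}^{++}$ solved \eqref{105}, then the strict inequality $f(x,t,u^{*})<a(x,t)u^{*}$ would force $L_{\Omega}^{\tau,\mu,\sigma,m}u^{*}>0$, and hence $\lambda_{p}'\le 0$; but Theorem~\ref{th101} gives $\lambda_{p}=\lambda_{p}'>0$, a contradiction. For the decay, fix $0<\lambda<\lambda_{p}$ and use the definition of $\lambda_{p}$ to produce $\psi\in\chi_{\Omega}^{++}$ with $(L_{\Omega}^{\tau,\mu,\sigma,m}+\lambda)\psi\le 0$. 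A direct computation shows that $\overline{u}(x,t):=Ce^{-\lambda t/\tau}\psi(x,t)$ is a supersolution of \eqref{104}: the exponential factor exactly absorbs the $\lambda\psi$ contribution, and $f\le au$ preserves the sign. Choosing $C$ large enough that $\overline{u}(\cdot,0)\ge u_{0}$ and invoking comparison yields $\|u(\cdot,t;u_{0})\|_{\infty}\le Ce^{-\lambda t/\tau}\|\psi\|_{\infty}\to 0$.

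The main technical obstacle is that $\lambda_{p}$ need not be attained by a genuine eigenfunction in $\chi_{\Omega}$, because the nonlocal operator lacks compactness and its continuous spectrum may reach down to $\lambda_{p}$. Theorem~\ref{th101} is exactly what bridges this gap: the equivalence $\lambda_{p}=\lambda_{p}'$ together with the spectral characterisation provide admissible test functions in $\chi_{\Omega}^{++}$ realising the reversed inequalities needed on both sides of the sub/supersolution construction, so no genuine eigenpair is required.
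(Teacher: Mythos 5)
The paper does not prove Lemma~\ref{th104} itself: it is stated as a recalled result, with the proof attributed to Rawal--Shen \cite{Rawal-2012-JDDE} and Shen--Vo \cite{shen-2019}. So there is no ``paper's own proof'' to compare against; I will evaluate your sketch directly.

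Your overall strategy is the natural one and matches, in outline, what the cited references do. The supersolution construction in part~(ii) is correct: taking $\psi\in\chi_{\Omega}^{++}$ with $(L_{\Omega}^{\tau,\mu,\sigma,m}+\lambda)[\psi]\le 0$ for $0<\lambda<\lambda_{p}$, one checks directly that $\bar u:=Ce^{-\lambda t/\tau}\psi$ dominates the flow using $f(x,t,s)\le a(x,t)s$, and the non-existence argument via $L[u^{*}]>0\Rightarrow\lambda_{p}'\le 0$ is also fine. Two things, however, deserve attention.

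First, a small but worth-correcting imprecision: when you write ``Using the defining inequality of $\lambda_{p}$, pick $\phi\in\chi_{\Omega}^{++}$ and $\delta>0$ with $L_{\Omega}^{\tau,\mu,\sigma,m}\phi\ge\delta\phi$,'' the inequality you need in fact comes from the $\lambda_{p}'$ side: the definition of $\lambda_{p}$ supplies test functions with $L\phi\le -\lambda\phi$ (an \emph{upper} bound on $L\phi$), while the subsolution construction needs the \emph{lower} bound $L\phi\ge\delta\phi$, which is exactly what a $\lambda_{p}'$-test function with $\lambda_{p}'<\lambda<0$ gives. You do acknowledge this in your closing remark about $\lambda_{p}=\lambda_{p}'$, but the first sentence as written conflates the two.

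Second, the genuine gap is in the monotone iteration for part~(i). You write that the Poincar\'{e} map $T$ is ``order-preserving and strongly positive, so iterating $T$ from $\underline{u}$ and from $\overline{u}$ produces monotone sequences whose limits are $1$-periodic solutions.'' For local parabolic problems this is justified because the time-one map is compact and one can pass to a limit in $C(\bar\Omega)$; for the nonlocal operator, $T$ has the form $e^{-\frac{\mu}{\sigma^{m}}h^{\sigma}(x)}\,\mathrm{Id}+(\text{compact})$ and is \emph{not} compact, so the pointwise limit of the monotone orbit need not a priori lie in $C(\bar\Omega)$ nor be a fixed point obtained by continuity of $T$. This is precisely where \cite{Rawal-2012-JDDE} and \cite{shen-2019} work harder: they either establish regularity of the limit by hand, or they bypass the iteration entirely via monotone-dynamical-systems arguments (strong sublinearity together with the order structure, or a part-metric contraction), which do not require compactness. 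As written, your proposal takes this step for granted. The uniqueness argument by a ratio-at-a-maximum comparison and the squeezing of a general orbit between the two monotone orbits are both standard and unproblematic once the existence/regularity of the limiting periodic solution is secured.
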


Now, we discuss the effects of the frequency on the persistence of populations. The following conclusion is a direct corollary of Theorem \ref{th12} and Lemma \ref{th104}.

\begin{corollary}\label{17}
Assume that {\rm(J)} and {\rm(F)} hold. Then the following statements hold:
\begin{itemize}
\item[(i)] If $\int_{0}^{1}\lambda_{p}(N_{\Omega}^{t})dt>0$, then \eqref{105} admits no solution in $\chi_{\Omega}^{+}\setminus\{0\}$ and zero solution is globally asymptotically stable for all $\tau\in(0,\infty)$;
\item[(ii)] If $\int_{0}^{1}\lambda_{p}(N_{\Omega}^{t})dt<0$, $\lambda_{p}(N_{\Omega})>0$ and $\lambda_{p}(L_{\Omega}^{\tau,\mu,\sigma,m})$ is a principal eigenvalue of the operator $L_{\Omega}^{\tau,\mu,\sigma,m}$, then there is a constant $\tau^{*}>0$ such that
    \begin{itemize}
    \item[(a)] If $\tau<\tau^{*}$, then \eqref{105} admits a unique solution $u_{\tau}^{\ast}\in \chi_{\Omega}^{++}$ that is globally asymptotically stable.
    \item[(b)] If $\tau>\tau^{*}$, then \eqref{105} admits no solution in $\chi_{\Omega}^{+}\setminus\{0\}$ and zero solution is globally asymptotically stable;
    \end{itemize}
\item[(iii)] If $\lambda_{p}(N_{\Omega})<0$, then \eqref{105} admits a unique solution $u_{\tau}^{\ast}\in \chi_{\Omega}^{++}$ that is globally asymptotically stable for all $\tau\in(0,\infty)$.
\end{itemize}
\end{corollary}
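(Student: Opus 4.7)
The plan is to deduce Corollary~\ref{17} as a direct corollary of Theorem~\ref{th12} combined with the dichotomy in Lemma~\ref{th104}. Because Theorem~\ref{th12}(i) gives that $\tau\mapsto\lambda_{p}(L_{\Omega}^{\tau,\mu,\sigma,m})$ is non-decreasing and continuous on $(0,\infty)$, while parts (ii) and (iii) identify its limits at $0^{+}$ and $\infty$ as $\int_{0}^{1}\lambda_{p}(N_{\Omega}^{t})\,dt$ and $\lambda_{p}(N_{\Omega})$ respectively, the entire qualitative behaviour of $\lambda_{p}(L_{\Omega}^{\tau,\mu,\sigma,m})$ (and hence, via Lemma~\ref{th104}, of \eqref{105}) is controlled by the signs of these two limits.

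For part (i), I would observe that monotonicity together with Theorem~\ref{th12}(ii) gives
\[
\lambda_{p}(L_{\Omega}^{\tau,\mu,\sigma,m})\ \geq\ \lim_{\tau'\to0^{+}}\lambda_{p}(L_{\Omega}^{\tau',\mu,\sigma,m})=\int_{0}^{1}\lambda_{p}(N_{\Omega}^{t})\,dt\ >\ 0
\]
for every $\tau\in(0,\infty)$, and then Lemma~\ref{th104}(ii) immediately produces nonexistence of positive periodic solutions and global attraction of the trivial state. Part (iii) is symmetric: using Theorem~\ref{th12}(iii), monotonicity forces $\lambda_{p}(L_{\Omega}^{\tau,\mu,\sigma,m})\leq\lambda_{p}(N_{\Omega})<0$ for all $\tau>0$, after which Lemma~\ref{th104}(i) delivers the unique globally attracting $u_{\tau}^{*}\in\chi_{\Omega}^{++}$.

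For part (ii), the plan is to exploit the sign change $\int_{0}^{1}\lambda_{p}(N_{\Omega}^{t})\,dt<0<\lambda_{p}(N_{\Omega})$ together with continuity to locate a threshold. Since $\lambda_{p}$ is assumed to be a principal eigenvalue, Theorem~\ref{th12}(i) provides the dichotomy (a)--(b). The sign change between the two limits rules out case (a) (otherwise $\lambda_{p}$ would be constant in $\tau$), so case (b) applies and $\partial_{\tau}\lambda_{p}>0$ strictly. Continuity plus strict monotonicity then yield a unique $\tau^{*}\in(0,\infty)$ with $\lambda_{p}(L_{\Omega}^{\tau^{*},\mu,\sigma,m})=0$, $\lambda_{p}(L_{\Omega}^{\tau,\mu,\sigma,m})<0$ for $\tau<\tau^{*}$, and $\lambda_{p}(L_{\Omega}^{\tau,\mu,\sigma,m})>0$ for $\tau>\tau^{*}$. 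Invoking Lemma~\ref{th104}(i) on $(0,\tau^{*})$ and Lemma~\ref{th104}(ii) on $(\tau^{*},\infty)$ yields the two stability alternatives (a) and (b).

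The corollary is essentially a bookkeeping consequence of the two main inputs, so I do not anticipate a substantial obstacle; the only subtle point is confirming, in part (ii), that the principal-eigenvalue hypothesis places us in case (b) of Theorem~\ref{th12}(i) rather than case (a), which is exactly what guarantees a \emph{single} threshold $\tau^{*}$ rather than a possibly degenerate interval of zeros.
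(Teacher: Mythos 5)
Your proposal is correct and follows exactly the route the paper intends: the paper offers no explicit proof, stating only that the corollary is a direct consequence of Theorem~\ref{th12} and Lemma~\ref{th104}, which is precisely the combination of monotonicity, the two asymptotic limits, the continuity/strict-monotonicity dichotomy, and the sign criterion from the lemma that you spell out. Your observation that the principal-eigenvalue hypothesis in part (ii) is what forces case (b) of Theorem~\ref{th12}(i) and hence a genuine single threshold $\tau^{*}$, rather than a degenerate interval where $\lambda_{p}=0$, is exactly the right subtlety to flag.
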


In the spatially and temporally varying environment,
%of resources,
Corollary \ref{17} ($ii$) suggests that increasing the %change
frequency of oscillations in the resources %goes against
may be disadvantageous to the persistence of populations.
% if the species take the nonlocal dispersal.
It should be pointed out that the condition of Corollary \ref{17} ($i$)-($iii$) may be satisfied respectively; See Theorem \ref{pro301} %in Section 3.1
for more details.

We turn to study the effects of the dispersal rate $\mu$ on the
persistence of populations. The existence and asymptotic behaviors of positive time-periodic solutions associated to \eqref{105} in the non-scaled case with $m=0$ and $\sigma=1$
are obtained as $\mu$ tends to zero or infinity.
\begin{theorem}\label{th105}
Assume that {\rm(J)} and {\rm(F)} hold. Then the following statements hold:
\begin{itemize}
\item[(i)] If $\max_{\bar{\Omega}}\hat{a}>0$, then there exists $\mu_{0}>0$ such that
\eqref{105} admits a unique solution $u_{\mu}^{\ast}\in \chi_{\Omega}^{++}$ that is globally asymptotically stable for all $\mu\in(0,\mu_{0})$. Moreover, if $\min_{\bar{\Omega}}\hat{a}>0$, then
\begin{equation*}
\underset{\mu \rightarrow 0^{+} }{\lim }\ u_{\mu}^{\ast}(x,t)=v^{\ast}(x,t) \ \ \text{uniformly in} \ \ (x,t)\in \bar{\Omega}\times\mathbb{R},
\end{equation*}
where $v^{\ast}(x,t)$ is the unique positive 1-periodic solution of the equation
$\tau v_{t}=f(x,t,v)$ for every $x\in \bar{\Omega}$.
\item[(ii)] If $\bar{\hat{a}}>0$, then there exists $\mu_{1}>0$ such that
\eqref{105} admits a unique solution $u_{\mu}^{\ast}\in \chi_{\Omega}^{++}$ that is globally asymptotically stable for all $\mu\in(\mu_{1},\infty)$.
Moreover,
\begin{equation*}
\underset{\mu \rightarrow \infty }{\lim }\ u_{\mu}^{\ast}(x,t)=v^{\ast}(t) \ \ \text{uniformly in} \ \ (x,t)\in \bar{\Omega}\times\mathbb{R},
\end{equation*}
where $v^{\ast}(t)$ is the unique positive 1-periodic solution of the  equation
\begin{equation}\label{106}
\tau v_{t}(t)=\frac{1}{|\Omega|}\int_{\Omega}f(x,t,v(t))dx.
\end{equation}

\end{itemize}
\end{theorem}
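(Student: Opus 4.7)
The plan is to treat each part in two steps: first, use the eigenvalue asymptotics of Theorems \ref{th102} and \ref{th103} together with the continuity of $\mu\mapsto\lambda_{p}$ to exhibit a $\mu$-regime in which $\lambda_{p}(L_{\Omega}^{\tau,\mu,1,0})<0$, so that Lemma \ref{th104}(i) delivers the positive $1$-periodic solution $u_{\mu}^{\ast}$, its uniqueness, and its global asymptotic stability; second, pass to the limit in \eqref{105} to identify $u_{\mu}^{\ast}$ as $\mu\to 0^{+}$ or $\mu\to\infty$. Throughout, (F)(4) supplies the uniform upper bound $u_{\mu}^{\ast}\le M$.

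For part (i), the hypothesis $\max_{\bar\Omega}\hat a>0$ combined with Theorem \ref{th102}(i) yields $\lambda_{p}\to-\max_{\bar\Omega}\hat a<0$ as $\mu\to 0^{+}$, giving existence on some interval $(0,\mu_{0})$. Under the stronger assumption $\min_{\bar\Omega}\hat a>0$, classical scalar KPP theory applied pointwise in $x$ produces a unique positive $1$-periodic solution $v^{\ast}(x,t)$ of $\tau v_{t}=f(x,t,v)$, continuous in $x$ with $v^{\ast}\ge c>0$. I would then build the multiplicative sub-/super-solution pair $(1\pm\delta)v^{\ast}$: substituting into $\tau u_{t}-\mu(Ju-h^{\sigma}u)-f(x,t,u)$ gives
\begin{equation*}
(1\pm\delta)v^{\ast}\left[\frac{f(x,t,v^{\ast})}{v^{\ast}}-\frac{f(x,t,(1\pm\delta)v^{\ast})}{(1\pm\delta)v^{\ast}}\right]-(1\pm\delta)\mu\bigl(Jv^{\ast}-h^{\sigma}v^{\ast}\bigr),
\end{equation*}
whose bracketed factor has definite sign and is bounded away from zero uniformly in $(x,t)$ by the strict KPP monotonicity of $u\mapsto f(x,t,u)/u$ together with $v^{\ast}\ge c>0$; the dispersion piece is $O(\mu)$, so for $\mu$ small (depending on $\delta$) the expression has the required sign. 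Comparison and uniqueness of the positive periodic solution then sandwich $(1-\delta)v^{\ast}\le u_{\mu}^{\ast}\le(1+\delta)v^{\ast}$, and $\delta\downarrow 0$ yields uniform convergence.

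For part (ii), Theorem \ref{th103}(i) applies because $h^{\sigma}(x)=\int_{\Omega}J_{\sigma}(x-y)dy$, giving $\lambda_{p}\to-\bar{\hat a}<0$ as $\mu\to\infty$; hence existence for $\mu$ large. To identify the limit, set $\bar u_{\mu}(t)=|\Omega|^{-1}\int_{\Omega}u_{\mu}^{\ast}(x,t)dx$ and $w_{\mu}=u_{\mu}^{\ast}-\bar u_{\mu}$. Fubini with the symmetry of $J$ and the Neumann identity $h^{\sigma}(x)=\int_{\Omega}J_{\sigma}(x-y)dy$ gives $\int_{\Omega}(Ju_{\mu}^{\ast}-h^{\sigma}u_{\mu}^{\ast})dx=0$, so integrating \eqref{105} in $x$ produces
\begin{equation*}
\tau\bar u_{\mu}'(t)=\frac{1}{|\Omega|}\int_{\Omega}f(x,t,u_{\mu}^{\ast}(x,t))dx.
\end{equation*}
Multiplying the equation for $u_{\mu}^{\ast}$ by $w_{\mu}$, integrating over $\Omega\times(0,1)$, and exploiting $1$-periodicity together with the dissipation identity
\begin{equation*}
-\int_{\Omega}(Jw-h^{\sigma}w)w\,dx=\frac{1}{2}\int_{\Omega}\!\int_{\Omega}J_{\sigma}(x-y)\bigl(w(x)-w(y)\bigr)^{2}dy\,dx
\end{equation*}
and a nonlocal Poincar\'e inequality for mean-zero functions on the connected domain $\Omega$ (ensured by $J(0)>0$) yields $\|w_{\mu}\|_{L^{2}(\Omega\times(0,1))}^{2}=O(1/\mu)$. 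Since $|\bar u_{\mu}'|$ is uniformly bounded by (F) and the $L^{\infty}$ bound on $u_{\mu}^{\ast}$, Arzel\`a--Ascoli extracts a uniformly convergent subsequence $\bar u_{\mu}\to\bar v$; passing to the limit in the averaged ODE and using uniqueness of the positive $1$-periodic solution of \eqref{106} identifies $\bar v=v^{\ast}$. To upgrade the $L^{2}$ smallness of $w_{\mu}$ to uniform convergence, I would apply the nonlocal maximum principle to the extrema $M_{\mu}(t)=\max_{x}w_{\mu}$ and $m_{\mu}(t)=\min_{x}w_{\mu}$: these are equicontinuous in $t$ (their pointwise derivatives are controlled by the bounded source $f(x,t,u_{\mu}^{\ast})-|\Omega|^{-1}\int f$), which together with the $L^{2}$ decay rules out any positive limit.

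The main technical difficulty in both parts is the total lack of spatial smoothing in the nonlocal equation, which precludes any direct gradient estimate. Part (i) sidesteps this entirely thanks to the multiplicative KPP sandwich, whose validity rests only on the strict monotonicity of $u\mapsto f/u$ and the positivity of $v^{\ast}$. Part (ii) is more delicate: $\partial_{t}u_{\mu}^{\ast}$ is not uniformly bounded as $\mu\to\infty$, so the passage from the clean $L^{2}$ energy bound to uniform convergence truly relies on the spectral-gap structure of the Neumann nonlocal operator (through the nonlocal Poincar\'e inequality) and on the one-sided control afforded by the nonlocal maximum principle on the extrema of $w_{\mu}$.
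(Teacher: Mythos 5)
Your proposal tracks the paper's proof closely in both parts, so let me focus on where it diverges and where it has problems.

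For part~(i) the approach is essentially identical: after observing $\lambda_{p}(L_{\Omega}^{\tau,\mu,1,0})\to-\max_{\bar\Omega}\hat a<0$ and invoking Lemma~\ref{th104}, the paper also uses the multiplicative sandwich: it shows $v=(1-\delta)v^{*}$ is a subsolution for small $\mu$ (with the bracketed quotient term strictly positive by (F)(3)), then compares $v$ with $u_{\mu}^{*}$ via the sliding argument $\alpha_{*}=\inf\{\alpha>0:v\le\alpha u_{\mu}^{*}\}$ and a contradiction at the touching point. This is the same content as your ``comparison and uniqueness of the positive periodic solution'' step, so part~(i) is fine.

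For part~(ii), the energy estimate, nonlocal Poincar\'e inequality, and averaging over $\Omega$ to produce the limiting ODE all match the paper's computation. Two concrete issues, however. First, you never rule out the degenerate case $v^{*}\equiv 0$: uniqueness of the positive periodic solution of \eqref{106} does not prevent the limit $\bar v$ from being the zero solution. The paper treats this separately by dividing \eqref{105} by $u_{\mu}^{*}$, integrating over $\Omega\times(0,1)$, using the symmetry of $J$ to obtain $\int_{0}^{1}\int_{\Omega}f(x,t,u_{\mu}^{*})/u_{\mu}^{*}\,dx\,dt\le 0$, and then deriving a contradiction with $\bar{\hat a}>0$ from (F)(3) if $u_{\mu}^{*}\to 0$. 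Second, and more substantively, your proposed upgrade from $L^{2}$ to $L^{\infty}$ is flawed as stated. At a maximiser $x_{0}(t)$ of $w_{\mu}(\cdot,t)$, the nonlocal term $\mu\int_{\Omega}J(x_{0}-y)(w_{\mu}(y,t)-w_{\mu}(x_{0},t))\,dy$ is nonpositive but can be of order $-O(\mu)$; hence only the one-sided Dini bound $D^{+}M_{\mu}(t)\le C/\tau$ (and $D^{-}m_{\mu}(t)\ge -C/\tau$) is available, not equicontinuity. The maximum can therefore collapse arbitrarily fast and the family $\{M_{\mu}\}$ is not uniformly Lipschitz. The one-sided control still implies that a large value of $M_{\mu}$ persists backward in time on an interval of $\mu$-independent length, but this concerns only the pointwise maximum over $x$ and is not in direct contradiction with smallness of $\int_{0}^{1}\int_{\Omega}w_{\mu}^{2}$: with no spatial modulus of continuity, a tall thin spike contributes negligibly to the $L^{2}$ norm. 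Thus your claimed mechanism does not close the $L^{2}\to L^{\infty}$ gap. To be fair, the paper's own proof is terse here and effectively asserts uniform convergence after establishing the $L^{2}$ bound; a complete proof would seem to require an $L^{\infty}$-ergodicity estimate for the Neumann nonlocal semigroup restricted to mean-zero functions (or an alternative argument supplying a spatial modulus), which neither your proposal nor the paper spells out.
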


We see from Theorem \ref{th105} that the populations with small dispersal rate
can persist while the populations with large dispersal rate die out, provided that $\bar{\hat{a}}<0<\max_{\bar{\Omega}}\hat{a}$. This shows that the small dispersal rates are
better dispersal strategies than the larger ones in proper situations.

Now, we are interested in the effects of the dispersal spread and the
dispersal budget on the persistence of populations.
We establish the existence, uniqueness and stability of positive time-periodic solutions to  \eqref{105} when $\sigma $ is sufficiently small or large. Furthermore, we analyse
the asymptotic limits of the positive time-periodic solutions as $\sigma$ tends to zero or infinity. As
in \cite{Hutson-2003-JMB,shen-2019}, these asymptotics for $\sigma \ll 1$ or $\sigma \gg 1$ represent two completely different dispersal strategies:
The limit $\sigma\rightarrow 0^{+}$ can be associated to a strategy of dispersing many offspring on a short range, while the limit $\sigma \rightarrow +\infty $ corresponds to a strategy that disperses a few offspring over a long distance. More precisely, we obtain
\begin{theorem}\label{th106}
Assume that {\rm(J)} and {\rm(F)} hold. Then the following statements hold:
\begin{itemize}
\item[(i)] Let $m\geq0$. If $\max_{\bar{\Omega}}\hat{a}>0$, then there exists $\sigma_{1}>0$ such that \eqref{105} admits a unique solution $u_{\sigma}^{\ast}\in \chi_{\Omega}^{++}$ that is globally asymptotically stable for all $\sigma\in(\sigma_{1},\infty)$. Moreover, if $\min_{\bar{\Omega}}\hat{a}>0$, then
\begin{equation*}
\underset{\sigma \rightarrow \infty }{\lim }\ u_{\sigma}^{\ast}(x,t)=v^{\ast}(x,t) \ \ \text{uniformly in} \ \ (x,t)\in \bar{\Omega}\times\mathbb{R},
\end{equation*}
where $v^{\ast}(x,t)$ is the unique positive 1-periodic solution of
the equation
$\tau v_{t}=f(x,t,v)$ for every $x\in \bar{\Omega}$;
\item[(ii)] Let $0\leq m<2$. If $J$ is symmetric with respect to each component and $\max_{\bar{\Omega}}\hat{a}>0$, then there exists $\sigma_{0}>0$ such that \eqref{105} admits a unique solution $u_{\sigma}^{\ast}\in \chi_{\Omega}^{++}$ that is globally asymptotically stable for all $\sigma\in(0,\sigma_{0})$. Moreover, if $\min_{\bar{\Omega}}\hat{a}>0$, then
\begin{equation*}
\underset{\sigma \rightarrow 0^{+} }{\lim }\ u_{\sigma}^{\ast}(x,t)=v^{\ast}(x,t) \ \ \text{uniformly in} \ \ (x,t)\in \bar{\Omega}\times\mathbb{R},
\end{equation*}
where $v^{\ast}(x,t)$ is the same as in {\rm(i)}.
\end{itemize}
\end{theorem}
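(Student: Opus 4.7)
The plan is to split the proof of each of parts~(i) and~(ii) into two tasks: (A)~the existence, uniqueness, and global asymptotic stability of $u_\sigma^*$, and (B)~the uniform convergence $u_\sigma^*\to v^*$ on $\bar\Omega\times\mathbb{R}$. Task~(A) will follow from combining the eigenvalue asymptotics of Theorems~\ref{th102} and~\ref{th103} with the abstract dichotomy in Lemma~\ref{th104}, while task~(B) will be handled by a comparison argument built from multiplicative perturbations of the ODE solution $v^*$.

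For~(A), note that equation~\eqref{105} corresponds to the linear operator $L_\Omega^{\tau,\mu,\sigma,m}$ with Neumann weight $h^\sigma(x)=\int_\Omega J_\sigma(x-y)\,dy$. In part~(i) with $m>0$, I apply Theorem~\ref{th102}(ii)(a) directly, and for $m=0$ I apply Theorem~\ref{th102}(ii)(b) after observing that the substitution $z=(x-y)/\sigma$ gives $h^\sigma(x)=\int_{(x-\Omega)/\sigma}J(z)\,dz\to 0$ as $\sigma\to\infty$, so the constant $c$ equals $0$. In either subcase this produces $\lim_{\sigma\to\infty}\lambda_p(L_\Omega^{\tau,\mu,\sigma,m})=-\max_{\bar\Omega}\hat a<0$ under the hypothesis $\max_{\bar\Omega}\hat a>0$. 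In part~(ii), Theorem~\ref{th103}(ii) immediately yields $\lim_{\sigma\to 0^+}\lambda_p=-\max_{\bar\Omega}\hat a<0$. Combining this with the continuity of $\sigma\mapsto\lambda_p$ recorded in Theorems~\ref{th102}(ii) and~\ref{th103}(ii) produces the threshold $\sigma_1$ (resp.\ $\sigma_0$), and then Lemma~\ref{th104}(i) concludes task~(A).

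For~(B), under $\min_{\bar\Omega}\hat a>0$ I first note that for each fixed $x\in\bar\Omega$ the scalar $1$-periodic ODE $\tau v_t=f(x,t,v)$ falls into the classical periodic KPP framework: the Floquet exponent of the linearisation at $0$ equals $\hat a(x)>0$ so $0$ is unstable, and the strict monotonicity in (F3) yields a unique globally asymptotically stable positive $1$-periodic solution $v^*(x,\cdot)$. Continuous dependence on $x$ together with $f\in C^1$ in $x$ gives $v^*\in C(\bar\Omega\times\mathbb{R})$ with $\inf v^*>0$. For each $\varepsilon>0$, I test $(1\pm\varepsilon)v^*$ as super/subsolutions of~\eqref{105}. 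Using the strict KPP decrease of $u\mapsto f(x,t,u)/u$, a short computation produces a positive lower bound $\delta(\varepsilon)\inf v^*>0$ for the super/subsolution inequality, and this margin must dominate the residual
\[
R_\sigma(x,t):=\frac{\mu}{\sigma^m}\int_\Omega J_\sigma(x-y)\bigl(v^*(y,t)-v^*(x,t)\bigr)\,dy.
\]
Once $\|R_\sigma\|_\infty<\delta(\varepsilon)\inf v^*$, the comparison principle and the uniqueness from Lemma~\ref{th104}(i) force $(1-\varepsilon)v^*\leq u_\sigma^*\leq (1+\varepsilon)v^*$ on $\bar\Omega\times\mathbb{R}$, and sending $\varepsilon\downarrow 0$ gives the claimed uniform convergence.

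The main obstacle is therefore the quantitative estimate $\|R_\sigma\|_\infty\to 0$. For $\sigma\to\infty$ it is straightforward: the crude bound $|v^*(y)-v^*(x)|\leq 2\|v^*\|_\infty$ combined with $\int_\Omega J_\sigma(x-y)\,dy=O(\sigma^{-N})$ yields $R_\sigma=O(\sigma^{-N-m})$ for every $m\geq 0$. The delicate case is $\sigma\to 0^+$ with $m\in[0,2)$: after substituting $z=(y-x)/\sigma$ the first-order Taylor term is annihilated by the identity $\int J(z)z_i\,dz=0$ coming from the per-component symmetry of $J$, and with the $C^1_x$-regularity of $v^*$ obtained by differentiating the parametric ODE one reaches $R_\sigma=o(\sigma^{1-m})$, which already suffices for $m\in[0,1]$. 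To cover the remaining range $m\in[1,2)$ one must further cancel the off-diagonal second-order Taylor contributions (again via per-component symmetry) and upgrade the $x$-regularity of $v^*$, either by bootstrapping the parametric ODE or by mollifying $v^*$ in $x$ and balancing the resulting error; this second-order cancellation is precisely where the hypothesis $m<2$ is consumed.
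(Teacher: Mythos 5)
Your overall architecture matches the paper's exactly: combine the eigenvalue asymptotics of Theorems~\ref{th102}(ii) and~\ref{th103}(ii) with the dichotomy of Lemma~\ref{th104} to get existence/uniqueness/stability, then run the $(1\pm\varepsilon)v^{*}$ comparison argument of Theorem~\ref{th105}(i) to obtain uniform convergence. The paper itself disposes of the convergence step with ``similar to / by the same proof of Theorem~\ref{th105}(i)'' in both parts, so your write-up is in fact more explicit than the source about what must actually be verified.

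You are right to single out the residual estimate as the genuine nontrivial step. In Theorem~\ref{th105}(i) the residual is $(1-\delta)\mu\int_{\Omega}J(x-y)\bigl(v^{*}(y,t)-v^{*}(x,t)\bigr)\,dy$ and it dies simply because $\mu\to 0^{+}$; in Theorem~\ref{th106} it becomes $(1-\delta)\tfrac{\mu}{\sigma^{m}}\int_{\Omega}J_{\sigma}(x-y)\bigl(v^{*}(y,t)-v^{*}(x,t)\bigr)\,dy$ and does \emph{not} die for free. Your $\sigma\to\infty$ bound $O(\sigma^{-N-m})$ is correct. For $\sigma\to 0^{+}$ you correctly see that the Taylor-plus-symmetry cancellation and the regularity of $v^{*}$ in $x$ become the bottleneck, and that this is exactly where the restriction $m<2$ is spent; the paper silently imports the same machinery it used in the proof of Theorem~\ref{th103}(ii) (equation~\eqref{225}) without re-examining whether $v^{*}$ has the $C^{2}$-in-$x$ regularity that expansion needs. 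Since Lemma~\ref{le301} only gives continuity of $v^{*}$ in $x$, and (F)(1) gives $f(\cdot,t,u)\in C^{1}(\bar\Omega)$ (hence at best $C^{1}_{x}$ for $v^{*}$ by differentiating the parametric ODE), your observation that the range $m\in[1,2)$ is not covered without an extra device (bootstrap or mollification) is a real gap that the paper papers over. Your mollification suggestion is the right kind of fix: replace $v^{*}$ by a mollified $v^{*}_{\rho}$, note that $\tau(v^{*}_{\rho})_{t}=\bigl(f(\cdot,t,v^{*})\bigr)_{\rho}$, absorb the $O(\omega(\rho))$ erosion of the KPP margin by shrinking $\rho$ first, and then let $\sigma\to 0^{+}$ with $\rho$ fixed so that $R_{\sigma}=O(\sigma^{2-m}/\rho^{2})\to 0$; this should be spelled out rather than gestured at, since it is the only place the argument is not already in the paper. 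One further caveat that neither you nor the paper address: the first-order cancellation $\int J(z)z_{i}\,dz=0$ requires the rescaled integration domain $(\Omega-x)/\sigma$ to contain the full support $B_{1}(0)$ of $J$, which fails for $x$ within distance $\sigma$ of $\partial\Omega$; for $m\geq 1$ the uncancelled first-order contribution in this boundary layer is a priori only $O(\sigma^{1-m})$ and needs a separate argument (the same defect already appears in the paper's formula~\eqref{225}, where it is asserted incorrectly that $B(0,1)\subset(\Omega-x)/\sigma$ for all $x\in\Omega$).
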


In addition, Shen and Xie proved in \cite{Shen-2015-JDE}
that for the case $m=2$ and $\lambda_{r}<0$, there exists $\sigma_{0}>0$ such that
\eqref{105} admits a unique solution $u_{\sigma}^{\ast}\in \chi_{\Omega}^{++}$ that is globally asymptotically stable for all $\sigma\in(0,\sigma_{0})$ and
\begin{equation*}
\underset{\sigma \rightarrow 0^{+} }{\lim }\ u_{\sigma}^{\ast}(x,t)=v(x,t) \ \ \text{uniformly in} \ \ (x,t)\in \bar{\Omega}\times\mathbb{R},
\end{equation*}
where $v$ is the positive 1-periodic solution of the corresponding reaction
diffusion equation.
For the case $m>2$, it seems reasonable to conjecture that when $\bar{\hat{a}}>0$, there exists $\sigma_{0}>0$ such that
\eqref{105} admits a unique solution $u_{\sigma}^{\ast}\in \chi_{\Omega}^{++}$ that is globally asymptotically stable for all $\sigma\in(0,\sigma_{0})$ and
there holds
\begin{equation*}
\underset{\sigma \rightarrow 0^{+} }{\lim }\ u_{\sigma}^{\ast}(x,t)=v^{*}(t) \ \ \text{uniformly in} \ \ (x,t)\in \bar{\Omega}\times\mathbb{R},
\end{equation*}
where $v^{*}(t)$ is the unique positive 1-periodic solution of \eqref{106}.
We refer interested readers to \cite[Theorem 1.8 (iii)]{Su-2019-submitted} for the time-independent case.

%These results clearly highlight that if $\max_{\bar{\Omega}}\hat{a}>0$, then the small and large dispersal spread strategies are favored. Moreover,
%the boundary conditions may play an important role in the persistence of the populations. For example,
%when $m=0$, the large spread strategy with Neumann boundary conditions can be more advantageous
%for species to persist, in comparison to Dirichlet boundary conditions \cite{shen-2019}.
%may not be the case for $m=0$.

The rest of the paper is organised as follows. In Section 2, we first establish the equivalence of different definitions of the generalised principal eigenvalue and a characterisation of the generalised principal eigenvalue by the infimum of the spectrum. Then we study the influences of the frequency, the dispersal rate and the dispersal spread on the generalised principal eigenvalue. Section 3 is devoted to investigating the effects of the frequency, the dispersal rate and the dispersal spread on persistence criteria of populations.

%%%%%%%%%%%%%%%%%%%%%%%%%%%%%%%%%%%%%%%%%%%%%%%%%%%%%%%%%%%%%%%%%%%%%%%%
\section{Time-periodic nonlocal dispersal operators}

In this section we consider the eigenvalue problem
\begin{equation} \label{200}
\begin{cases}
L_{\Omega}^{\tau,\mu,\sigma,m}[v](x,t)+\lambda v(x,t)=0,~~~~~~~& (x,t)\in \bar{\Omega}\times\mathbb{R},\\
v(x,t+1)=v(x,t),~~~~~ &(x,t)\in \bar{\Omega}\times\mathbb{R}.
\end{cases}
\end{equation}
As shown in \cite{Coville-2010-JDE,Rawal-2012-JDDE,Shen-2010-JDE,shen-2019}, the operator $L_{\Omega}^{\tau,\mu,\sigma,m}$ may not have any principal eigenvalue.
However, the generalised principal eigenvalue $\lambda_{p}(L_{\Omega}^{\tau,\mu,\sigma,m})$
can become the surrogate of the principal eigenvalue. Here,
we establish the equivalent definitions of $\lambda_{p}(L_{\Omega}^{\tau,\mu,\sigma,m})$ and
study the dependence of $\lambda_{p}(L_{\Omega}^{\tau,\mu,\sigma,m})$ on the
frequency, the dispersal rate and
the dispersal spread.

\subsection{The equivalence of the generalised principal eigenvalue}
\noindent

%This subsection is devoted to considering the equivalent quantity of $\lambda_{p}(L_{\Omega}^{\tau,\mu,\sigma,m})$. Here,
We consider the following general
form of nonlocal dispersal operators
\begin{equation*}
M_{\Omega}(b)[v](x,t):= -\tau v_{t}(x,t)+ \mu\int_{\Omega}J(x-y)v(y,t)dy
+b(x,t)v(x,t),\ \ \  (x,t)\in\bar{\Omega}\times\mathbb{R},
\end{equation*}
where $b\in C_{1}(\bar{\Omega}\times\mathbb{R})$. We define
\begin{equation*}
\lambda_{1}=\inf\{Re\lambda \ | \ \lambda\in \sigma(-M_{\Omega}(b))\}.
\end{equation*}

Firstly,
we recall two  lemmas in \cite[Theorem 3.3 and Proposition 6.1 (iii)]{shen-2019}.
\begin{lemma}\label{le201}
Assume that {\rm(J)} holds and $b\in C_{1}(\bar{\Omega}\times\mathbb{R})$. For any $\epsilon>0$,
there exists $b_{\epsilon}\in C_{1}(\bar{\Omega}\times\mathbb{R})$ such that the following conclusions hold:
\begin{itemize}
\item[(i)] There holds $\underset{\bar{\Omega}\times\mathbb{R}}{\max}|b-
    b_{\epsilon}|<\epsilon$;
\item[(ii)] $\lambda_{1}^{\epsilon}$ is the principal eigenvalue of $M_{\Omega}(b_{\epsilon})$,
    where $\lambda_{1}^{\epsilon}=\inf\{Re\lambda \ | \ \lambda\in \sigma(-M_{\Omega}(b_{\epsilon}))\}$;
\item[(iii)] There holds $|\lambda_{1}^{\epsilon}-\lambda_{1}|<\epsilon$.
\end{itemize}
\end{lemma}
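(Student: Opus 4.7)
The plan is to construct $b_\epsilon$ by perturbing $b$ with a small, smooth ``bump'' chosen so that the perturbed operator $M_\Omega(b_\epsilon)$ falls into a regime where the infimum of the spectrum is actually attained as a principal eigenvalue, and then to deduce item (iii) from standard continuity of the spectrum under bounded perturbations.

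First I would invoke a known sufficient criterion (of Rawal--Shen / Shen--Xie type for time-periodic nonlocal dispersal operators): if the time-average $\hat c(x):=\int_0^1 c(x,t)\,dt$ attains its maximum over $\bar\Omega$ at a single point $x_0\in\Omega$ with a non-degenerate $C^2$ profile there (for example $\hat c(x_0)-\hat c(x)\geq C|x-x_0|^2$ locally), then the operator $M_\Omega(c)$ admits a principal eigenvalue, and this eigenvalue coincides with the infimum of the real parts of the spectrum of $-M_\Omega(c)$. To force $b_\epsilon$ into this regime, pick a point $x_0\in\Omega$ where $\hat b$ attains its maximum on $\bar\Omega$ (which exists by compactness of $\bar\Omega$ and continuity of $\hat b$), fix a nonnegative cutoff $\phi\in C_c^{2}(\Omega)$ with $\phi(x_0)=\|\phi\|_\infty=1$ and strictly concave on a small neighborhood of $x_0$, and set
\begin{equation*}
b_\epsilon(x,t):=b(x,t)+\delta\,\phi(x)
\end{equation*}
for some $0<\delta<\epsilon$. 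By shrinking the support of $\phi$ into a sufficiently small neighborhood of $x_0$ (and adjusting $\delta$ if necessary), $x_0$ remains the unique global maximum point of $\hat b_\epsilon=\hat b+\delta\phi$ on $\bar\Omega$ and inherits the desired strictly concave $C^2$ profile from $\phi$.

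With $b_\epsilon$ so chosen, item (i) is immediate since $\|b-b_\epsilon\|_\infty\leq\delta\,\|\phi\|_\infty<\epsilon$, while item (ii) follows at once from the existence criterion just invoked. For item (iii), I would apply elementary bounded-perturbation theory: the difference $M_\Omega(b)-M_\Omega(b_\epsilon)=(b-b_\epsilon)I$ is a bounded multiplication operator of norm at most $\delta$, so the spectra of $-M_\Omega(b)$ and $-M_\Omega(b_\epsilon)$ lie within Hausdorff distance $\delta$ of one another; in particular the infima of their real parts satisfy $|\lambda_1-\lambda_1^\epsilon|\leq\delta<\epsilon$.

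The main obstacle is the very first step, namely verifying the existence criterion for a principal eigenvalue under a non-degenerate maximum hypothesis on $\hat c$. This is not a soft fact: it typically proceeds via a Krein--Rutman argument for the Poincar\'e period map of $M_\Omega(c)$, combined with a delicate construction of sub- and super-solutions that localize a candidate eigenfunction near $x_0$ while excluding mass accumulation on the essential spectrum. Once that theorem is in hand, the remainder of Lemma \ref{le201} is a routine perturbation argument. A secondary technical nuisance is ensuring that adding $\delta\phi$ does not displace the global maximum of $\hat b_\epsilon$ elsewhere in $\bar\Omega$; this is handled by localizing $\phi$ near the pre-existing maximum point of $\hat b$ and, if needed, by choosing $\delta$ smaller than the gap between $\max_{\bar\Omega}\hat b$ and the supremum of $\hat b$ outside a neighborhood of $x_0$.
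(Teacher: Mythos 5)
The paper does not prove Lemma~\ref{le201} itself: it explicitly recalls it from Shen--Vo \cite[Theorem~3.3 and Proposition~6.1(iii)]{shen-2019}, so there is no internal proof to compare against. Your outline, however, contains a genuine gap at its central step: the ``sufficient criterion'' you invoke is backwards. For nonlocal operators with a compact convolution part, the known sufficient condition for the top of the spectrum to be attained as a principal eigenvalue (Coville \cite{Coville-2010-JDE}, Rawal--Shen \cite{Rawal-2012-JDDE}, Shen--Xie \cite{Shen-2015-DCDS}) is a non-integrability condition of the form
\begin{equation*}
\int_{\Omega}\frac{dy}{\max_{\bar\Omega}\hat c-\hat c(y)}=\infty,
\end{equation*}
which is ensured by a \emph{flat} maximum (attained on a set of positive measure) or by a maximum that is sufficiently degenerate, not by a non-degenerate $C^2$ peak. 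Your hypothesis $\hat c(x_0)-\hat c(x)\geq C|x-x_0|^2$ is a \emph{lower} bound on the gap, which makes the reciprocal behave like $|x-x_0|^{-2}$; this is integrable for $N\geq 3$, precisely the situation in which the principal eigenvalue may fail to exist. Thus your construction of $b_\epsilon$ by adding a strictly concave bump at a maximum point of $\hat b$ does not, in general, push the operator into the regime where the principal eigenvalue exists --- in fact it pushes in the opposite direction (and it can even destroy a pre-existing flat maximum of $\hat b$).

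The standard construction in the cited literature flattens the top rather than sharpening it: for instance, set $\hat b_\epsilon(x)=\min\{\hat b(x)+\delta,\;\max_{\bar\Omega}\hat b\}$ (and lift this modification to the time-dependent $b_\epsilon(x,t)=b(x,t)+\hat b_\epsilon(x)-\hat b(x)$), so that $\hat b_\epsilon$ attains its maximum on the open set $\{\hat b>\max\hat b-\delta\}$ of positive measure. Then $1/(\max\hat b_\epsilon-\hat b_\epsilon)$ is identically $+\infty$ on that set, the non-integrability criterion is trivially met, and the principal eigenvalue of $M_\Omega(b_\epsilon)$ exists for every $N$. Item~(i) is immediate from $\|\hat b_\epsilon-\hat b\|_\infty\leq\delta$, and item~(iii) follows, as you indicate, from the fact that $M_\Omega(b)-M_\Omega(b_\epsilon)$ is a bounded multiplication operator of norm at most $\delta$ (though one should justify that the bottom of the real part of the spectrum moves by at most $\delta$ for these non-self-adjoint operators, rather than appeal to Hausdorff continuity of the full spectrum as a black box). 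As a secondary remark, you should not invoke Lemma~\ref{le202} here to get~(iii), since that lemma concerns $\lambda_p$ and the identification $\lambda_p=\lambda_1$ is established only in Theorem~\ref{th201}, which uses the present lemma; a direct spectral perturbation argument is needed to avoid circularity.
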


\begin{lemma}\label{le202}
Assume that {\rm(J)} holds and $b\in C_{1}(\bar{\Omega}\times\mathbb{R})$. Then
$\lambda_{p}(M_{\Omega}(b))$ is a Lipschitz continuous function with respect to $b$. More precisely, for every $b_{1}, b_{2}\in C_{1}(\bar{\Omega}\times\mathbb{R})$, we have
\begin{equation*}
|\lambda_{p}(M_{\Omega}(b_{1}))-
\lambda_{p}(M_{\Omega}(b_{2}))|
\leq \underset{t\in[0,1]}{\sup}\
\|b_{1}(\cdot,t)-b_{2}(\cdot,t)\|_{\infty}.
\end{equation*}
\end{lemma}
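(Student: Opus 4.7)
The plan is to work directly from the $\sup$-characterisation that defines $\lambda_p$, exploiting the obvious monotonicity of $b\mapsto M_\Omega(b)$: replacing $b$ by a pointwise larger function increases $M_\Omega(b)[v]$ when $v>0$, so a test function that works for one potential almost works for a nearby one, up to an additive shift in $\lambda$. This is the standard device used to show Lipschitz dependence of generalised principal eigenvalues on the zero-order coefficient.

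More concretely, set $\delta:=\sup_{t\in[0,1]}\|b_1(\cdot,t)-b_2(\cdot,t)\|_\infty$, so that $b_1\le b_2+\delta$ and $b_2\le b_1+\delta$ on $\bar\Omega\times\mathbb{R}$. The first step is to fix an arbitrary $\lambda<\lambda_p(M_\Omega(b_2))$. By the definition of $\lambda_p$ there exists $v\in\chi_\Omega^{++}$ satisfying $(M_\Omega(b_2)+\lambda)[v]\le 0$. Since $v>0$, the inequality $b_1\le b_2+\delta$ yields
\begin{equation*}
M_\Omega(b_1)[v]+(\lambda-\delta)v
=M_\Omega(b_2)[v]+(b_1-b_2)v+\lambda v-\delta v
\le (M_\Omega(b_2)+\lambda)[v]\le 0,
\end{equation*}
so $v$ is an admissible test function for $M_\Omega(b_1)$ at the level $\lambda-\delta$. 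Hence $\lambda-\delta\le \lambda_p(M_\Omega(b_1))$, and taking $\lambda\uparrow\lambda_p(M_\Omega(b_2))$ gives $\lambda_p(M_\Omega(b_2))-\lambda_p(M_\Omega(b_1))\le \delta$.

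The second step is simply to reverse the roles of $b_1$ and $b_2$, using $b_2\le b_1+\delta$ in the same way, to obtain the opposite inequality $\lambda_p(M_\Omega(b_1))-\lambda_p(M_\Omega(b_2))\le \delta$. Combining the two bounds yields the claimed estimate.

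There is no real obstacle here; the only mild subtlety is verifying that the shifted test function $v\in\chi_\Omega^{++}$ remains admissible for the perturbed operator, which is immediate since the shift $\delta$ is a constant and therefore does not interact with the time derivative $-\tau v_t$ or with the integral term $\mu\int_\Omega J(x-y)v(y,t)\,dy$ in $M_\Omega$. In particular, the proof makes no use of whether $\lambda_p(M_\Omega(b_i))$ is attained by an actual eigenfunction, so the argument applies in the general setting without needing to invoke Lemma~\ref{le201}.
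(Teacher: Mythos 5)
Your argument is correct and self-contained, and the computation $M_\Omega(b_1)[v]-M_\Omega(b_2)[v]=(b_1-b_2)v$ combined with the sup-characterisation of $\lambda_p$ gives exactly the claimed two-sided bound. Note that the paper does not actually prove Lemma~\ref{le202}: it recalls it from \cite{shen-2019} (Proposition~6.1(iii)), so there is no in-paper proof to compare against, but the elementary comparison argument you give is precisely the standard one behind the cited result and would serve as a complete replacement for the citation.
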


Next, we prove the following two results, from which %contain the proof of
Theorem \ref{th101} follows as a consequence.
\begin{theorem}\label{th201}
Assume that {\rm(J)} holds and $b\in C_{1}(\bar{\Omega}\times\mathbb{R})$. Then there holds
\begin{equation*}
\lambda_{p}(M_{\Omega}(b))=\lambda_{1}.
\end{equation*}
\end{theorem}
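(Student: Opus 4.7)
The plan is to prove $\lambda_p(M_\Omega(b))=\lambda_1$ by approximation. Lemma \ref{le201} lets us reduce to the case where the spectral bound is actually attained as a principal eigenvalue, and Lemma \ref{le202} then passes the resulting identity from the approximated operator back to the original one. So the real work is concentrated in verifying the identity when $\lambda_1$ is a genuine principal eigenvalue with a positive eigenfunction.

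First I would fix $\epsilon>0$ and choose $b_\epsilon$ as in Lemma \ref{le201}, so that $\lambda_1^\epsilon$ is the principal eigenvalue of $M_\Omega(b_\epsilon)$ with some positive eigenfunction $\phi\in\chi_\Omega^{++}$ satisfying $M_\Omega(b_\epsilon)[\phi]+\lambda_1^\epsilon\phi=0$. Plugging $\phi$ directly into the sup defining $\lambda_p$ already gives $\lambda_1^\epsilon\leq\lambda_p(M_\Omega(b_\epsilon))$.

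For the reverse inequality I would use a duality argument. Since $\lambda_1^\epsilon$ is a principal eigenvalue, a parallel Krein--Rutman argument for the adjoint operator $M_\Omega^{*}(b_\epsilon)$ (which differs from $M_\Omega(b_\epsilon)$ only by reversing the sign of $\tau\partial_t$ and replacing $J(x-y)$ by $J(y-x)$) produces an adjoint principal eigenfunction $\phi^{*}\in\chi_\Omega^{++}$ with $M_\Omega^{*}(b_\epsilon)[\phi^{*}]+\lambda_1^\epsilon\phi^{*}=0$. Given any $\lambda$ and any test $v\in\chi_\Omega^{++}$ with $(M_\Omega(b_\epsilon)+\lambda)[v]\leq 0$, I would multiply by $\phi^{*}$ and integrate over $\bar\Omega\times[0,1]$; Fubini handles the nonlocal term, time-periodicity kills the boundary terms coming from integration by parts in $t$, and the identity collapses to $(\lambda-\lambda_1^\epsilon)\iint_{\bar\Omega\times[0,1]}\phi^{*}v\,dx\,dt\leq 0$. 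Strict positivity of $\phi^{*}v$ forces $\lambda\leq\lambda_1^\epsilon$, and taking the supremum over such $\lambda$ yields $\lambda_p(M_\Omega(b_\epsilon))\leq\lambda_1^\epsilon$. Combining with Lemma \ref{le202} and Lemma \ref{le201}(iii), one obtains $|\lambda_p(M_\Omega(b))-\lambda_1|\leq|\lambda_p(M_\Omega(b))-\lambda_p(M_\Omega(b_\epsilon))|+|\lambda_p(M_\Omega(b_\epsilon))-\lambda_1^\epsilon|+|\lambda_1^\epsilon-\lambda_1|<2\epsilon$, and letting $\epsilon\to 0^{+}$ finishes the argument.

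The main obstacle will be the duality step, specifically the construction and strict positivity of the adjoint principal eigenfunction $\phi^{*}$. The operator $M_\Omega(b_\epsilon)$ acts on $\chi_\Omega$ with only $C^{0,1}$ regularity in $t$, so the adjoint must be set up on the same space (rather than on a standard parabolic Sobolev space), and strict positivity would have to be extracted from $J(0)>0$ together with the time-periodic version of a strong maximum principle. One could instead try a quotient argument: test on $v/\phi$ with $\phi$ the primal eigenfunction and derive a pointwise inequality, but the adjoint route is cleaner provided the Krein--Rutman ingredients for the period map of $M_\Omega(b_\epsilon)$ are available, which the hypotheses of Lemma \ref{le201} are specifically tailored to guarantee.
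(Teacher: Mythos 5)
Your proposal is correct and uses the same overall scaffolding as the paper (reduce via Lemma \ref{le201} to the case where the spectral bound is attained as a principal eigenvalue, then pass to the limit using the Lipschitz continuity in Lemma \ref{le202}), but it differs in how the key inequality $\lambda_{p}(M_{\Omega}(b_{\epsilon}))\leq\lambda_{1}^{\epsilon}$ is established. The paper handles this by a two-case split and, in the principal-eigenvalue case, simply points to the proof of Theorem 2.3 in \cite{shen-2019} without reproducing it; you instead give a self-contained duality argument: multiply a supersolution $v$ by the adjoint positive eigenfunction $\phi^{*}$, use symmetry of $J$ and Fubini for the nonlocal term, integrate by parts in $t$ (periodicity killing the boundary contributions), and collapse to $(\lambda-\lambda_{1}^{\epsilon})\iint\phi^{*}v\leq 0$. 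This is exactly the kind of pairing the paper itself relies on in \eqref{402} and the proof of Theorem \ref{th401}, so the adjoint eigenfunction $\phi^{*}\in\chi_{\Omega}^{++}$ is already available in this framework (the adjoint operator is, up to time reversal, another operator of the same form, so the same Krein--Rutman argument applies); the ``main obstacle'' you flag is thus not an obstacle here. Your route also streamlines the structure by folding both cases into the single approximation scheme rather than treating the case where $\lambda_{1}$ is already a principal eigenvalue separately. What the paper's route buys is brevity (reuse of an existing cited lemma); what yours buys is a fully written argument that does not depend on the reader having \cite{shen-2019} open. Both are valid.
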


\begin{proof}
The proof is divided into two cases.

\textbf{Case 1.} We prove the result under the additional assumption that $\lambda_{1}$ is the principal eigenvalue. By the
definition of the principal eigenvalue, there is $\varphi_{1}\in \chi_{\Omega}^{++}$ such that
\begin{equation*}
M_{\Omega}(b)[\varphi_{1}]+\lambda_{1}\varphi_{1}=0 ~~~\ \ \ \ \text{in}~~ \ \bar{\Omega}\times\mathbb{R}.
\end{equation*}
Thanks to the definition of $\lambda_{p}(M_{\Omega}(b))$, we have $\lambda_{1}\leq \lambda_{p}(M_{\Omega}(b))$. It remains to establish the inequality $\lambda_{p}(M_{\Omega}(b))\leq \lambda_{1}$, which is similar to the proof of \cite[Theorem 2.3]{shen-2019}. Here, we omit it. Thus, we get $\lambda_{p}(M_{\Omega}(b))=\lambda_{1}$.

\textbf{Case 2.} If $\lambda_{1}$ is not the principal eigenvalue, we can use an approximation argument. More precisely,
applying Lemma \ref{le201}, we find that for each $\epsilon>0$, there exists $b_{\epsilon}\in C_{1}(\bar{\Omega}\times\mathbb{R})$ such that
\begin{equation}\label{207}
\underset{\bar{\Omega}\times\mathbb{R}}{\max}\ |b_{\epsilon}-b|< \epsilon, \
|\lambda_{1}-\lambda_{1}^{\epsilon}|<\epsilon
\end{equation}
and $\lambda_{1}^{\epsilon}$ is the principal eigenvalue of
$M_{\Omega}(b_{\epsilon})$. Then, we apply \textbf{Case 1} to conclude
\begin{equation}\label{208}
\lambda_{p}(M_{\Omega}(b_{\epsilon}))=\lambda_{1}^{\epsilon}.
\end{equation}
Since $\lambda_{p}(M_{\Omega}(b))$ is Lipschitz continuous with respect to $b$ in Lemma \ref{le202} and the inequalities (\ref{207}), setting $\epsilon\rightarrow0$ in
(\ref{208}) yields
%\begin{equation*}
$
\lambda_{p}(M_{\Omega}(b))=\lambda_{1}$.
%\end{equation*}
\end{proof}

\begin{theorem}\label{th202}
Assume that {\rm(J)} holds and $b\in C_{1}(\bar{\Omega}\times\mathbb{R})$. Then there holds
\begin{equation*}
\lambda_{p}(M_{\Omega}(b))=\lambda_{p}^{'}(M_{\Omega}(b)).
\end{equation*}
\end{theorem}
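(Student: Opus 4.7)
The plan is to prove the two inequalities $\lambda_p(M_\Omega(b)) \le \lambda_p'(M_\Omega(b))$ and $\lambda_p'(M_\Omega(b)) \le \lambda_p(M_\Omega(b))$ separately, invoking Theorem~\ref{th201} (which identifies $\lambda_p$ with $\lambda_1$) at the second step. The key technical tool is a sliding/comparison argument adapted to the time-periodic nonlocal operator $M_\Omega(b)$.

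For the first inequality I argue by contradiction. Assume $\lambda_p' < \lambda_p$ and pick $\lambda_1,\lambda_2$ with $\lambda_p' < \lambda_2 < \lambda_1 < \lambda_p$. By the two definitions there exist $v_1, v_2 \in \chi_\Omega^{++}$ with $(M_\Omega(b)+\lambda_1)v_1 \le 0$ and $(M_\Omega(b)+\lambda_2)v_2 \ge 0$; adding $(\lambda_1-\lambda_2)v_2$ to the second inequality gives $(M_\Omega(b)+\lambda_1)v_2 \ge (\lambda_1-\lambda_2)v_2 > 0$. Since $v_1, v_2$ are continuous, strictly positive, and $1$-periodic in $t$, the quotient $v_1/v_2$ attains its minimum $\kappa>0$ at some $(x_0,t_0)\in\bar\Omega\times[0,1]$, and the function $w:=v_1-\kappa v_2$ is nonnegative with $w(x_0,t_0)=0$. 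Linearity then yields
\[(M_\Omega(b)+\lambda_1)w \;\le\; 0 - \kappa(\lambda_1-\lambda_2)v_2 \;<\; 0.\]
Evaluating the same expression pointwise at $(x_0,t_0)$: the zeroth-order contribution $(b+\lambda_1)w$ vanishes; the derivative $w_t(x_0,t_0)$ vanishes because $w(x_0,\cdot)$ is $C^1$ and $1$-periodic with a minimum at $t_0$; and the nonlocal term $\mu\int_\Omega J(x_0-y)w(y,t_0)\,dy$ is nonnegative since $w\ge 0$. Hence $(M_\Omega(b)+\lambda_1)w(x_0,t_0)\ge 0$, contradicting the strict inequality above.

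For the reverse inequality $\lambda_p'\le\lambda_p$, I combine Theorem~\ref{th201} with the approximation machinery of Lemma~\ref{le201}. When $\lambda_1$ is itself the principal eigenvalue of $-M_\Omega(b)$, the associated positive eigenfunction $\varphi_1\in\chi_\Omega^{++}$ satisfies $(M_\Omega(b)+\lambda_1)\varphi_1=0\ge 0$, which makes $\varphi_1$ admissible in the definition of $\lambda_p'$ and yields $\lambda_p'\le\lambda_1=\lambda_p$. In the general case, Lemma~\ref{le201} produces $b_\epsilon$ with $\|b-b_\epsilon\|_\infty<\epsilon$ for which $\lambda_1^\epsilon$ is the principal eigenvalue of $-M_\Omega(b_\epsilon)$, with eigenfunction $\varphi_\epsilon\in\chi_\Omega^{++}$ and $|\lambda_1^\epsilon-\lambda_1|<\epsilon$. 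Then
\[(M_\Omega(b)+\lambda_1^\epsilon+\epsilon)\varphi_\epsilon \;\ge\; (M_\Omega(b_\epsilon)+\lambda_1^\epsilon)\varphi_\epsilon \;=\; 0,\]
so $\varphi_\epsilon$ certifies $\lambda_p'(M_\Omega(b))\le\lambda_1^\epsilon+\epsilon$. Sending $\epsilon\to 0$ and invoking Theorem~\ref{th201} gives $\lambda_p'(M_\Omega(b))\le\lambda_1=\lambda_p(M_\Omega(b))$, completing the proof.

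The main obstacle I anticipate is the sliding step in the first inequality, since no classical strong maximum principle is directly available for a time-periodic nonlocal operator, so the contradiction must be produced pointwise at the touching point. Two structural features make this feasible: $1$-periodicity in $t$ forces each spatial slice of $w$ to attain its minimum at a time where $w_t=0$, and the nonnegativity of the kernel $J$ makes the nonlocal integral term have a definite sign at $(x_0,t_0)$. A minor technical point is that $\kappa$ must actually be attained, which follows from the strict positivity of $v_1,v_2$ and compactness of $\bar\Omega\times[0,1]$.
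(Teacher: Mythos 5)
Your proof is correct. The second inequality ($\lambda_p' \le \lambda_p$) is essentially the paper's argument: approximate $b$ by $b_\epsilon$ using Lemma~\ref{le201}, use the positive eigenfunction $\varphi_\epsilon$ to certify an upper bound on $\lambda_p'(M_\Omega(b))$, then pass to the limit and invoke Theorem~\ref{th201}.

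The first inequality ($\lambda_p \le \lambda_p'$) is where your route genuinely diverges from the paper's. The paper sets $w=\psi/\varphi$ (the ratio of the supersolution to the subsolution), expands $M_\Omega(b)[w\varphi]$ to isolate $-\tau w_t\varphi+\mu\int J(x-y)\varphi(y,t)(w(y,t)-w(x,t))\,dy$, and derives a contradiction at a maximum of $w$. You instead slide $\kappa v_2$ up until it touches $v_1$ from below (i.e.\ take $\kappa=\min v_1/v_2$), and analyze $w=v_1-\kappa v_2\ge 0$ directly by linearity: $(M_\Omega(b)+\lambda_1)w<0$ everywhere, yet at the touching point $(x_0,t_0)$ the time derivative vanishes (periodicity plus a minimum), the zeroth-order term vanishes ($w(x_0,t_0)=0$), and the nonlocal term is $\ge 0$ ($J\ge 0$, $w\ge 0$), so $(M_\Omega(b)+\lambda_1)w(x_0,t_0)\ge 0$ --- a contradiction. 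This is a cleaner bookkeeping: it avoids the explicit expansion of $M_\Omega(b)[w\varphi]$ and uses only linearity, nonnegativity of $J$, and the first-order condition at a periodic minimum. The two arguments are ultimately dual (your touching point is the paper's maximizer of $\psi/\varphi$ up to inverting the ratio), but your linear version is more transparent and generalizes more readily. One small point worth making explicit, which you handle implicitly by introducing two levels $\lambda_2<\lambda_1$: the defining sets for $\lambda_p$ and $\lambda_p'$ are respectively downward- and upward-closed, which is what guarantees the witnesses $v_1,v_2$ exist for the intermediate levels.
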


\begin{proof}
We first show that $\lambda_{p}(M_{\Omega}(b))\leq \lambda_{p}^{'}(M_{\Omega}(b))$. Let us assume by contradiction that
\begin{equation*}
\lambda_{p}^{'}(M_{\Omega}(b))<\lambda_{p}(M_{\Omega}(b)).
\end{equation*}
Pick now $\lambda\in (\lambda_{p}^{'}(M_{\Omega}(b)),\lambda_{p}(M_{\Omega}(b)))$, then, by the definition of $\lambda_{p}(M_{\Omega}(b))$ and $\lambda_{p}^{'}(M_{\Omega}(b))$, there exist $\varphi\in \chi^{++}_{\Omega}$ and $\psi\in \chi^{++}_{\Omega}$ such that
\begin{align}\label{201}
M_{\Omega}(b)[\varphi](x,t)+\lambda \varphi(x,t)\leq 0 &~~~\ \ \ \ \text{in}~~\  \bar{\Omega}\times\mathbb{R},\\
M_{\Omega}(b)[\psi](x,t)+\lambda \psi(x,t)\geq 0 &~~~\ \ \ \ \text{in}~~ \ \bar{\Omega}\times\mathbb{R}.
\end{align}
By taking $\lambda$ bigger if necessary, we assume that $\psi$ satisfies
\begin{equation}\label{203}
M_{\Omega}(b)[\psi](x,t)+\lambda \psi(x,t)> 0 ~~~\ \ \ \ \text{in}~~ \ \bar{\Omega}\times\mathbb{R}.
\end{equation}

%Let us denote
Set $w:=\frac{\psi}{\varphi}\in \chi^{++}_{\Omega}$. Using (\ref{201}), a direct computation yields
\begin{align*}
M_{\Omega}(b)[\psi]=&M_{\Omega}(b)[w\varphi]\\
=&-\tau(w\varphi)_{t}+\mu\int_{\Omega}J(x-y)w(y,t)\varphi(y,t)dy
+b(x,t)w\varphi\\
=&-\tau w_{t}\varphi+\mu\int_{\Omega}J(x-y)\varphi(y,t)(w(y,t)-w(x,t))dy-\lambda w\varphi\\
&~~+w(-\tau \varphi_{t}+\mu\int_{\Omega}J(x-y)\varphi(y,t)dy+b(x,t)\varphi
+\lambda \varphi)\\
\leq&-\tau w_{t}\varphi+\mu\int_{\Omega}J(x-y)\varphi(y,t)(w(y,t)-w(x,t))dy-\lambda\psi.
\end{align*}
By (\ref{203}), we find
\begin{equation}\label{204}
0<-\tau w_{t}\varphi+\mu\int_{\Omega}J(x-y)\varphi(y,t)(w(y,t)-w(x,t))dy
~~~\ \ \ \ \text{in}~~\ \bar{\Omega}\times\mathbb{R}.
\end{equation}
Since $w\in \chi^{++}_{\Omega}$, there exists $(x_{0},t_{0})\in \bar{\Omega}\times[0,1]$ such that
\begin{equation*}
w(x_{0},t_{0})=\underset{\bar{\Omega}\times\mathbb{R}}{\max}\ w , \ \ w_{t}(x_{0},t_{0})=0.
\end{equation*}
Hence, setting $(x,t)=(x_{0},t_{0})$ in (\ref{204}) yields
\begin{equation*}
0<-\tau w_{t}(x_{0},t_{0})\varphi(x_{0},t_{0})+\mu\int_{\Omega}J(x_{0}-y)\varphi(y,t_{0})
(w(y,t_{0})-w(x_{0},t_{0}))dy\leq 0,
\end{equation*}
which is a contradiction. Therefore, $\lambda_{p}(M_{\Omega}(b))\leq \lambda_{p}^{'}(M_{\Omega}(b))$.

To complete the proof, it suffices to establish %the inequality
%\begin{equation*}
%\lambda_{p}^{'}(M_{\Omega}(b))\le%q\lambda_{p}(M_{\Omega}(b)).
%\end{equation*}
%Observe that to obtain the above inequality, it is sufficient to prove that
\begin{equation}\label{eq:temp-11}
\lambda_{p}^{'}(M_{\Omega}(b))\leq\lambda_{p}(M_{\Omega}(b))+2\delta \ \ \ \
\text{for all}\ \ \delta>0.
\end{equation}

We claim that for any $\delta>0$, there exists $\varphi_{\delta}\in \chi^{++}_{\Omega}$ such that
\begin{equation*}
M_{\Omega}(b)[\varphi_{\delta}]+(\lambda_{p}(M_{\Omega}(b))+2\delta) \varphi_{\delta}\geq 0 ~~~\ \ \ \ \text{in}~~ \ \bar{\Omega}\times\mathbb{R}.
\end{equation*}
Indeed, thanks to Lemma \ref{le201} and Theorem \ref{th201}, there is $b_{\delta}\in C_{1}(\bar{\Omega}\times\mathbb{R})$ such that
\begin{equation}\label{205}
\underset{\bar{\Omega}\times\mathbb{R}}{\max}\ |b_{\delta}-b|< \delta, \  |\lambda_{p}(M_{\Omega}(b))-\lambda_{p}(M_{\Omega}(b_{\delta}))|< \delta
\end{equation}
and $\lambda_{p}(M_{\Omega}(b_{\delta}))$ is the principal eigenvalue of
$M_{\Omega}(b_{\delta})$.
Thus, there exists $\varphi_{\delta}\in \chi^{++}_{\Omega}$ such that
\begin{equation}\label{206}
M_{\Omega}(b_{\delta})[\varphi_{\delta}]+\lambda_{p}(M_{\Omega}(b_{\delta})) \varphi_{\delta}=0 ~~~\ \ \ \ \text{in}~~ \ \bar{\Omega}\times\mathbb{R}.
\end{equation}
Owing to (\ref{205}) and (\ref{206}), we get
\begin{align*}
&M_{\Omega}(b)[\varphi_{\delta}]+(\lambda_{p}(M_{\Omega}(b))+2\delta) \varphi_{\delta}\\
=&M_{\Omega}(b_{\delta})[\varphi_{\delta}]+(b(x,t)-b_{\delta}(x,t))\varphi_{\delta}
+(\lambda_{p}(M_{\Omega}(b))+2\delta) \varphi_{\delta}\\
=&(\lambda_{p}(M_{\Omega}(b))-\lambda_{p}(M_{\Omega}(b_{\delta})))\varphi_{\delta}+
(b(x,t)-b_{\delta}(x,t))\varphi_{\delta}+2\delta\varphi_{\delta}\\
\geq& -\delta\varphi_{\delta}-\delta\varphi_{\delta}+2\delta\varphi_{\delta}=0
\end{align*}
The proof of the claim is complete.

Moreover, it follows from this claim and the definition of $\lambda_{p}^{'}(M_{\Omega}(b))$ that \eqref{eq:temp-11} holds.
%\begin{equation*}
%\lambda_{p}^{'}(M_{\Omega}(b))\leq\lambda_{p}(M_{\Omega}(b))+2\delta \ \ \ \
%\text{for all}\ \ \delta>0.
%\end{equation*}
In conclusion, we obtain $\lambda_{p}(M_{\Omega}(b))=\lambda_{p}^{'}(M_{\Omega}(b))$.
\end{proof}

\subsection{Influences of the frequency}
\noindent

This subsection concerns the dependence of  $\lambda_{p}(L_{\Omega}^{\tau,\mu,\sigma,m})$ on $\tau$. Consider
%Here, we consider a general
%form of the nonlocal dispersal operators
\begin{equation*}
M_{\tau}[v](x,t):= -\tau v_{t}(x,t)+ \mu\int_{\Omega}J(x-y)v(y,t)dy
+b(x,t)v(x,t),\ \ \  (x,t)\in\bar{\Omega}\times\mathbb{R},
\end{equation*}
where $b\in C_{1}(\bar{\Omega}\times\mathbb{R})$.
Our goal is to prove the following two results, from which  the conclusions of
Theorem \ref{th12} follow:

\begin{theorem}\label{th401}
Assume that {\rm(J)} holds and $b\in C_{1}(\bar{\Omega}\times\mathbb{R})$. Then the function $\tau\mapsto\lambda_{p}(M_{\tau})$ is continuous non-decreasing on $(0,\infty)$. Moreover, if $\lambda_{p}(M_{\tau})$ is a principal eigenvalue, then the following assertions hold:
    \begin{itemize}
    \item[(i)] If $b(x,t)=\hat{b}(x)+g(t)$, then $\lambda_{p}(M_{\tau})$ is constant for $\tau>0$;
    \item[(ii)] Otherwise $\frac{\partial\lambda_{p}}{\partial\tau}
        (M_{\tau})>0$ for every $\tau>0$.
    \end{itemize}
\end{theorem}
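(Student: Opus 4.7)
The plan is first to reduce to the case where $\lambda_{p}(M_{\tau})$ is the principal eigenvalue. Given any $b\in C_{1}(\bar{\Omega}\times\mathbb{R})$ and $\epsilon>0$, Lemma \ref{le201} supplies $b_{\epsilon}$ with $\|b-b_{\epsilon}\|_{\infty}<\epsilon$ for which $\lambda_{p}(M^{b_{\epsilon}}_{\tau})$ is the principal eigenvalue, while Lemma \ref{le202} gives the uniform bound $|\lambda_{p}(M^{b}_{\tau})-\lambda_{p}(M^{b_{\epsilon}}_{\tau})|\leq\epsilon$. Hence non-decreasing monotonicity and continuity in $\tau$, once proved in the principal-eigenvalue case, pass to the limit $\epsilon\to 0$. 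I therefore assume throughout that $\lambda_{p}(M_{\tau})$ is the principal eigenvalue with positive eigenfunction $\varphi_{\tau}\in\chi_{\Omega}^{++}$; the symmetry of $J$ makes the spatial part of $M_{\tau}$ self-adjoint, so Krein--Rutman provides a positive principal eigenfunction $\varphi^{*}_{\tau}\in\chi_{\Omega}^{++}$ of the adjoint $M^{*}_{\tau}=\tau\partial_{t}+\mu\int_{\Omega}J(x-\cdot)\cdot\,dy+b\cdot$ with the same eigenvalue $\lambda_{p}(\tau)$.

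For part (i), when $b(x,t)=\hat{b}(x)+g(t)$, I would try the separable ansatz $\varphi_{\tau}(x,t)=\phi(x)\psi(t)$ with $\phi,\psi>0$. Substitution into the eigenvalue equation and division by $\phi\psi$ decouples it into
\begin{equation*}
\tau\psi'(t)/\psi(t)=g(t)-\bar{g},\qquad \mu\int_{\Omega}J(x-y)\phi(y)\,dy+\hat{b}(x)\phi(x)=-\bigl(\lambda_{p}(\tau)+\bar{g}\bigr)\phi(x),
\end{equation*}
with $\bar{g}=\int_{0}^{1}g$. The first admits the $1$-periodic positive solution $\psi(t)=\exp\bigl(\tau^{-1}\int_{0}^{t}(g-\bar{g})\bigr)$, and the second is a $\tau$-independent spatial eigenvalue problem whose principal eigenvalue equals $\lambda_{p}(\tau)+\bar{g}$. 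Consequently $\lambda_{p}(M_{\tau})$ is independent of $\tau$.

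For monotonicity and part (ii), standard perturbation theory applied to the simple isolated principal eigenvalue (equivalently, to the associated Poincar\'e map) provides $C^{1}$-dependence of $(\lambda_{p}(\tau),\varphi_{\tau})$ on $\tau$. Differentiating $M_{\tau}\varphi_{\tau}+\lambda_{p}(\tau)\varphi_{\tau}=0$ in $\tau$, pairing with $\varphi^{*}_{\tau}$, and using the adjoint equation to annihilate the $\partial_{\tau}\varphi_{\tau}$ contribution yields
\begin{equation*}
\lambda_{p}'(\tau)=\frac{\int_{0}^{1}\!\!\int_{\Omega}(\varphi_{\tau})_{t}\,\varphi^{*}_{\tau}\,dx\,dt}{\int_{0}^{1}\!\!\int_{\Omega}\varphi_{\tau}\varphi^{*}_{\tau}\,dx\,dt}.
\end{equation*}
The main obstacle is to show this is nonnegative, with strict positivity outside the separable case. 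My plan is to introduce $\chi_{\tau}=\sqrt{\varphi_{\tau}\varphi^{*}_{\tau}}$ and combine the primal/adjoint identities
\begin{equation*}
\tau(\log\varphi_{\tau})_{t}=L\varphi_{\tau}/\varphi_{\tau}+\lambda_{p}(\tau),\qquad -\tau(\log\varphi^{*}_{\tau})_{t}=L\varphi^{*}_{\tau}/\varphi^{*}_{\tau}+\lambda_{p}(\tau),
\end{equation*}
(with $L=\mu\int_{\Omega}J(x-\cdot)\cdot\,dy+b$) via the pointwise AM--GM inequality to obtain $\tfrac{1}{2}\bigl(L\varphi_{\tau}/\varphi_{\tau}+L\varphi^{*}_{\tau}/\varphi^{*}_{\tau}\bigr)\geq L\chi_{\tau}/\chi_{\tau}$. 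Integrating this against the normalized weight $\pi=\varphi_{\tau}\varphi^{*}_{\tau}/Z$ with $Z=\int\!\int\varphi_{\tau}\varphi^{*}_{\tau}$, and using the equal-averages identity $\tau\lambda_{p}'(\tau)-\lambda_{p}=\int\!\int\pi\,L\varphi_{\tau}/\varphi_{\tau}=\int\!\int\pi\,L\varphi^{*}_{\tau}/\varphi^{*}_{\tau}$ (together with the periodicity consequence $\int\!\int\pi\,(\log\chi_{\tau})_{t}=0$), reduces the problem to verifying the key bound $\int\!\int\pi\,L\chi_{\tau}/\chi_{\tau}\geq-\lambda_{p}(M_{\tau})$; this I expect to be the technical heart of the argument, provable by testing $\chi_{\tau}$ against the variational characterization of $\lambda_{p}(M_{\tau})$ supplied by Theorem \ref{th201}. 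Equality throughout forces $\varphi_{\tau}(y,t)/\varphi_{\tau}(x,t)=\varphi^{*}_{\tau}(y,t)/\varphi^{*}_{\tau}(x,t)$ on the support of $J(x-\cdot)$; the hypothesis $J(0)>0$ together with the connectedness of $\bar{\Omega}$ then forces $\varphi_{\tau}/\varphi^{*}_{\tau}$ to depend on $t$ alone, and substituting this structure back into the eigenvalue equations forces $b$ to be of the separable form $\hat{b}(x)+g(t)$. Continuity in the principal-eigenvalue case follows from the $C^{1}$-regularity above, and the general case is recovered from Lemma \ref{le202} via the approximation of Paragraph~1.
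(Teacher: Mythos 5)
Your proposal tracks the paper's structure closely: reduce to the principal-eigenvalue case via Lemmas \ref{le201}--\ref{le202}, differentiate the eigenvalue equation, pair with the adjoint eigenfunction to obtain
\begin{equation*}
\lambda_p'(\tau)=\frac{\int_0^1\!\!\int_\Omega(\varphi_\tau)_t\,\varphi_\tau^*\,dx\,dt}{\int_0^1\!\!\int_\Omega\varphi_\tau\varphi_\tau^*\,dx\,dt},
\end{equation*}
and then show this is nonnegative with equality forcing separability of $b$. Your part (i) via the separable ansatz $\varphi_\tau=\phi(x)\psi(t)$ is a legitimate alternative to the paper's multiplicative change of unknown $\phi_\tau=e^{-\frac{1}{\tau}\int_0^t g}\varphi_\tau$, and your equality analysis in part (ii) matches the paper's.

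However, there is a genuine gap at what you yourself flag as ``the technical heart.'' Your AM--GM step correctly reduces the problem to
\begin{equation*}
\int\!\!\int \pi\,\frac{M_\tau[\chi_\tau]}{\chi_\tau}\;\geq\;-\lambda_p(M_\tau),\qquad \chi_\tau=\sqrt{\varphi_\tau\varphi_\tau^*},\ \ \pi=\frac{\varphi_\tau\varphi_\tau^*}{\int\!\!\int\varphi_\tau\varphi_\tau^*},
\end{equation*}
(using that $\int\!\!\int\pi(\log\chi_\tau)_t=0$, which is fine). But you then assert this should follow from ``the variational characterization of $\lambda_p(M_\tau)$ supplied by Theorem \ref{th201}.'' Theorem \ref{th201} only says $\lambda_p=\lambda_1$, the infimum of $\operatorname{Re}\sigma(-M_\tau)$; it is not a Rayleigh-type formula, and for a non-self-adjoint operator the infimum of the spectrum does not bound a weighted average of $M_\tau[\zeta]/\zeta$ from below. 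Indeed, the quantity you want to bound is exactly $K_\tau(\chi_\tau)/Z$ in the paper's notation, and showing $K_\tau(\zeta)\geq K_\tau(\varphi_\tau)$ for $\zeta\in\chi_\Omega^{++}$ is precisely the content of the paper's Claim \ref{cl201}. That claim has a real proof: one verifies that $\varphi_\tau$ is a critical point of $K_\tau$ (via $\mathbf{D}K_\tau(\varphi_\tau)\eta=0$), takes the specific variation $\eta=\varphi_\tau\ln(\zeta/\varphi_\tau)$, and arrives at a representation of $K_\tau(\zeta)-K_\tau(\varphi_\tau)$ as an integral of $J(x-y)\,\varphi_\tau\psi_\tau\,\frac{\varphi_\tau(y)}{\varphi_\tau(x)}\,f\!\left(\frac{\zeta(y)\varphi_\tau(x)}{\zeta(x)\varphi_\tau(y)}\right)$ with $f(z)=z-1-\ln z\geq 0$. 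Without this computation (or an equivalent Donsker--Varadhan type argument for the weighted average), your proof is incomplete. Note also that introducing $\chi_\tau$ via AM--GM does not make the needed inequality any easier: testing $K_\tau$ at $\chi_\tau$ is no more tractable than testing it at $\psi_\tau$, which is what the paper does directly. The missing ingredient is precisely Claim \ref{cl201}, and you should supply a proof of it (or of the restricted version at $\zeta=\chi_\tau$) rather than appeal to Theorem \ref{th201}.
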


\begin{proof}
The proof is divided into two cases.

\textbf{Case 1.} We prove the result under the additional assumption that $\lambda_{p}(M_{\tau})$ is a principal eigenvalue for all $\tau>0$. By the
definition of the principal eigenvalue, there exists $\varphi_{\tau}\in \chi_{\Omega}^{++}$ s.t. %such that
\begin{equation}\label{401}
\begin{cases}
M_{\tau}[\varphi_{\tau}](x,t)=-\tau \partial_{t}\varphi_{\tau}+\mu \int_{\Omega}J(x-y)\varphi_{\tau}(y,t)dy+b(x,t)\varphi_{\tau}=
-\lambda_{p}(M_{\tau})\varphi_{\tau}& \text{ in }\ \bar{\Omega}\times[0,1],\\
\varphi_{\tau}(x,1)=\varphi_{\tau}(x,0)&
\text{ in }\ \bar{\Omega}.
\end{cases}
\end{equation}
Note that there is $\psi_{\tau}\in \chi_{\Omega}^{++}$ such that $\psi_{\tau}$ satisfies the adjoint problem of \eqref{401}
\begin{equation}\label{402}
\begin{cases}
M_{\tau}^{*}[\psi_{\tau}](x,t):=\tau \partial_{t}\psi_{\tau}+\mu \int_{\Omega}J(x-y)\psi_{\tau}(y,t)dy+b(x,t)\psi_{\tau}=
-\lambda_{p}(M_{\tau})\psi_{\tau}& \text{ in }\ \bar{\Omega}\times[0,1],\\
\psi_{\tau}(x,1)=\psi_{\tau}(x,0)&
\text{ in }\ \bar{\Omega}.
\end{cases}
\end{equation}
For convenience, we denote $\mathcal{C}:=\Omega\times(0,1)$. We normalize $\varphi_{\tau}$ and $\psi_{\tau}$ such that $\int_{\mathcal{C}}\varphi_{\tau}^{2}=
\int_{\mathcal{C}}\varphi_{\tau}\psi_{\tau}=1$ for any $\tau>0$.

A family of closed operators $\{M_{\tau}\}_{\tau>0}$ is a holomorphic family by \cite[Charpter 7, Section 2.1]{Kato-1995}.
As $\lambda_{p}(M_{\tau})$ is an isolated eigenvalue, the continuous differentiability of $\tau \mapsto \big(\lambda_{p}(M_{\tau}),\varphi_{\tau}\big)$ follows from
the classical perturbation theory in \cite[Charpter 7, Section 6.2]{Kato-1995}.
We can differentiate the equation \eqref{401} with respect to $\tau$ to find
\begin{equation*}
\begin{cases}
-\partial_{t}\varphi_{\tau}+M_{\tau}[\varphi_{\tau}']=
-\frac{\partial \lambda_{p}(M_{\tau})}{\partial \tau}\varphi_{\tau}-\lambda_{p}(M_{\tau})\varphi_{\tau}' &~~~\text{ in }\ \bar{\Omega}\times[0,1],\\
\varphi_{\tau}'(x,1)=\varphi_{\tau}'(x,0)&
~~~\text{ in }\ \bar{\Omega}.
\end{cases}
\end{equation*}
Multiplying the above equation by $\psi_{\tau}$ and integrating the resulting equation over $\mathcal{C}$, we obtain
\begin{equation*}
-\int_{\mathcal{C}}\psi_{\tau}\partial_{t}\varphi_{\tau}+\int_{\mathcal{C}}
M_{\tau}[\varphi_{\tau}']\psi_{\tau}=-\frac{\partial \lambda_{p}(M_{\tau})}
{\partial \tau}\int_{\mathcal{C}}\varphi_{\tau}\psi_{\tau} -\lambda_{p}(M_{\tau})\int_{\mathcal{C}}\varphi_{\tau}'\psi_{\tau}.
\end{equation*}
By the adjoint problem \eqref{402} and the normalization $\int_{\mathcal{C}}\varphi_{\tau}\psi_{\tau}=1$, we find that
\begin{equation*}
\frac{\partial\lambda_{p}(M_{\tau})}{\partial\tau}=
\int_{\mathcal{C}}\psi_{\tau}\partial_{t}\varphi_{\tau}.
\end{equation*}
Due to the definition of $M_{\tau}$ and $M_{\tau}^{*}$, we derive
\begin{align*}
\int_{\mathcal{C}}\psi_{\tau}\partial_{t}\varphi_{\tau}&=\frac{1}{2\tau}
\int_{\mathcal{C}}\psi_{\tau}\big(M_{\tau}^{*}[\varphi_{\tau}]-
M_{\tau}[\varphi_{\tau}]\big)\\
&=\frac{1}{2\tau}
\int_{\mathcal{C}}\big(\varphi_{\tau}M_{\tau}[\psi_{\tau}]-
\psi_{\tau}M_{\tau}[\varphi_{\tau}]\big)\\
&=\frac{1}{2\tau}
\bigg(K_{\tau}(\psi_{\tau})-K_{\tau}(\varphi_{\tau})\bigg),
\end{align*}
where functional $K_{\tau}$ is defined by
\begin{equation*}
K_{\tau}(\zeta):=\int_{\mathcal{C}}\varphi_{\tau}\psi_{\tau}
\bigg(\frac{M_{\tau}[\zeta]}{\zeta}\bigg),\ \ \ \ \zeta\in\chi_{\Omega}^{++}.
\end{equation*}
We claim that
\begin{claim}\label{cl201}
For any $\zeta\in\chi_{\Omega}^{++}$, we have
\begin{equation*}
K_{\tau}(\zeta)-K_{\tau}(\varphi_{\tau})\geq0.
\end{equation*}
\end{claim}
Assume for the moment that the claim holds true, then
it implies that
\begin{equation}\label{406}
\frac{\partial\lambda_{p}(M_{\tau})}{\partial\tau}=\frac{1}{2\tau}
\bigg(K_{\tau}(\psi_{\tau})-K_{\tau}(\varphi_{\tau})\bigg)\geq0
\ \ \ \ \text{for all } \tau>0.
\end{equation}

It remains to prove parts ($i$) and ($ii$). When $b(x,t)=\hat{b}(x)+g(t)$ for some 1-periodic function $g(t)$,
we set $\phi_{\tau}(x,t):=e^{-\frac{1}{\tau}\int_{0}^{t}g(s)ds}\varphi_{\tau}(x,t)$,
which satisfies
\begin{equation*}
\begin{cases}
-\tau \partial_{t}\phi_{\tau}+\mu \int_{\Omega}J(x-y)\phi_{\tau}(y,t)dy+\hat{b}(x)\phi_{\tau}=
-\lambda_{p}(M_{\tau})\varphi_{\tau}&\ \ \ \text{ in }\ \bar{\Omega}\times[0,1],\\
\phi_{\tau}(x,1)=\phi_{\tau}(x,0)&\ \ \ \text{ in }\ \bar{\Omega}.
\end{cases}
\end{equation*}
It is clear that $\lambda_{p}(M_{\tau})$ is constant for $\tau>0$. This
proves part ($i$).

Finally, we show that $\frac{\partial\lambda_{p}(M_{\tau})}{\partial\tau}>0$ for every $\tau>0$ if $b(x,t)$ does not take the form of $b(x,t)=\hat{b}(x)+g(t)$.
Suppose that there is some $\tau_{0}>0$ such that
$\frac{\partial\lambda_{p}(M_{\tau_{0}})}{\partial\tau}=0$. According to
the formula \eqref{407} and $J(0)>0$, we obtain
\begin{equation*}
\frac{\varphi_{\tau_{0}}(x,t)}{\psi_{\tau_{0}}(x,t)}\cdot
\frac{\psi_{\tau_{0}}(y,t)}{\varphi_{\tau_{0}}(y,t)}\equiv1
\ \ \ \text{for each } x,y\in\bar{\Omega},\ t\in[0,1].
\end{equation*}
Thus, we have $\varphi_{\tau_{0}}=c(t)\psi_{\tau_{0}}$ for some 1-periodic function $c(t)>0$. Substituting $\varphi_{\tau_{0}}=c(t)\psi_{\tau_{0}}$ into $M_{\tau_{0}}[\varphi_{\tau_{0}}]=-\lambda_{p}(M_{\tau_{0}})
\varphi_{\tau_{0}}$ and using $M_{\tau_{0}}^{*}[\psi_{\tau_{0}}]=-\lambda_{p}(M_{\tau_{0}})
\psi_{\tau_{0}}$, we deduce that
\begin{equation*}
c'(t)\psi_{\tau_{0}}+2c(t)\partial_{t}\psi_{\tau_{0}}=0.
\end{equation*}
It then follows that $\partial_{t} ln\psi_{\tau_{0}}=-\frac{c'(t)}{2c(t)}$ in $\mathcal{C}$, which depends only on $t$. Hence, $\psi_{\tau_{0}}$ is of the form
$\psi_{\tau_{0}}=X_{\tau_{0}}(x)T_{\tau_{0}}(t)$ with some 1-periodic
function $T_{\tau_{0}}(t)>0$ in $[0,1]$ and function $X_{\tau_{0}}(x)>0$
in $\bar{\Omega}$. By $M_{\tau_{0}}^{*}[\psi_{\tau_{0}}]=-\lambda_{p}(M_{\tau_{0}})$, we have
\begin{equation*}
\tau_{0}\frac{T_{\tau_{0}}'(t)}{T_{\tau_{0}}(t)}+
\frac{\mu\int_{\Omega}J(x-y)X_{\tau_{0}}(y)dy}{X_{\tau_{0}}(x)}+b(x,t)
=-\lambda_{p}(M_{\tau_{0}}).
\end{equation*}
Thus, it is necessary that $b$ has the form of $b(x,t)=\hat{b}(x)+g(t)$,
which contradicts the previous assumption. This completes the proof of part ($ii$).

\textbf{Case 2.} If $\lambda_{p}(M_{\tau})$ is not the principal eigenvalue for some $\tau>0$, then we can use an approximation argument.
More precisely, applying Lemma \ref{le201} and Theorem \ref{th201}, we find that for each $\epsilon>0$, there exists $b_{\epsilon}\in C_{1}(\bar{\Omega}\times\mathbb{R})$ such that
\begin{equation*}
\underset{\bar{\Omega}\times\mathbb{R}}{\max}\ |b_{\epsilon}-b|< \epsilon, \
|\lambda_{p}(M_{\tau}(b_{\epsilon}))-\lambda_{p}(M_{\tau}(b_{\epsilon}))|<\epsilon
\end{equation*}
and $\lambda_{p}(M_{\tau}(b_{\epsilon}))$ is the principal eigenvalue for all $\tau>0$, where $M_{\tau}(b_{\epsilon})$
is $M_{\tau}(b)$ with $b$ being replaced by $b_{\epsilon}$.
We then apply \textbf{Case 1} to conclude that for each $\epsilon>0$,
the function $\tau\mapsto\lambda_{p}(M_{\tau}(b_{\epsilon}))$ is continuous non-decreasing on $(0,\infty)$, i.e., for every $\tau_{0}>0$,
there exists $\delta_{0}>0$ such that for all $|\tau-\tau_{0}|<\delta
_{0}$, we have
\begin{equation*}
\big|\lambda _{p}(M_{\tau}(b_{\epsilon}))-\lambda _{p}(M_{\tau_{0}}(b_{\epsilon}))\big|<\epsilon.
\end{equation*}
By Lemma \ref{le202}, $\lambda _{p}(M_{\tau}(b))$
is Lipschitz continuous with respect to $b$, i.e.,%
\begin{equation*}
\big|\lambda _{p}(M_{\tau}(b))-\lambda _{p}(M_{\tau}(b_{\epsilon}))\big|\leq \underset{t\in[0,1]}{\sup}\
\|b(\cdot,t)-b_{\epsilon }(\cdot,t)\|_{\infty}<
\epsilon.
\end{equation*}

%In a word,
Hence, for every given constant $\epsilon >0$, there exist $\delta
_{0}>0 $ and $b_{\epsilon }\in C_{1}(\bar{\Omega }\times \mathbb{R})$ such that for all $%
|\tau -\tau_{0}|<\delta _{0}$, we have%
\begin{align*}
&|\lambda _{p}(M_{\tau_{0}}(b))-\lambda _{p}(M_{\tau}(b))| \\
\leq &|\lambda _{p}(M_{\tau_{0}}(b))-\lambda _{p}(M_{\tau_{0}}(b_{\epsilon }))|+|\lambda _{p}(M_{\tau_{0}}(b_{\epsilon }))-\lambda _{p}(M_{\tau}(b_{\epsilon }))| %\\
%&
+|\lambda _{p}(M_{\tau}(b))-\lambda _{p}(M_{\tau}(b_{\epsilon }))| \\
<&\epsilon+\epsilon+\epsilon=3\epsilon,
\end{align*}%
which implies that $\lambda _{p}(M_{\tau}(b))$ is continuous with respect to $\tau$. Thus, the function $\tau\mapsto\lambda_{p}(M_{\tau}(b))$ is continuous non-decreasing on $(0,\infty)$. The proof  is complete.
\end{proof}

\begin{proof}[Proof of  Claim {\rm\ref{cl201}}]
First, we claim that $\varphi_{\tau}$ is a critical point of $K_{\tau}$ in the sense that
\begin{equation}\label{400}
\mathbf{D} K_{\tau}(\varphi_{\tau})\eta=0 \ \ \ \ \text{for all } \eta\in\chi_{\Omega},
\end{equation}
where $\mathbf{D}K_{\tau}(\varphi_{\tau})$ is the Fr\'{e}chet derivative of $K_{\tau}$ at the point $\varphi_{\tau}\in\chi_{\Omega}^{++}$.

For any $\eta\in \chi_{\Omega}$, we have
\begin{equation*}
\mathbf{D} K_{\tau}(\varphi_{\tau})\eta=
\int_{\mathcal{C}}\varphi_{\tau}\psi_{\tau}
\bigg(\frac{M_{\tau}[\eta]}{\varphi_{\tau}}-
\frac{M_{\tau}[\varphi_{\tau}]\eta}{\varphi_{\tau}^{2}}\bigg).
\end{equation*}

On  one hand, it follows from $M_{\tau}[\varphi_{\tau}]=-\lambda_{p}\varphi_{\tau}$ and $M_{\tau}^{*}[\psi_{\tau}]=-\lambda_{p}\psi_{\tau}$ that
\begin{equation}\label{405}
\begin{split}
\mathbf{D} K_{\tau}(\varphi_{\tau})\eta&=
\int_{\mathcal{C}}\bigg(\psi_{\tau}M_{\tau}[\eta]-
\frac{M_{\tau}[\varphi_{\tau}]\psi_{\tau}\eta}{\varphi_{\tau}}\bigg)\\
&=\int_{\mathcal{C}}\bigg(M_{\tau}^{*}[\psi_{\tau}]\eta-
\frac{M_{\tau}[\varphi_{\tau}]\psi_{\tau}\eta}{\varphi_{\tau}}\bigg)\\
&=0.
\end{split}
\end{equation}
On the other hand, a simple calculation yields
\begin{equation}\label{403}
\begin{split}
&\mathbf{D} K_{\tau}(\varphi_{\tau})\eta\\
=&
\int_{\mathcal{C}}\varphi_{\tau}\psi_{\tau}
\bigg(\frac{M_{\tau}[\eta]}{\varphi_{\tau}}-
\frac{M_{\tau}[\varphi_{\tau}]\eta}{\varphi_{\tau}^{2}}\bigg)\\
=&
\int_{\mathcal{C}}\varphi_{\tau}\psi_{\tau}
\frac{-\tau\big(\eta_{t}\varphi_{\tau}-(\varphi_{\tau})_{t}\eta\big)+
\mu\int_{\Omega}J(x-y)\big[\eta(y,t)\varphi_{\tau}(x,t)-
\varphi_{\tau}(y,t)\eta(x,t)\big]dy}
{\varphi_{\tau}^{2}}.
\end{split}
\end{equation}

A direct calculation shows that
\begin{align*}
&K_{\tau}(\zeta)-K_{\tau}(\varphi_{\tau})\\
=&\int_{\mathcal{C}}\varphi_{\tau}
\psi_{\tau}\bigg(\frac{M_{\tau}[\zeta]}{\zeta}-\frac{M_{\tau}[\varphi_{\tau}]}
{\varphi_{\tau}}\bigg)\\
=&\int_{\mathcal{C}}\varphi_{\tau}\psi_{\tau}
\bigg(\frac{-\tau\zeta_{t}}{\zeta}+\frac{\tau(\varphi_{\tau})_{t}}{\varphi_{\tau}}\bigg)+
\mu\int_{\mathcal{C}}\varphi_{\tau}\psi_{\tau}\int_{\Omega}J(x-y)\bigg(
\frac{\zeta(y,t)}{\zeta(x,t)}-
\frac{\varphi_{\tau}(y,t)}{\varphi_{\tau}(x,t)}\bigg)dy.
\end{align*}
Taking $\eta=\varphi_{\tau} ln\Big(\frac{\zeta}{\varphi_{\tau}}\Big)$ in \eqref{403}, we obtain
\begin{equation}\label{404}
\begin{split}
&\mathbf{D} K_{\tau}(\varphi_{\tau})\eta\\
=&\int_{\mathcal{C}}\varphi_{\tau}\psi_{\tau}
\bigg(\frac{-\tau\zeta_{t}}{\zeta}+\frac{\tau(\varphi_{\tau})_{t}}
{\varphi_{\tau}}\bigg)+\mu\int_{\mathcal{C}}\varphi_{\tau}\psi_{\tau}
\int_{\Omega}J(x-y)\frac{\varphi_{\tau}(y,t)}{\varphi_{\tau}(x,t)} ln\bigg(\frac{\zeta(y,t)\varphi_{\tau}(x,t)}{\zeta(x,t)\varphi_{\tau}(y,t)}
\bigg)dy.
\end{split}
\end{equation}
By formulas \eqref{405} and \eqref{404}, we have
\begin{equation}\label{407}
\begin{split}
&K_{\tau}(\zeta)-K_{\tau}(\varphi_{\tau})\\
=&\mathbf{D} K_{\tau}(\varphi_{\tau})\eta+\mu\int_{\mathcal{C}}\varphi_{\tau}\psi_{\tau}
\int_{\Omega}J(x-y)\frac{\varphi_{\tau}(y,t)}{\varphi_{\tau}(x,t)} \bigg[\frac{\zeta(y,t)\varphi_{\tau}(x,t)}{\zeta(x,t)\varphi_{\tau}(y,t)}-1-
ln\bigg(\frac{\zeta(y,t)\varphi_{\tau}(x,t)}{\zeta(x,t)\varphi_{\tau}(y,t)}
\bigg)\bigg]dy\\
=&\mu\int_{\mathcal{C}}\varphi_{\tau}\psi_{\tau}
\int_{\Omega}J(x-y)\frac{\varphi_{\tau}(y,t)}{\varphi_{\tau}(x,t)} \bigg[\frac{\zeta(y,t)\varphi_{\tau}(x,t)}{\zeta(x,t)\varphi_{\tau}(y,t)}-1-
ln\bigg(\frac{\zeta(y,t)\varphi_{\tau}(x,t)}{\zeta(x,t)\varphi_{\tau}(y,t)}
\bigg)\bigg]dy.
\end{split}
\end{equation}

Define
\begin{equation*}
f(z)=z-1-lnz,\ \ \ z\in(0,\infty).
\end{equation*}
%We easily know that
%\begin{equation*}
As $f(z)\geq0$ %,\ \ \ z\in(0,\infty)
%\end{equation*}
and $f(z)=0$ if and only if $z=1$, thus we obtain
%\begin{equation*}
$K_{\tau}(\zeta)-K_{\tau}(\varphi_{\tau})\geq0$.
%\end{proof}
%\end{equation*}
\end{proof}

\begin{theorem}\label{th402}
Assume that {\rm(J)} holds and $b\in C_{1}^{0,1}(\bar{\Omega}\times\mathbb{R})$. Then the followings %properties
hold:
\begin{itemize}
\item[(i)] There holds
\begin{equation*}
\lim\limits_{\tau\rightarrow0^{+}}\lambda_{p}(M_{\tau})=  \int_{0}^{1}\lambda_{p}(N_{\Omega}^{t})dt.
\end{equation*}
Here, for each fixed $t\in[0,1]$, $\lambda_{p}(N_{\Omega}^{t})$ is the generalised principal eigenvalue of the operator $N_{\Omega}^{t}$
\begin{equation*}
N_{\Omega}^{t}[v](x):=\mu\int_{\Omega}J_{\sigma}(x-y)v(y)dy
+b(x,t)v(x);
\end{equation*}
\item[(ii)] There holds
\begin{equation*}
\lim\limits_{\tau\rightarrow\infty}\lambda_{p}(M_{\tau})=  \lambda_{p}(N_{\Omega}),
\end{equation*}
where $\lambda_{p}(N_{\Omega})$ is the generalised principal eigenvalue of the operator $N_{\Omega}$
\begin{equation*}
N_{\Omega}[v](x):=\mu\int_{\Omega}J_{\sigma}(x-y)v(y)dy
+\hat{b}(x)v(x).
\end{equation*}
\end{itemize}
\end{theorem}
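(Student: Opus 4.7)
The plan is to establish both limits by constructing explicit near-eigenfunctions of $M_\tau$, then converting the resulting sub- and super-solution estimates into two-sided bounds on $\lambda_p(M_\tau)$ via the identity $\lambda_p = \lambda_p'$ proved in Theorem \ref{th202}. Monotonicity and continuity in $\tau$ from Theorem \ref{th401} already imply that both one-sided limits exist in $[-\infty,\infty]$; the task is to identify them. In each part I will first carry out the construction under the additional assumption that the limiting operator admits a principal eigenvalue, and then pass to the general case through the approximation scheme of Lemma \ref{le201} combined with the Lipschitz bound of Lemma \ref{le202}.

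For part (ii), after approximation assume $N_\Omega$ admits a principal eigenfunction $\phi^* \in C(\bar\Omega)$ with $\phi^*>0$ satisfying $\hat b(x)\phi^*(x) + \mu\int_\Omega J(x-y)\phi^*(y)\,dy = -\lambda_p(N_\Omega)\phi^*(x)$. I introduce the primitive
\[
\beta(x,t) := \int_0^t \bigl[b(x,s) - \hat b(x)\bigr]\, ds,
\]
which is 1-periodic in $t$ and uniformly bounded, and test with $v_\tau(x,t) := \phi^*(x)\exp\bigl(\beta(x,t)/\tau\bigr)$; the factor is chosen so that $-\tau\partial_t v_\tau$ exactly cancels the $(b-\hat b)v_\tau$ contribution. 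A direct computation yields
\[
M_\tau v_\tau + \lambda_p(N_\Omega)\,v_\tau = \mu\, e^{\beta(x,t)/\tau}\int_\Omega J(x-y)\,\phi^*(y)\,\bigl[e^{(\beta(y,t)-\beta(x,t))/\tau} - 1\bigr]\,dy,
\]
whose absolute value is $O(1/\tau)$ uniformly by the elementary bound $|e^s-1|\leq |s|e^{|s|}$ applied to $s=(\beta(y,t)-\beta(x,t))/\tau$. For any $\delta>0$, this yields $M_\tau v_\tau + (\lambda_p(N_\Omega)-\delta)v_\tau \leq 0$ and $M_\tau v_\tau + (\lambda_p(N_\Omega)+\delta)v_\tau \geq 0$ once $\tau$ is sufficiently large, whence $|\lambda_p(M_\tau)-\lambda_p(N_\Omega)|\leq \delta$.

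For part (i), after approximation assume each $N_\Omega^t$ admits a principal eigenfunction $\phi_t^*(x)$ with eigenvalue $\lambda(t):=\lambda_p(N_\Omega^t)$, normalized so that $\min_{\bar\Omega}\phi_t^*\geq c>0$ uniformly in $t$. Under $b\in C_1^{0,1}$, standard perturbation theory for the simple isolated eigenvalue yields that $t\mapsto \phi_t^*$ is $C^1$ into $C(\bar\Omega)$ with $\|\partial_t\phi_t^*\|_\infty$ uniformly bounded. Setting $L:=\int_0^1 \lambda(t)\,dt$ and
\[
v_\tau(x,t) := \phi_t^*(x)\,\exp\Bigl(\tfrac{1}{\tau}\int_0^t[\lambda(s)-L]\,ds\Bigr),
\]
the choice of $L$ makes the exponent 1-periodic in $t$. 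Substituting and using $\mu\int_\Omega J(x-y)\phi_t^*(y)\,dy + b(x,t)\phi_t^*(x) = -\lambda(t)\phi_t^*(x)$ collapses the computation to
\[
M_\tau v_\tau + L v_\tau = -\tau\,\partial_t\phi_t^*(x)\,\exp\Bigl(\tfrac{1}{\tau}\int_0^t[\lambda(s)-L]\,ds\Bigr),
\]
which is $O(\tau)$ uniformly. The lower bound $\phi_t^*\geq c$ then lets me absorb this into a $\mp\delta v_\tau$ term for $\tau$ small, giving $|\lambda_p(M_\tau)-L|\leq \delta$.

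The main obstacle is the approximation step. For part (ii) it is fairly routine: Lemma \ref{le201} produces $b_\epsilon$ close to $b$ in sup norm so that the time-averaged operator built from $b_\epsilon$ admits a principal eigenfunction, and Lemma \ref{le202} transfers the resulting limit back to the original $b$ with an $\epsilon$-error that vanishes as $\epsilon\to 0$. For part (i) it is subtler because one needs $N_\Omega^t(b_\epsilon)$ to admit a principal eigenvalue \emph{for each} $t\in[0,1]$, together with the required $C^1$-regularity of $\phi_t^*$ in $t$. Once such an approximating family $b_\epsilon\in C_1^{0,1}$ is in hand, the convergence $\int_0^1 \lambda_p(N_\Omega^t(b_\epsilon))\,dt \to \int_0^1 \lambda_p(N_\Omega^t)\,dt$ follows by dominated convergence from the pointwise-in-$t$ Lipschitz bound of Lemma \ref{le202}.
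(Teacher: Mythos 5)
Your part (i) is essentially the paper's argument, with one sign slip in the exponential weight: the exponent should be $-\tfrac{1}{\tau}\int_0^t[\lambda(s)-L]\,ds$, not $+\tfrac{1}{\tau}\int_0^t[\lambda(s)-L]\,ds$. With the plus sign as written, the computation does not collapse to $-\tau(\partial_t\phi_t^*)\rho(t)$; an extra $O(1)$ term $-2(\lambda(t)-L)v_\tau$ survives, which does not vanish as $\tau\to 0^+$. Flipping the sign matches the paper's choice $\rho(t)=e^{\tfrac{1}{\tau}[tL-\int_0^t\lambda(s)\,ds]}$ and then the rest of your argument goes through as the paper's does.

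Your part (ii) is a genuinely different route. The paper normalizes the eigenfunctions $\varphi_{\tau_n}$ of $M_{\tau_n}$, derives an $L^2$ bound on $\partial_t\varphi_{\tau_n}$ by testing the eigenvalue equation against $\partial_t\varphi_{\tau_n}$, and extracts a weak limit which is forced to be $t$-independent and a positive eigenfunction of $N_\Omega$. You instead start from the limiting operator: take the eigenfunction $\phi^*$ of $N_\Omega$ and build the explicit near-eigenfunction $v_\tau(x,t)=\phi^*(x)e^{\beta(x,t)/\tau}$ with $\beta(x,t)=\int_0^t[b(x,s)-\hat b(x)]\,ds$, chosen so that $-\tau\partial_t v_\tau$ cancels $(b-\hat b)v_\tau$; the residual is $O(1/\tau)$, and the two-sided bound on $\lambda_p(M_\tau)$ follows from the definitions of $\lambda_p$, $\lambda_p'$ and Theorem \ref{th202}. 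This avoids the energy estimate and the weak compactness argument entirely and is arguably cleaner. The trade-off is that your approximation step must produce a principal eigenfunction for the time-\emph{independent} operator $N_\Omega$, which Lemma \ref{le201} (concerning the time-periodic $M_\Omega$) does not directly give; you should instead perturb $\hat b$ by a time-independent $\tilde b_\epsilon$, via the time-independent analogue of Lemma \ref{le201}, and set $b_\epsilon:=b-\hat b+\tilde b_\epsilon$ so that $\hat b_\epsilon=\tilde b_\epsilon$. Your caution about the approximation step for part (i) --- needing $N_\Omega^t(b_\epsilon)$ to admit a principal eigenvalue for every $t\in[0,1]$ with $C^1$-in-$t$ regularity --- is well placed; the paper's Case~2 for (i) also disposes of this only by reference to the scheme of Theorem \ref{th401}, which strictly speaking supplies a principal eigenvalue for $M_\tau$ rather than for each frozen-time operator $N_\Omega^t$.
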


\begin{proof}
(i) The proof is divided into two cases.

\textbf{Case 1.} We prove the result under the additional assumption that $\lambda_{p}(N_{\Omega}^{t})$ is a principal eigenvalue for all $t\in[0,1]$. For fixed $t\in[0,1]$, there is $v(\cdot,t)\in C(\bar{\Omega})$ and $v(\cdot,t)>0$ in $\bar{\Omega}$ s.t. % such that
\begin{equation}\label{408}
N^{t}_{\Omega}[v](x,t)+\lambda_{p}(N_{\Omega}^{t})v(x,t)=0\ \ \ \text{in }\ \bar{\Omega}.
\end{equation}
It follows from the %classical
perturbation theory \cite[Charpter 7, Section 6.2]{Kato-1995} that $v\in C^{1}([0,1];C(\bar{\Omega}))$ and $v(x,t+1)=v(x,t)$.

Define $\varphi(x,t)=\rho(t)v(x,t)$ for 1-periodic function
\begin{equation*}
\rho(t)=e^{\frac{1}{\tau}\big[t\int_{0}^{1}\lambda_{p}(N_{\Omega}^{s})ds
-\int_{0}^{t}\lambda_{p}(N_{\Omega}^{s})ds\big]}.
\end{equation*}
Given arbitrary $\epsilon_{0}>0$, there is sufficiently small $\tau_{0}>0$ such that $\tau|\partial_{t}v|\leq\epsilon_{0} v$ for all $\tau\leq\tau_{0}$. Moreover, a direct calculation yields
\begin{equation*}
%\begin{split}
M_{\tau}[\varphi]+\bigg(\int_{0}^{1}\lambda_{p}
(N_{\Omega}^{t})dt-\epsilon_{0}\bigg)\varphi%\\
\leq-\tau\varphi_{t}+N_{\Omega}^{t}[\varphi]+\bigg(\int_{0}^{1}\lambda_{p}
(N_{\Omega}^{t})dt-\epsilon_{0}\bigg)\varphi
\leq 0
%\\
%\leq&0.
%\end{split}
\end{equation*}
By the definition of $\lambda_{p}(M_{\tau})$, we know that
\begin{equation}\label{409}
\int_{0}^{1}\lambda_{p}(N_{\Omega}^{t})dt-\epsilon_{0}
\leq\lambda_{p}(M_{\tau})\ \ \ \ \text{for all } \tau\leq\tau_{0}.
\end{equation}

In a similar manner, we obtain
\begin{equation*}
%\begin{split}
M_{\tau}[\varphi]+\bigg(\int_{0}^{1}\lambda_{p}
(N_{\Omega}^{t})dt+\epsilon_{0}\bigg)\varphi%\\
\geq-\tau\varphi_{t}+N_{\Omega}^{t}[\varphi]+\bigg(\int_{0}^{1}\lambda_{p}
(N_{\Omega}^{t})dt+\epsilon_{0}\bigg)\varphi%\\
\geq0.
%\end{split}
\end{equation*}
By the definition of $\lambda_{p}'(M_{\tau})$, we know that
\begin{equation}\label{410}
\lambda_{p}'(M_{\tau})\leq\int_{0}^{1}\lambda_{p}(N_{\Omega}^{t})dt
+\epsilon_{0}\ \ \ \ \text{for all } \tau\leq\tau_{0}.
\end{equation}

Combining Theorem \ref{th202} and  inequalities \eqref{409}, \eqref{410}, we obtain
\begin{equation*}
\lim\limits_{\tau\rightarrow0^{+}}\lambda_{p}(M_{\tau})=  \int_{0}^{1}\lambda_{p}(N_{\Omega}^{t})dt.
\end{equation*}

\textbf{Case 2.} If $\lambda_{p}(N_{\Omega}^{t})$ is not a principal eigenvalue for some $t\in[0,1]$, then we can use similar approximation argument as in \textbf{Case 2} in the proof of Theorem \ref{th401}
to deduce the result.%\\

\medskip
(ii) The proof is also divided into two cases.

\textbf{Case 1.} We prove the result under the additional assumption that $\lambda_{p}(M_{\tau})$ is a principal eigenvalue for all $\tau>0$.
Choose a sequence of $\{\tau_{n}\}_{n=1}^{\infty}$ such that $\tau_{n}\rightarrow+\infty$ and let the eigenpairs
 $(\lambda_{p}(M_{\tau_{n}}),\varphi_{\tau_{n}})$ be defined by
\begin{equation}\label{411}
\begin{cases}
-\tau_{n}\partial_{t}\varphi_{\tau_{n}}+\mu\int_{\Omega}J(x-y)
\varphi_{\tau_{n}}(y,t)dy+b(x,t)\varphi_{\tau_{n}}+\lambda_{p}(M_{\tau_{n}})
\varphi_{\tau_{n}}=0\ \  \ \text{in }\ \bar{\Omega}\times[0,1],\\
\varphi_{\tau_{n}}\in\chi_{\Omega}^{++}, \quad ||\varphi_{\tau_{n}}||_{L^{2}(\Omega\times(0,1))}=1.
\end{cases}
\end{equation}

Multiplying equation \eqref{411} by $\varphi_{\tau_{n}}$ and integrating
over $\Omega\times(0,1)$, we get
\begin{equation*}
\begin{split}
\lambda_{p}(M_{\tau_{n}})&=\mu\int_{0}^{1}\int_{\Omega}\int_{\Omega}J(x-y)
\varphi_{\tau_{n}}(y,t)\varphi_{\tau_{n}}(x,t)dydxdt+
\int_{0}^{1}\int_{\Omega}b(x,t)\varphi_{\tau_{n}}^{2}(x,t)dxdt\\
&\leq\mu |\Omega| \underset{\mathbb{R^{N}}}{\max} J +\underset{\bar{\Omega}\times[0,1]}{\max} |b|.
\end{split}
\end{equation*}
Owing to the monotone non-decreasing of $\lambda_{p}(M_{\tau})$ on $\tau>0$, one gets
\begin{equation*}
\lim\limits_{n\rightarrow\infty}\lambda_{p}(M_{\tau_{n}})=\lambda_{p}
^{\infty}.
\end{equation*}

Multiplying equation \eqref{411} by $\partial_{t}\varphi_{\tau_{n}}$ and integrating over $\Omega\times(0,1)$ yield
\begin{equation*}
\begin{split}
\tau_{n}\int_{0}^{1}\int_{\Omega}|\partial_{t}\varphi_{\tau_{n}}|^{2}dxdt
&=\mu\int_{0}^{1}\int_{\Omega}\int_{\Omega}J(x-y)
\varphi_{\tau_{n}}(y,t)\partial_{t}\varphi_{\tau_{n}}(x,t)dydxdt\\
&\ \ \ +\int_{0}^{1}\int_{\Omega}\big[b(x,t)+\lambda_{p}(M_{\tau_{n}})\big]\varphi_{\tau_{n}}(x,t)
\partial_{t}\varphi_{\tau_{n}}(x,t)dxdt\\
&=\frac{1}{2}\int_{0}^{1}\int_{\Omega}\partial_{t}b(x,t)
\varphi_{\tau_{n}}^{2}(x,t)dxdt\\
&\leq\frac{1}{2}\underset{\bar{\Omega}\times[0,1]}{\max} |\partial_{t} b|,
\end{split}
\end{equation*}
which implies that
\begin{equation*}
||\partial_{t}\varphi_{\tau_{n}}||_{L^{2}(\Omega\times(0,1))}\rightarrow0
\ \ \ \text{as}\ \ n\rightarrow\infty.
\end{equation*}
Due to the above result and  $||\varphi_{\tau_{n}}||_{L^{2}}=1$, up to extraction,
there exists $w\in W^{1,2}((0,1);
L^{2}(\Omega))$ such that
\begin{equation*}
\varphi_{\tau_{n}}\rightharpoonup w \ \ \text{and}\ \ \partial_{t}\varphi_{\tau_{n}}\rightharpoonup \partial_{t}w.
\end{equation*}
Moreover, we have $||\partial_{t}w||_{L^{2}(\Omega\times(0,1))}\leq
\liminf\limits_{\tau\rightarrow0^{+}}||\partial_{t}\varphi_{\tau_{n}}||
_{L^{2}(\Omega\times(0,1))}=0$ and thus $w$ does not depend on $t$.

Passing to the limit $n\rightarrow\infty$ in \eqref{411}, we find that
$w$ is a weak solution of the  equation
\begin{equation*}
\mu\int_{\Omega}J(x-y)w(y)dy+b(x,t)w(x)+\lambda_{p}^{\infty}w(x)=0\ \ \  \text{in }\ \bar{\Omega}\times[0,1].
\end{equation*}
Integrating the above equation over $(0,1)$ yields
\begin{equation*}
\mu\int_{\Omega}J(x-y)w(y)dy+\hat{b}(x)w(x)+\lambda_{p}^{\infty}w(x)=0\ \ \  \text{in }\ \bar{\Omega}.
\end{equation*}
So $w\in C(\bar{\Omega})$ and $w>0$ in $\bar{\Omega}$, which
implies that $\lambda_{p}^{\infty}$ is the principal eigenvalue of
the operator $N_{\Omega}$. It is easy to know that $\lambda_{p}^{\infty}=\lambda_{p}(N_{\Omega})$. Thus, we have
\begin{equation*}
\lim\limits_{\tau\rightarrow\infty}\lambda_{p}(M_{\tau})=  \lambda_{p}(N_{\Omega}).
\end{equation*}

\textbf{Case 2.} If $\lambda_{p}(M_{\tau})$ is not a principal eigenvalue for some $\tau>0$, then we can use the approximation argument as in \textbf{Case 2} in the proof of Theorem \ref{th401}
to deduce the result.
\end{proof}

\subsection{Influences of the dispersal rate and the dispersal spread}
\noindent

In this subsection, we investigate the influences of the dispersal rate $\mu$ and the dispersal spread $\sigma$ on the generalised principal eigenvalue $\lambda_{p}(L_{\Omega}^{\tau,\mu,\sigma,m})$. Firstly, we establish %give out
the upper bound of the generalised principal eigenvalue $\lambda_{p}(L_{\Omega}^{\tau,\mu,\sigma,m})$.

\begin{lemma}\label{le204}
Assume that {\rm(J)}, {\rm(H)} and {\rm(A)} hold. Then
%The following result holds
\begin{equation*}
\lambda_{p}(L_{\Omega}^{\tau,\mu,\sigma,m})\leq \underset{x\in \bar{\Omega}}{\min}\
\bigg\{\frac{\mu}{\sigma^{m}}h^{\sigma}(x)-\hat{a}(x)\bigg\}.
\end{equation*}
\end{lemma}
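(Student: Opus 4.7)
My plan is to exploit the duality $\lambda_p = \lambda_p'$ (equivalently, the direct definition of $\lambda_p$) together with a "logarithmic test" against any admissible supersolution.

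More precisely, the forward-looking strategy is as follows. Fix any $\lambda<\lambda_p(L_\Omega^{\tau,\mu,\sigma,m})$. By the definition of $\lambda_p$ (and the monotonicity observation that if $(L+\lambda^\ast)v\le 0$ for some $v\in\chi_\Omega^{++}$ and some $\lambda^\ast\ge\lambda$, then the same $v$ works for $\lambda$, since $(L+\lambda)v=(L+\lambda^\ast)v-(\lambda^\ast-\lambda)v\le 0$), I can pick $v\in\chi_\Omega^{++}$ with
\begin{equation*}
-\tau v_t+\frac{\mu}{\sigma^m}\Big(\int_\Omega J_\sigma(x-y)v(y,t)\,dy-h^\sigma(x)v(x,t)\Big)+a(x,t)v(x,t)+\lambda v(x,t)\le 0.
\end{equation*}
Now fix an arbitrary $x_0\in\bar\Omega$, evaluate at $x=x_0$, and divide through by $v(x_0,t)>0$ to obtain
\begin{equation*}
-\tau\,\partial_t\ln v(x_0,t)+\frac{\mu}{\sigma^m}\cdot\frac{\int_\Omega J_\sigma(x_0-y)v(y,t)\,dy}{v(x_0,t)}-\frac{\mu}{\sigma^m}h^\sigma(x_0)+a(x_0,t)+\lambda\le 0.
\end{equation*}

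The next step is to integrate over $t\in[0,1]$. The time-derivative term integrates to zero because $v(x_0,\cdot)$ is 1-periodic and strictly positive, hence $\ln v(x_0,\cdot)$ is 1-periodic. The nonlocal term is nonnegative since $J_\sigma\ge 0$ and $v>0$. Using $\int_0^1 a(x_0,t)\,dt=\hat a(x_0)$, this leaves
\begin{equation*}
\lambda\le \frac{\mu}{\sigma^m}h^\sigma(x_0)-\hat a(x_0)-\frac{\mu}{\sigma^m}\int_0^1\frac{\int_\Omega J_\sigma(x_0-y)v(y,t)\,dy}{v(x_0,t)}\,dt\le \frac{\mu}{\sigma^m}h^\sigma(x_0)-\hat a(x_0).
\end{equation*}
Since $x_0\in\bar\Omega$ was arbitrary, taking the minimum over $x_0$ and then the supremum over $\lambda<\lambda_p$ yields the claimed bound.

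The estimate is almost immediate once the logarithmic derivative trick is used, and there is no real obstacle: the only points to be careful about are (a) the legitimacy of choosing $v$ for the given $\lambda<\lambda_p$, which I handle by the monotonicity remark above, and (b) the fact that $v\in\chi_\Omega^{++}\subset C^{0,1}(\bar\Omega\times\mathbb R)$ so that $\ln v(x_0,\cdot)$ is $C^1$ in $t$ and 1-periodic, making the integration by parts/periodic cancellation rigorous. No appeal to the existence of a principal eigenfunction is needed, which is essential since the lemma is stated for the generalised principal eigenvalue in all cases.
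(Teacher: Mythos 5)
Your proof is correct, and it diverges from the paper's argument in a meaningful way. The paper first drops the nonnegative nonlocal integral to pass from the inequality $(L_{\Omega}^{\tau,\mu,\sigma,m}+\lambda)[\varphi]\le 0$ to $(H_{\Omega}^{\tau,\mu,\sigma,m}+\lambda)[\varphi]\le 0$ for the purely local time-periodic operator $H_{\Omega}^{\tau,\mu,\sigma,m}[\varphi]=-\tau\varphi_{t}-\frac{\mu}{\sigma^{m}}h^{\sigma}\varphi+a\varphi$, concludes $\lambda_{p}(L_{\Omega}^{\tau,\mu,\sigma,m})\le\lambda_{p}(H_{\Omega}^{\tau,\mu,\sigma,m})$, and then cites Propositions 3.4--3.5 of Rawal--Shen for the closed-form identity $\lambda_{p}(H_{\Omega}^{\tau,\mu,\sigma,m})=\min_{x\in\bar\Omega}\{\frac{\mu}{\sigma^{m}}h^{\sigma}(x)-\hat a(x)\}$. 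You instead make the argument self-contained: fix $x_0$, divide by $v(x_0,t)>0$, and integrate over one period, using that $\partial_t\ln v(x_0,\cdot)$ integrates to zero, that the nonlocal term is nonnegative, and that $\int_0^1 a(x_0,t)\,dt=\hat a(x_0)$. This replaces the external citation with the same logarithmic/averaging computation that underlies it, so your route is more elementary and fully explicit, at the price of a couple of extra lines. Your explicit monotonicity remark justifying the choice of $v$ for $\lambda<\lambda_p$ (rather than for some $\lambda^*>\lambda$ in the admissible set) is also a good point of care; the paper uses the same fact silently. Both proofs rest on the identical core observation that the nonlocal dispersal integral can only help, i.e.\ is nonnegative and hence can be discarded from the subsolution inequality.
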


\begin{proof}
%Let us fix
Fix $\lambda<\lambda_{p}(L_{\Omega}^{\tau,\mu,\sigma,m})$. By the definition
of $\lambda_{p}(L_{\Omega}^{\tau,\mu,\sigma,m})$, there exists $\varphi\in\chi_{\Omega}^{++}$ such that
\begin{equation*}
L_{\Omega}^{\tau,\mu,\sigma,m}[\varphi](x,t)+\lambda\varphi(x,t)\leq0\ \ \ \ \text{in}\
\bar{\Omega}\times\mathbb{R}.
\end{equation*}
It is easy to check that
\begin{equation*}
H_{\Omega}^{\tau,\mu,\sigma,m}[\varphi](x,t)+\lambda\varphi(x,t)\leq0\ \ \ \ \text{in}\
\bar{\Omega}\times\mathbb{R},
\end{equation*}
where $H_{\Omega}^{\tau,\mu,\sigma,m}[\varphi]=-\tau\varphi_{t}-\frac{\mu}{\sigma^{m}}h^{\sigma}
(x)\varphi+a(x,t)\varphi$. This implies that $\lambda\leq\lambda_{p}(H_{\Omega}^{\tau,\mu,\sigma,m})$. Thus%, %there holds
\begin{equation*}
\lambda_{p}(L_{\Omega}^{\tau,\mu,\sigma,m})\leq\lambda_{p}(H_{\Omega}^{\tau,\mu,\sigma,m}).
\end{equation*}
It follows from \cite[Propositions 3.4-3.5]{Rawal-2012-JDDE} that
\begin{equation*}
\lambda_{p}(H_{\Omega}^{\tau,\mu,\sigma,m})
=\underset{x\in\bar{\Omega}}{\min}\bigg\{
\frac{\mu}{\sigma^{m}}h^{\sigma}(x)-\hat{a}(x)\bigg\}.
\end{equation*}
%In a word, we obtain
%\begin{equation*}
%\lambda_{p}(L_{\Omega}^{\tau,\mu,%\sigma,m})
%\leq\underset{x\in\bar{\Omega}}{\min}\bigg\{
%\frac{\mu}{\sigma^{m}}h^{\sigma}(x)-\hat{a}(x)\bigg\}.
%\end{equation*}
This completes the proof.
\end{proof}

Now, we present the proof of
Theorem \ref{th102}.

\begin{proof}[Proof of Theorem {\rm\ref{th102}}]
(i) For any $\mu_{0}\in(0,\infty)$, applying Lemma \ref{le201}, we find that for each $\epsilon>0$, there exists $b_{\epsilon}\in C_{1}(\bar{\Omega}\times\mathbb{R})$ such that
%\begin{equation*}
$\underset{\bar{\Omega}\times\mathbb{R}}{\max}\ |b_{\epsilon}-b|<\epsilon$, % \ \ \
$|\lambda_{1}-\lambda_{1}^{\epsilon}|<\epsilon$
%\end{equation*}
and $\lambda_{1}^{\epsilon}$ is an isolated principal eigenvalue with finite multiplicity of $L_{\Omega}^{\tau,\mu_{0},1,0}(a_{\epsilon})$, where $b(x,t)=a(x,t)-\mu_{0} h^{1}(x)$, $a_{\epsilon}(x,t)=b_{\epsilon}(x,t)+\mu_{0} h^{1}(x)$. In
fact, we rewrite $L_{\Omega}^{\tau,\mu,1,0}(a_{\epsilon})$ as
\begin{equation*}
L_{\Omega}^{\tau,\mu,1,0}(a_{\epsilon})=L_{\Omega}^{\tau,\mu_{0},1,0}
(a_{\epsilon})+U_{\mu ,\mu_{0}},
\end{equation*}%
where%
\begin{equation*}
U_{\mu ,\mu_{0}}[\varphi ](x)=(\mu-\mu_{0})\bigg(\int_{\Omega
}J(x-y)\varphi (y)dy-h^{1}(x)\varphi(x)\bigg).
\end{equation*}%
Note that $U_{\mu ,\mu_{0}}$ is a linear bounded operator and $U_{\mu ,\mu_{0}}\rightarrow 0$ \ in norm as $\mu \rightarrow \mu_{0}$.
It follows from the classical perturbation theory of isolated eigenvalues \cite[Charpter 4, Section 3.5]{Kato-1995}, there exists $\delta _{0}>0$ such that for all $|\mu-\mu_{0}|<\delta
_{0}$, we have%
\begin{equation*}
\big|\lambda _{1}(L_{\Omega}^{\tau,\mu,1,0}(a_{\epsilon}))-\lambda _{1}(L_{\Omega}^{\tau,\mu_{0},1,0}(a_{\epsilon}))\big|<\epsilon.
\end{equation*}
Thanks to Theorem \ref{th101}, we obtain
\begin{equation*}
\big|\lambda _{p}(L_{\Omega}^{\tau,\mu,1,0}(a_{\epsilon}))-\lambda _{p}(L_{\Omega}^{\tau,\mu_{0},1,0}(a_{\epsilon}))\big|<\epsilon.
\end{equation*}

By Lemma \ref{le202}, $\lambda _{p}(L_{\Omega}^{\tau,\mu,1,0}(a))$
is Lipschitz continuous with respect to $a$, i.e.,%
\begin{equation*}
\big|\lambda _{p}(L_{\Omega}^{\tau,\mu,1,0}(a_{\epsilon }))-\lambda _{p}(L_{\Omega}^{\tau,\mu,1,0}(a))\big|\leq ||a-a_{\epsilon }||_{L^{\infty }(\Omega )}<
\epsilon.
\end{equation*}

In a word, for every given constant $\epsilon >0$, there exist $\delta
_{0}>0 $ and $a_{\epsilon }\in C_{1}(\overline{\Omega }\times \mathbb{R})$ such that for all $%
|\mu -\mu_{0}|<\delta _{0}$, we have%
\begin{align*}
&|\lambda _{p}(L_{\Omega}^{\tau,\mu,1,0}(a))-\lambda _{p}(L_{\Omega}^{\tau,\mu_{0},1,0}(a))| \\
\leq &|\lambda _{p}(L_{\Omega}^{\tau,\mu,1,0}(a))-\lambda _{p}(L_{\Omega}^{\tau,\mu,1,0}(a_{\epsilon }))|+|\lambda _{p}(L_{\Omega}^{\tau,\mu,1,0}(a_{\epsilon }))-\lambda _{p}(L_{\Omega}^{\tau,\mu_{0},1,0}(a_{\epsilon }))| \\
&+|\lambda _{p}(L_{\Omega}^{\tau,\mu_{0},1,0}(a_{\epsilon }))-\lambda
_{p}(L_{\Omega}^{\tau,\mu_{0},1,0}(a))| \\
<&\epsilon+\epsilon+\epsilon=3\epsilon.
\end{align*}%
So $\lambda _{p}(L_{\Omega}^{\tau,\mu,1,0})$ is continuous with respect to $\mu$.

Now, we prove the asymptotic behavior of $\lambda_{p}(L_{\Omega}^{\tau,\mu,1,0})$ as $\mu\rightarrow0^{+}$.
For simplicity, we write $\lambda_{p}^{\mu}:=\lambda_{p}(L_{\Omega}^{\tau,\mu,1,0})$. We first claim that
for each $\epsilon>0$, there is $\mu_{\epsilon}>0$ such that
\begin{equation}\label{209}
\lambda_{\epsilon}^{max}\leq \lambda_{p}^{\mu}\leq \lambda_{\epsilon}^{min}\ \ \ \ \text{for all} \ \mu\in (0,\mu_{\epsilon}),
\end{equation}
where $\lambda_{\epsilon}^{max}=-\max_{\bar{\Omega}} \hat{a}-\epsilon$,
$\lambda_{\epsilon}^{min}=-\min_{\bar{\Omega}} \hat{a}+\epsilon$.
In fact, it is easy to check that the function $\phi(x,t):=e^{\int_{0}^{t}[a(x,s)-\hat{a}(x)]ds}$ is a positive 1-periodic solution of $\phi_{t}=a(x,t)\phi-\hat{a}(x)\phi$ for $(x,t)\in \bar{\Omega}\times\mathbb{R}$. In particular, $\phi\in\chi_{\Omega}^{++}$. A simple computation yields
\begin{align*}
(L_{\Omega}^{\tau,\mu,1,0}+\lambda_{\epsilon}^{max})[\phi](x,t)=&\mu\bigg(\int_{\Omega}
J(x-y)\phi(y,t)dy-h^{1}(x)\phi(x,t)\bigg)\\
&+\bigg(\hat{a}(x)-\underset{\bar{\Omega}}{\max}\ \hat{a}-\epsilon\bigg)\phi(x,t),\\
(L_{\Omega}^{\tau,\mu,1,0}+\lambda_{\epsilon}^{min})[\phi](x,t)=&\mu\bigg(\int_{\Omega}
J(x-y)\phi(y,t)dy-h^{1}(x)\phi(x,t)\bigg)\\
&+\bigg(\hat{a}(x)-\underset{\bar{\Omega}}{\min}\ \hat{a}+\epsilon \bigg)\phi(x,t).
\end{align*}
Thus, there exists $\mu_{\epsilon}>0$ such that
\begin{equation*}
(L_{\Omega}^{\tau,\mu,1,0}+\lambda_{\epsilon}^{max})[\phi]\leq0\ \  \text{and}\ \
(L_{\Omega}^{\tau,\mu,1,0}+\lambda_{\epsilon}^{min})[\phi]\geq0\ \ \ \ \text{for all}\ \ \mu\in (0,\mu_{\epsilon}).
\end{equation*}
Moreover, by Theorem \ref{th101} and the definitions of $\lambda_{p}(L_{\Omega}^{\tau,\mu,1,0})$ and $\lambda_{p}^{'}(L_{\Omega}^{\tau,\mu,1,0})$,
there holds (\ref{209}) for all $\mu\in (0,\mu_{\epsilon})$.

Next, thanks to Lemma \ref{le204} and the inequality \eqref{209}, for each $\epsilon>0$ there is $\mu_{\epsilon}>0$ such that
\begin{equation*}
-\max_{\bar{\Omega}} \hat{a}-\epsilon\leq \lambda_{p}^{\mu}\leq \underset{x\in \bar{\Omega}}{\min} \bigg[\mu h^{1}(x)-\hat{a}(x)\bigg]\ \ \ \ \text{for all} \ \mu\in (0,\mu_{\epsilon}).
\end{equation*}
Passing $\mu\rightarrow0^{+}$, we find
\begin{equation*}
-\max_{\bar{\Omega}} \hat{a}-\epsilon\leq \underset{\mu\rightarrow0^{+}}{\liminf} \lambda_{p}^{\mu}\leq
\underset{\mu\rightarrow0^{+}}{\limsup} \lambda_{p}^{\mu}\leq
-\max_{\bar{\Omega}} \hat{a}\ \ \ \ \ \forall \ \epsilon>0,
\end{equation*}
which leads to
\begin{equation*}
\lambda_{p}^{\mu}\rightarrow -\max_{\bar{\Omega}} \hat{a}\ \ \ \ \ \text{as} \ \ \mu\rightarrow0^{+}.
\end{equation*}
(ii) The proof of continuity is similar to (i). Hence, we omit it.
It remains to prove parts (a) and (b).\\
(a) By Lemma \ref{le204}, we have
\begin{equation*}
\lambda_{p}(L_{\Omega}^{\tau,\mu,\sigma,m})\leq \underset{x\in \bar{\Omega}}{\min}\
\bigg\{\frac{\mu}{\sigma^{m}}h^{\sigma}(x)-\hat{a}(x)\bigg\}.
\end{equation*}
As $m>0$ and $\sigma\rightarrow\infty$, there holds
\begin{equation*}
\frac{\mu}{\sigma^{m}}h^{\sigma}(x)
\rightarrow0
\ \ \ \text{for all} \ x\in\bar{\Omega},
\end{equation*}
which implies that
\begin{equation*}
\underset{\sigma\rightarrow\infty}{\limsup}\ \lambda_{p}(L_{\Omega}^{\tau,\mu,\sigma,m})\leq-\underset{\bar{\Omega}}{\max}\ \hat{a}.
\end{equation*}

To complete our proof, it remains to show
\begin{equation*}
-\underset{\bar{\Omega}}{\max}\ \hat{a}\leq \underset{\sigma\rightarrow\infty}{\liminf}\ \lambda_{p}(L_{\Omega}^{\tau,\mu,\sigma,m}).
\end{equation*}
For fixed constant $\phi_{0}>0$, it is easy to check that for every $x\in\bar{\Omega}$, the function
\begin{equation}\label{223}
\phi(x,t)=e^{\int_{0}^{t}(a(x,s)-\hat{a}(x))ds}\phi_{0},\ \ \ \ t\in\mathbb{R},
\end{equation}
is a positive 1-periodic solution of the ordinary differential equation
$v_{t}=a(x,t)v-\hat{a}(x)v$ with the initial condition $v(x,0)=\phi_{0}$.
In particular, $\phi\in\chi_{\Omega}^{++}$ and we can choose $\phi_{0}$
such that $\sup_{\bar{\Omega}\times\mathbb{R}} \phi=1$. For
every $\epsilon>0$, we have
\begin{align*}
&(L_{\Omega}^{\tau,\mu,\sigma,m}-\underset{\bar{\Omega}}{\max}\ \hat{a}-\epsilon)[\phi](x,t)\\
=&-\tau\phi_{t}(x,t)+\bigg(a(x,t)-\underset{\bar{\Omega}}{\max}\ \hat{a}-\epsilon\bigg)\phi(x,t)\\
&\ \ \ \ \ \ \ +\frac{\mu}{\sigma^{m}}\bigg(\int_{\Omega}J_{\sigma}(x-y)
\phi(y,t)dy-h^{\sigma}(x)\phi(x,t)\bigg)\\
\leq&\frac{\mu}{\sigma^{m}}\bigg(\int_{\Omega}J_{\sigma}(x-y)
\phi(y,t)dy-h^{\sigma}(x)\phi(x,t)\bigg)-\epsilon\phi(x,t).
\end{align*}
Using $\sup_{\bar{\Omega}\times\mathbb{R}} \phi=1$, there holds
\begin{align*}
&\bigg\|\frac{\mu}{\sigma^{m}}\bigg(\int_{\Omega}J_{\sigma}(x-y)
\phi(y,t)dy-h^{\sigma}(x)\phi(x,t)\bigg)\bigg\|_{\infty}\\
\leq&\bigg\|\frac{\mu}{\sigma^{m}}\int_{\Omega}J_{\sigma}(x-y)
dy\bigg\|_{\infty}+\frac{\mu M}{\sigma^{m}}\rightarrow0 \ \ \ \ \text{as}\ \sigma\rightarrow\infty,
\end{align*}
which implies that there is $\sigma_{\epsilon}>0$ such that
\begin{equation*}
(L_{\Omega}^{\tau,\mu,\sigma,m}-\underset{\bar{\Omega}}{\max}\ \hat{a}-\epsilon)[\phi]\leq0 \ \ \ \ \text{for all}\ \sigma\geq\sigma_{\epsilon}.
\end{equation*}
Thanks to the definition of $\lambda_{p}(L_{\Omega}^{\tau,\mu,\sigma,m})$, there holds
\begin{equation*}
\lambda_{p}(L_{\Omega}^{\tau,\mu,\sigma,m})\geq
-\underset{\bar{\Omega}}{\max}\ \hat{a}-\epsilon\ \ \ \ \text{for all}\ \sigma\geq\sigma_{\epsilon}.
\end{equation*}
Since $\epsilon$ is an arbitrary constant, we have
\begin{equation*}
 \underset{\sigma\rightarrow\infty}{\liminf}\ \lambda_{p}(L_{\Omega}^{\tau,\mu,\sigma,m})\geq-\underset{\bar{\Omega}}
 {\max}\ \hat{a}.
\end{equation*}
Thus, we get
\begin{equation*} \underset{\sigma\rightarrow\infty}{\lim}\ \lambda_{p}(L_{\Omega}^{\tau,\mu,\sigma,m})=-\underset{\bar{\Omega}}{\max}\ \hat{a}.
\end{equation*}
(b) Again, following from Lemma \ref{le204}, we have
\begin{equation*}
\lambda_{p}(L_{\Omega}^{\tau,\mu,\sigma,0})\leq \underset{x\in \bar{\Omega}}{\min}\
\bigg\{\mu h^{\sigma}(x)-\hat{a}(x)\bigg\}.
\end{equation*}
Owing to $\underset{\sigma\rightarrow\infty}{\lim}\ h^{\sigma}(x)=c$,
this implies that
\begin{equation*}
\underset{\sigma\rightarrow\infty}{\limsup}\ \lambda_{p}(L_{\Omega}^{\tau,\mu,\sigma,0})\leq\mu c-\underset{\bar{\Omega}}{\max}\ \hat{a}.
\end{equation*}

To complete our proof, it remains to obtain
\begin{equation*}
\mu c-\underset{\bar{\Omega}}{\max}\ \hat{a}\leq \underset{\sigma\rightarrow\infty}{\liminf}\ \lambda_{p}(L_{\Omega}^{\tau,\mu,\sigma,0}).
\end{equation*}
For fixed constant $\phi_{0}>0$, it is easy to check that for every $x\in\bar{\Omega}$, the function
\begin{equation}\label{523}
\phi(x,t)=e^{\int_{0}^{t}(a(x,s)-\hat{a}(x))ds}\phi_{0},\ \ \ \ t\in\mathbb{R},
\end{equation}
is a positive 1-periodic solution of the ordinary differential equation
$v_{t}=a(x,t)v-\hat{a}(x)v$ with the initial condition $v(x,0)=\phi_{0}$.
In particular, $\phi\in\chi_{\Omega}^{++}$ and we can choose $\phi_{0}$
such that $\sup_{\bar{\Omega}\times\mathbb{R}} \phi=1$. For
every $\epsilon>0$, we have
\begin{align*}
&(L_{\Omega}^{\tau,\mu,\sigma,0}-\underset{\bar{\Omega}}{\max}\ \hat{a}+\mu c-\epsilon)[\phi](x,t)\\
=&-\tau\phi_{t}(x,t)+\bigg(a(x,t)-\underset{\bar{\Omega}}{\max}\ \hat{a}-\epsilon\bigg)\phi(x,t)\\
&\ \ \ \ \ \ \ +\mu \bigg(\int_{\Omega}J_{\sigma}(x-y)
\phi(y,t)dy-h^{\sigma}(x)\phi(x,t)+c\phi(x,t)\bigg)\\
\leq& \mu \bigg(\int_{\Omega}J_{\sigma}(x-y)
\phi(y,t)dy-h^{\sigma}(x)\phi(x,t)+c\phi(x,t)\bigg)-\epsilon\phi(x,t).
\end{align*}
Using $\sup_{\bar{\Omega}\times\mathbb{R}} \phi=1$, there holds
\begin{align*}
&\bigg\|\mu \bigg(\int_{\Omega}J_{\sigma}(x-y)
\phi(y,t)dy-h^{\sigma}(x)\phi(x,t)+c \phi(x,t)\bigg)\bigg\|_{\infty}\\
\leq&\bigg\|\mu \int_{\Omega}J_{\sigma}(x-y)
dy\bigg\|_{\infty}+\|c-h^{\sigma}\|_{\infty}\rightarrow0 \ \ \ \ \text{as}\ \sigma\rightarrow\infty,
\end{align*}
which implies that there is $\sigma_{\epsilon}>0$ such that
\begin{equation*}
(L_{\Omega}^{\tau,\mu,\sigma,0}-\underset{\bar{\Omega}}{\max}\ \hat{a}+\mu c-\epsilon)[\phi]\leq0 \ \ \ \ \text{for all}\ \sigma\geq\sigma_{\epsilon}.
\end{equation*}
Thanks to the definition of $\lambda_{p}(L_{\Omega}^{\tau,\mu,\sigma,0})$, there holds
\begin{equation*}
\lambda_{p}(L_{\Omega}^{\tau,\mu,\sigma,0})\geq
\mu c-\underset{\bar{\Omega}}{\max}\ \hat{a}-\epsilon\ \ \ \ \text{for all}\ \sigma\geq\sigma_{\epsilon}.
\end{equation*}
Since $\epsilon$ is an arbitrary constant, we have
\begin{equation*}
 \underset{\sigma\rightarrow\infty}{\liminf}\ \lambda_{p}(L_{\Omega}^{\tau,\mu,\sigma,0})\geq\mu c-\underset{\bar{\Omega}}{\max} \hat{a}.
\end{equation*}
Thus, we get
\begin{equation*} \underset{\sigma\rightarrow\infty}{\lim}\ \lambda_{p}(L_{\Omega}^{\tau,\mu,\sigma,0})=\mu c-\underset{\bar{\Omega}}{\max}\ \hat{a}.
\end{equation*}
This completes the proof of Theorem \ref{th102}.
\end{proof}

Next, we recall the following lemma in \cite[Corollary D]{Rawal-2012-JDDE}.

\begin{lemma}\label{le203}
Assume that {\rm(J)} and {\rm(A)} hold. Let $N_{\Omega}^{\mu,\sigma,m}[\varphi](x):=\frac{\mu}{\sigma^{m}}\int_{\Omega}
J_{\sigma}(x-y)(\varphi(y)-\varphi(x))dy+\hat{a}(x)\varphi(x)$. If $\lambda_{p}(N_{\Omega}^{\mu,\sigma,m})$ is the principal eigenvalue of $N_{\Omega}^{\mu,\sigma,m}$ and $h^{\sigma}(x)=
\int_{\Omega}J_{\sigma}(x-y)dy$, then $\lambda_{p}(L_{\Omega}^{\tau,\mu,\sigma,m})$ is the principal eigenvalue of $L_{\Omega}^{\tau,\mu,\sigma,m}$.
\end{lemma}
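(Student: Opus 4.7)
The plan is to apply the Krein--Rutman theorem to the Poincar\'e (time-$1$) map $P$ of the linear evolution equation associated with $L_\Omega^{\tau,\mu,\sigma,m}$, with the hypothesis on $N_\Omega^{\mu,\sigma,m}$ used to secure the compactness/isolation properties required for Krein--Rutman to apply. Concretely, I consider the linear Cauchy problem
\begin{equation*}
\tau u_t = \tfrac{\mu}{\sigma^m}\bigl(J_\sigma * u - h^\sigma u\bigr) + a(x,t)u,\qquad u(\cdot,0) = u_0 \in C(\bar\Omega),
\end{equation*}
with $h^\sigma(x)=\int_\Omega J_\sigma(x-y)dy$, and let $P:C(\bar\Omega)\to C(\bar\Omega)$, $u_0\mapsto u(\cdot,1)$, be its Poincar\'e map. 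Because $J_\sigma\geq 0$, $J_\sigma(0)>0$, and $a$ is bounded, $P$ is a well-defined, bounded, and strongly positive linear operator.

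Next, I would use the hypothesis that $N_\Omega^{\mu,\sigma,m}$ admits a classical principal eigenvalue $\lambda_0$ with positive eigenfunction $\psi\in C(\bar\Omega)$ to establish a spectral gap between the spectral radius of $P$ and its essential spectral radius. Specifically, exponential super/sub-solutions built from $\psi$ (e.g., of the form $\psi(x)e^{-\lambda_0 t/\tau}$) bracket the evolution; combined with positivity of the kernel and the Neumann choice $h^\sigma=\int_\Omega J_\sigma$, a generalised Krein--Rutman theorem (in the spirit of \cite{Rawal-2012-JDDE}) then produces an isolated principal eigenvalue $\eta=r(P)>0$ of $P$ with associated positive eigenfunction $u_0>0$ on $\bar\Omega$.

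With such $u_0$, let $u(x,t)$ be the solution of the Cauchy problem with initial data $u_0$ and set $\varphi(x,t):=\eta^{-t}u(x,t)$. Then $\varphi(\cdot,1)=\eta^{-1}Pu_0=u_0=\varphi(\cdot,0)$, so $\varphi\in\chi_\Omega^{++}$ is $1$-periodic in $t$. A direct computation (differentiating $\varphi=\eta^{-t}u$ in $t$ and inserting the Cauchy equation) gives $L_\Omega^{\tau,\mu,\sigma,m}[\varphi]=\tau(\ln\eta)\varphi$, so with $\lambda^*:=-\tau\ln\eta$ one has $L_\Omega^{\tau,\mu,\sigma,m}[\varphi]+\lambda^*\varphi=0$. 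This equality simultaneously gives $\lambda^*\leq\lambda_p(L_\Omega^{\tau,\mu,\sigma,m})$ and $\lambda^*\geq\lambda_p'(L_\Omega^{\tau,\mu,\sigma,m})$; by Theorem \ref{th101}, $\lambda_p=\lambda_p'$, forcing $\lambda_p(L_\Omega^{\tau,\mu,\sigma,m})=\lambda^*$, which is therefore a principal eigenvalue attained by the positive $1$-periodic eigenfunction $\varphi$.

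The main obstacle is the spectral analysis of $P$: since nonlocal dispersal does not provide the PDE smoothing that makes parabolic Poincar\'e maps compact, the essential spectrum of $P$ is typically nontrivial, and Krein--Rutman does not apply automatically. The hypothesis that $N_\Omega^{\mu,\sigma,m}$ itself admits a classical principal eigenvalue is exactly what guarantees a positive spectral gap---the essential spectral radius of $P$ must be strictly less than $r(P)$---which is the structural ingredient making the construction of the positive eigenfunction $u_0$ possible and, in turn, producing the principal eigenfunction $\varphi$ of $L_\Omega^{\tau,\mu,\sigma,m}$.
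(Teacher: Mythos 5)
The paper does not actually prove this lemma --- it is imported verbatim from Rawal--Shen \cite[Corollary D]{Rawal-2012-JDDE} --- so there is no in-paper argument to compare against. Treated as a free-standing proof, your proposal is correct in its closing steps but has a genuine gap at the crucial one.

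What is correct: if one has a positive eigenfunction $u_0$ of the Poincar\'e map $P$ with eigenvalue $\eta=r(P)$, then $\varphi(x,t):=\eta^{-t}u(x,t)$ is $1$-periodic, lies in $\chi_\Omega^{++}$, and satisfies $L_\Omega^{\tau,\mu,\sigma,m}[\varphi]+\lambda^*\varphi=0$ with $\lambda^*=-\tau\ln\eta$; combined with Theorem~\ref{th101} this forces $\lambda_p(L_\Omega^{\tau,\mu,\sigma,m})=\lambda^*$ and identifies $\varphi$ as a principal eigenfunction. That part is fine.

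The gap is the assertion that the hypothesis on $N_\Omega^{\mu,\sigma,m}$ yields $r(P)>r_{\mathrm{ess}}(P)$. Your proposed mechanism --- that functions of the form $\psi(x)e^{-\lambda_0 t/\tau}$ ``bracket the evolution'' --- does not work. Writing the evolution as $\tau u_t=\mathcal{A}(t)u$ with $\mathcal{A}(t)u=\frac{\mu}{\sigma^m}(J_\sigma*u-h^\sigma u)+a(x,t)u$, and using $N_\Omega^{\mu,\sigma,m}\psi=-\lambda_0\psi$ together with $h^\sigma=\int_\Omega J_\sigma(\cdot-y)dy$, one computes
\begin{equation*}
\mathcal{A}(t)\bigl(\psi e^{-\lambda_0 t/\tau}\bigr)-\tau\partial_t\bigl(\psi e^{-\lambda_0 t/\tau}\bigr)=\bigl(a(x,t)-\hat a(x)\bigr)\psi(x)e^{-\lambda_0 t/\tau},
\end{equation*}
which has no definite sign, so $\psi e^{-\lambda_0 t/\tau}$ is neither a sub- nor a super-solution. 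Correcting by the integrating factor $\rho(x,t)=e^{\frac1\tau\int_0^t(a(x,s)-\hat a(x))ds}$ kills the local defect but does not commute with the convolution; the resulting requirement $\int_\Omega J_\sigma(x-y)\psi(y)\bigl(\rho(y,t)-\rho(x,t)\bigr)dy\geq 0$ is false in general. Crude corrections like $\psi e^{(-\lambda_0\mp M)t/\tau}$ with $M=\max|a-\hat a|$ do give comparison functions, but they only bound $r(P)$ by $e^{(-\lambda_0-M)/\tau}$, which is not strong enough to beat $r_{\mathrm{ess}}(P)=\exp\bigl(-\frac1\tau\min_{\bar\Omega}\{\frac{\mu}{\sigma^m}h^\sigma-\hat a\}\bigr)$.

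What actually closes the argument (and is consistent with how the surrounding paper proceeds) is a quantitative comparison between the two principal eigenvalues: $\lambda_p(L_\Omega^{\tau,\mu,\sigma,m})\leq\lambda_p(N_\Omega^{\mu,\sigma,m})$. This is Rawal--Shen's Theorem~C, and it is also a consequence of Theorem~\ref{th12}: $\lambda_p$ is non-decreasing in $\tau$ with $\lim_{\tau\to\infty}\lambda_p(L_\Omega^{\tau,\mu,\sigma,m})=\lambda_p(N_\Omega)$. Once one has that inequality, one invokes the essential-spectrum criterion: in the Neumann case $\lambda_p(\cdot)$ is an isolated principal eigenvalue if and only if it lies strictly below the multiplication threshold $\min_{\bar\Omega}\{\frac{\mu}{\sigma^m}h^\sigma-\hat a\}$ (cf.\ Lemma~\ref{le204} and \cite{Shen-2015-DCDS} for $N$, and the analogous statement for the time-periodic operator in \cite{Rawal-2012-JDDE}). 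The hypothesis that $\lambda_p(N_\Omega^{\mu,\sigma,m})$ is a principal eigenvalue forces $\lambda_p(N_\Omega^{\mu,\sigma,m})$ strictly below this threshold, whence so is $\lambda_p(L_\Omega^{\tau,\mu,\sigma,m})$, and you are done. In short, your framework is viable, but you need to supply the eigenvalue comparison $\lambda_p(L)\leq\lambda_p(N)$ (or an equivalent sharp estimate on $r(P)$); the sub/super-solution ansatz you wrote down does not provide it.
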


Finally, we prove Theorem \ref{th103}.

\begin{proof}[Proof of Theorem {\rm\ref{th103}}]
(i) By \cite[Theorem 2.2 (3)]{Shen-2015-DCDS}, there exists $\mu_{1}>0$
such that $\lambda_{p}(N_{\Omega}^{\mu,1,0})$ is the principal eigenvalue of $N_{\Omega}^{\mu,1,0}$ for all $\mu\geq\mu_{1}$. It follows from Lemma \ref{le203} that $\lambda_{p}(L_{\Omega}^{\tau,\mu,1,0})$ is the principal eigenvalue of $L_{\Omega}^{\tau,\mu,1,0}$ for all $\mu\geq\mu_{1}$.

Since $\lambda_{p}^{\mu}$ is the principal eigenvalue of $L_{\Omega}^{\tau,\mu,1,0}$ for all $\mu\geq \mu_{1}$, there exists $\varphi\in\chi_{\Omega}^{++}$ with $\int_{0}^{1}\int_{\Omega}\varphi^{2}(x,t)dxdt=1$
such that
\begin{equation} \label{210}
-\tau\varphi_{t}(x,t)+\mu\int_{\Omega}J(x-y)(\varphi(y,t)-\varphi(x,t))dy
+(a(x,t)+\lambda_{p}^{\mu})\varphi(x,t)=0\ \ \ \text{in} \ \bar{\Omega}\times\mathbb{R}.
\end{equation}

On one hand, divide (\ref{210}) by $\varphi$ and integrate over $\Omega\times(0,1)$
to obtain
\begin{align*}
\int_{0}^{1}\int_{\Omega} \lambda_{p}^{\mu}dxdt= &\tau\int_{0}^{1}\int_{\Omega} \frac{\varphi_{t}(x,t)}{\varphi(x,t)}dxdt
-\int_{0}^{1}\int_{\Omega}a(x,t)dxdt\\
&-\mu \int_{0}^{1}\int_{\Omega}\int_{\Omega}J(x-y) \frac{\varphi(y,t)-\varphi(x,t)}{\varphi(x,t)}dydxdt.
\end{align*}
Owing to
\begin{equation*}
\int_{0}^{1}\int_{\Omega} \frac{\varphi_{t}(x,t)}{\varphi(x,t)}dxdt
=\int_{\Omega}\int_{0}^{1} \frac{\varphi_{t}(x,t)}{\varphi(x,t)}dtdx=0
\end{equation*}
and
\begin{align*}
&\mu \int_{0}^{1}\int_{\Omega}\int_{\Omega}J(x-y) \frac{\varphi(y,t)-\varphi(x,t)}{\varphi(x,t)}dydxdt\\
=&\frac{\mu}{2} \int_{0}^{1}\int_{\Omega}\int_{\Omega}J(x-y) \frac{(\varphi(y,t)-\varphi(x,t))^{2}}{\varphi(x,t)\varphi(y,t)}dydxdt,
\end{align*}
we get
\begin{align*}
&\int_{0}^{1}\int_{\Omega} \lambda_{p}^{\mu}dxdt\\
=&-\int_{0}^{1}\int_{\Omega}a(x,t)dxdt
-\frac{\mu}{2} \int_{0}^{1}\int_{\Omega}\int_{\Omega}J(x-y) \frac{(\varphi(y,t)-\varphi(x,t))^{2}}{\varphi(x,t)\varphi(y,t)}dydxdt.
\end{align*}
This implies that
\begin{equation}\label{211}
\lambda_{p}^{\mu}\leq -\bar{\hat{a}}.
\end{equation}

On the other hand, multiplying (\ref{210}) by $\varphi$ and integrating
over $\Omega\times(0,1)$ yield
\begin{align*}
\int_{0}^{1}\int_{\Omega} \lambda_{p}^{\mu}\varphi^{2}(x,t)dxdt= &\tau\int_{0}^{1}\int_{\Omega} \varphi_{t}(x,t)\varphi(x,t)dxdt
-\int_{0}^{1}\int_{\Omega}a(x,t)\varphi^{2}(x,t)dxdt\\
&-\mu \int_{0}^{1}\int_{\Omega}\int_{\Omega}J(x-y) (\varphi(y,t)-\varphi(x,t))\varphi(x,t)dydxdt.
\end{align*}
In view of $\varphi\in \chi_{\Omega}^{++}$ and the symmetry of $J$,
we have
\begin{align}\label{212}
\lambda_{p}^{\mu}=
\frac{\mu}{2} \int_{0}^{1}\int_{\Omega}\int_{\Omega}J(x-y) (\varphi(y,t)-\varphi(x,t))^{2}dydxdt
-\int_{0}^{1}\int_{\Omega}a(x,t)\varphi^{2}(x,t)dxdt,
\end{align}
which implies that
\begin{equation}\label{213}
\lambda_{p}^{\mu}\geq -\int_{0}^{1}\int_{\Omega}a(x,t)\varphi^{2}(x,t)dxdt\geq
-\underset{\bar{\Omega}\times[0,1]}{\max}\ a.
\end{equation}
By combining (\ref{211}) and (\ref{213}), we obtain
\begin{equation}\label{214}
-\underset{\bar{\Omega}\times[0,1]}{\max}\ a\leq\lambda_{p}^{\mu}
\leq -\bar{\hat{a}}.
\end{equation}

Rewriting (\ref{212}) as
\begin{align*}
&\int_{0}^{1}\int_{\Omega}\int_{\Omega}J(x-y) (\varphi(y,t)-\varphi(x,t))^{2} dydxdt\\
=&\frac{2}{\mu} \int_{0}^{1}\int_{\Omega}(\lambda_{p}^{\mu}+a(x,t))\varphi^{2}(x,t)dxdt,
\end{align*}
it follows from (\ref{214}) that
\begin{equation}\label{215}
\begin{split}
&\int_{0}^{1}\int_{\Omega}\int_{\Omega}J(x-y) (\varphi(y,t)-\varphi(x,t))^{2} dydxdt\\
\leq& \frac{2}{\mu}\bigg(\underset{\bar{\Omega}\times[0,1]}{\max}\ a
-\bar{\hat{a}}\bigg).
\end{split}
\end{equation}
Let $\psi(x,t):=\varphi(x,t)-\bar{\varphi}(t)$, where
$\bar{\varphi}(t)=\frac{1}{|\Omega|}\int_{\Omega}\varphi(x,t)dx$.
Then we have $\int_{\Omega}\psi(x,t)dx=0$.
Observe that
\begin{equation}\label{217}
\int_{\Omega}\int_{\Omega}J(x-y) (\varphi(y,t)-\varphi(x,t))^{2} dydx
= \int_{\Omega}\int_{\Omega}J(x-y) (\psi(y,t)-\psi(x,t))^{2} dydx.
\end{equation}
By \cite[Page 1688, Formula (5.6)]{Shen-2015-DCDS}, there exists $C>0$ such that
\begin{equation}\label{218}
\int_{\Omega}\psi^{2}(x,t)dx\leq\frac{1}{2C}\int_{\Omega}\int_{\Omega}J(x-y) (\psi(y,t)-\psi(x,t))^{2} dydx\ \ \ \text{for all}~~ \mu\gg1.
\end{equation}
It deduces from (\ref{215}),
(\ref{217}) and (\ref{218}) that
\begin{equation}\label{219}
\begin{split}
\int_{0}^{1}\int_{\Omega}\psi^{2}(x,t)dxdt&\leq
\frac{1}{2C}\int_{0}^{1}\int_{\Omega}\int_{\Omega}J(x-y) (\psi(y,t)-\psi(x,t))^{2} dydxdt\\
&\leq\frac{1}{C\mu}\bigg(\underset{\bar{\Omega}\times[0,1]}{\max}\ a
-\bar{\hat{a}}\bigg).
\end{split}
\end{equation}

Now, integrating (\ref{210}) over $\Omega$ and substituting
$\psi=\varphi-\bar{\varphi}$ yield
\begin{equation}\label{221}
\tau\bar{\varphi}_{t}=\frac{1}{|\Omega|}\int_{\Omega}(\lambda_{p}^{\mu}+a(x,t))dx\bar{\varphi}
+\frac{1}{|\Omega|}\int_{\Omega}(\lambda_{p}^{\mu}+a(x,t))\psi(x,t)dx.
\end{equation}
In view of (\ref{219}), we have
\begin{equation*}
\int_{0}^{1}\int_{\Omega}(\lambda_{p}^{\mu}+a(x,t))\psi(x,t)dxdt
=O(\mu^{-\frac{1}{2}})\ \ \ \ \text{as} \ \mu\rightarrow\infty.
\end{equation*}
Using the integrating form in this first order differential equation,
we find that
\begin{equation}\label{222}
\bar{\varphi}(t)=\bar{\varphi}(0)e^{\frac{1}{\tau|\Omega|}\int_{0}^{t}\int_{\Omega}
(\lambda_{p}^{\mu}+a(x,t))dxdt}+O(\mu^{-\frac{1}{2}})\ \ \ \ \text{as} \ \mu\rightarrow\infty.
\end{equation}
Since $\bar{\varphi}(1)=\bar{\varphi}(0)$, we get
\begin{equation*}
\int_{0}^{1}\int_{\Omega}
(\lambda_{p}^{\mu}+a(x,t))dxdt\rightarrow0 \ \ \text{or}\ \ \bar{\varphi}(0)\rightarrow0\ \ \ \ \text{as} \ \mu\rightarrow\infty.
\end{equation*}
If $\bar{\varphi}(0)\rightarrow0$, then, by (\ref{222}), also $\bar{\varphi}(t)\rightarrow0$ uniformly in $t\in [0,1]$ as
$\mu\rightarrow\infty$.
Thanks to (\ref{215}) and the symmetry of $J$, we get
\begin{align*}
&\int_{0}^{1}\int_{\Omega}\varphi^{2}(x,t)dxdt\\
\leq& C_{0}\int_{0}^{1}\int_{\Omega}\int_{\Omega}J(x-y)\varphi^{2}(x,t)dydxdt\\
=&C_{0}\int_{0}^{1}\int_{\Omega}\int_{\Omega}J(x-y)(\varphi^{2}(x,t)-
\varphi(y,t)\varphi(x,t))dydxdt\\
&+C_{0}\int_{0}^{1}\int_{\Omega}\int_{\Omega}J(x-y)
\varphi(y,t)\varphi(x,t)dydxdt\\
\leq&\frac{C_{0}}{2}\int_{0}^{1}\int_{\Omega}\int_{\Omega}J(x-y)
(\varphi(y,t)-\varphi(x,t))^{2}dydxdt\\
&+|\Omega|^{2}C_{0}M \int_{0}^{1}\bar{\varphi}^{2}(t)dt\\
\leq&\frac{C_{0}}{\mu}\bigg(\underset{\bar{\Omega}\times[0,1]}{\max}\ a
-\bar{\hat{a}}\bigg)+|\Omega|^{2}C_{0}M \int_{0}^{1}\bar{\varphi}^{2}(t)dt,
\end{align*}
where $C_{0}=\bigg(\underset{x\in \bar{\Omega}}{\min}\int_{\Omega}J(x-y)dy\bigg)^{-1}$ and $M=\underset{(x,y)\in \bar{\Omega}\times\bar{\Omega}}{\max}J(x-y)$.
This implies that
\begin{equation*}
\int_{0}^{1}\int_{\Omega}\varphi^{2}(x,t)dxdt\rightarrow0\ \ \ \ \text{as} \ \mu\rightarrow\infty,
\end{equation*}
in contradiction to the normalization of $\varphi$. Thus, we have
\begin{equation*}
\int_{0}^{1}\int_{\Omega}
(\lambda_{p}^{\mu}+a(x,t))dxdt\rightarrow0 \ \ \ \ \text{as} \ \mu\rightarrow\infty,
\end{equation*}
that is,
\begin{equation*}
\lambda_{p}^{\mu}\rightarrow-\bar{\hat{a}} \ \ \ \ \text{as} \ \mu\rightarrow\infty.
\end{equation*}
(ii) By \cite[Theorem 1.2]{Su-2019-submitted}, there exists $\sigma_{0}>0$
such that $\lambda_{p}(N_{\Omega}^{\mu,\sigma,m})$ is the principal eigenvalue of $N_{\Omega}^{\mu,\sigma,m}$ for all $\sigma\leq\sigma_{0}$. It follows from Lemma \ref{le203} that $\lambda_{p}(L_{\Omega}^{\tau,\mu,\sigma,m})$ is the principal eigenvalue of $L_{\Omega}^{\tau,\mu,\sigma,m}$ for all $\sigma\leq\sigma_{0}$.

Let $\phi$ be as in \eqref{223}. Without loss of generality, we assume that $a\in C_{1}^{2,1}(\bar{\Omega}\times\mathbb{R})$. Then there holds $\phi\in C_{1}^{2,1}(\bar{\Omega}\times\mathbb{R})\cap\chi_{\Omega}^{++}$.
For every $\epsilon>0$, we have
\begin{equation}\label{224}
\begin{split}
&(L_{\Omega}^{\tau,\mu,\sigma,m}-\underset{\bar{\Omega}}{\max}\ \hat{a}-\epsilon)[\phi](x,t)\\
\leq&\frac{\mu}{\sigma^{m}}\int_{\Omega}J_{\sigma}(x-y)
(\phi(y,t)-\phi(x,t))dy-\epsilon\phi(x,t)\\
\leq&\frac{\mu}{\sigma^{m}}\int_{\frac{\Omega-x}{\sigma}}J(z)
(\phi(x+\sigma z,t)-\phi(x,t))dz-\epsilon\phi(x,t).
\end{split}
\end{equation}
For $\sigma$ small enough, say $\sigma\leq\sigma_{1}$, we obtain
$B(0,1)\subset\frac{\Omega-x}{\sigma}$ for all $x\in \Omega$. Thus,
by Taylor's expansion and the symmetric of $J$, there holds
\begin{equation}\label{225}
\begin{split}
&\frac{\mu}{\sigma^{m}}\int_{\frac{\Omega-x}{\sigma}}J(z)
(\phi(x+\sigma z,t)-\phi(x,t))dz\\
=&\frac{\mu}{\sigma^{m}}\int_{\mathbb{R}^{N}}J(z)
(\phi(x+\sigma z,t)-\phi(x,t))dz\\
=&\frac{\mu}{\sigma^{m}}\int_{\mathbb{R}^{N}}J(z)
\bigg(D\phi(x,t)(\sigma z)+\frac{1}{2}(\sigma z)^{T}D^{2}\phi(x,t)(\sigma z)+o(\sigma^{2})\bigg)dz\\
=&\frac{\mu\sigma^{2-m}}{2}\int_{\mathbb{R}^{N}}J(z)
z^{T}D^{2}\phi(x,t)zdz+o(\sigma^{2-m}).
\end{split}
\end{equation}
By combining (\ref{224}) with (\ref{225}), there exists $0<\sigma_{\epsilon}<\sigma_{1}$ such that
\begin{equation*}
(L_{\Omega}^{\tau,\mu,\sigma,m}-\underset{\bar{\Omega}}{\max}\ \hat{a}-\epsilon)[\phi]\leq0 \ \ \ \ \text{for all}\ \sigma\leq\sigma_{\epsilon}.
\end{equation*}
Using the definition of $\lambda_{p}(L_{\Omega}^{\tau,\mu,\sigma,m})$, there holds
\begin{equation*}
\lambda_{p}(L_{\Omega}^{\tau,\mu,\sigma,m})\geq
-\underset{\bar{\Omega}}{\max}\ \hat{a}-\epsilon\ \ \ \ \text{for all}\ \sigma\leq\sigma_{\epsilon}.
\end{equation*}
Therefore, we obtain
\begin{equation*}
\underset{\sigma\rightarrow0^{+}}{\liminf}\ \lambda_{p}(L_{\Omega}^{\tau,\mu,\sigma,m})\geq-\underset{\bar{\Omega}}{\max}\ \hat{a}.
\end{equation*}

It remains to show that
\begin{equation*}
\underset{\sigma\rightarrow0^{+}}{\limsup}\ \lambda_{p}(L_{\Omega}^{\tau,\mu,\sigma,m})\leq-\underset{\bar{\Omega}}{\max}\ \hat{a}.
\end{equation*}
By Theorem \ref{th101} and \cite[Theorem C]{Rawal-2012-JDDE}, it follows that
\begin{equation}\label{226}
\lambda_{p}(L_{\Omega}^{\tau,\mu,\sigma,m})\leq\lambda_{p}(N_{\Omega}^{\mu,\sigma,m}).
\end{equation}
Using \cite[Theorem 1.2]{Su-2019-submitted}, there holds
\begin{equation}\label{227}
\underset{\sigma\rightarrow0^{+}}{\lim}\ \lambda_{p}(N_{\Omega}^{\mu,\sigma,m})=-\underset{\bar{\Omega}}{\max}\ \hat{a}.
\end{equation}
In view of (\ref{226}) and (\ref{227}), we have
\begin{equation*}
\underset{\sigma\rightarrow0^{+}}{\limsup}\ \lambda_{p}(L_{\Omega}^{\tau,\mu,\sigma,m})\leq-\underset{\bar{\Omega}}{\max}\ \hat{a}.
\end{equation*}
The proof is complete.
\end{proof}

%\subsection{Influences of the temporal variation}
%\subsection{The maximum principle}

%%%%%%%%%%%%%%%%%%%%%%%%%%%%%%%%%%%%%%%%%%%%%%%%%%%%%%%%%%%%%%%%%%%%%%%%
\section{Time-periodic nonlocal dispersal KPP equations}

In this section we apply
%consider the applications of
the results for the %established
generalised principal eigenvalues %theories
to the time-periodic nonlocal dispersal KPP equation with Neumann boundary conditions. Firstly, we
study the effects of the frequency on the persistence of populations.
Next, we discuss the effects of the dispersal rate and the dispersal spread on the positive time-periodic solutions to the equation. More precisely, we obtain the existence, uniqueness and stability of positive time-periodic solutions when $\mu$ or $\sigma $ is sufficiently small or large. Furthermore, we analyse the asymptotic limits of the positive time-periodic solutions as $\mu$ or $\sigma$ tends to zero or infinity.

\subsection{Effects of the frequency}
\noindent

%In this subsection,
The following result provides %gives out
some sufficient
conditions such that Corollary \ref{17} holds.

\begin{theorem}\label{pro301}
Assume that {\rm(J)} and {\rm(F)} hold. Then the following statements hold:
\begin{itemize}
\item[(i)] If $\int_{0}^{1}\underset{x\in\bar{\Omega}}{\max} \{a(x,t)\}dt<0$, then  \eqref{105} admits no solution in $\chi_{\Omega}^{+}\setminus\{0\}$ and zero solution is globally asymptotically stable for all $\tau\in(0,\infty)$;
\item[(ii)] If $\int_{0}^{1}\underset{x\in\bar{\Omega}}{\max} \{a(x,t)\}dt-\frac{\mu}{\sigma^{m}}>0$, $\underset{x\in\bar{\Omega}}{\max} \{\hat{a}(x)\}<0$ and $\lambda_{p}(L_{\Omega}^{\tau,\mu,\sigma,m})$ is a principal eigenvalue of the operator $L_{\Omega}^{\tau,\mu,\sigma,m}$, then there is some constant $\tau^{*}>0$ such that
    \begin{itemize}
    \item[(a)] If $\tau<\tau^{*}$, then \eqref{105} admits a unique solution $u_{\tau}^{\ast}\in \chi_{\Omega}^{++}$ that is globally asymptotically stable.
    \item[(b)] If $\tau>\tau^{*}$, then \eqref{105} admits no solution in $\chi_{\Omega}^{+}\setminus\{0\}$ and zero solution is globally asymptotically stable;
    \end{itemize}
\item[(iii)] If $\underset{x\in\bar{\Omega}}{\max} \{\hat{a}(x)\}-\frac{\mu}{\sigma^{m}}>0$, then  \eqref{105} admits a unique solution $u_{\tau}^{\ast}\in \chi_{\Omega}^{++}$ that is globally asymptotically stable for all $\tau\in(0,\infty)$.
\end{itemize}
\end{theorem}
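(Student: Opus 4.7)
The plan is to reduce each of the three cases to the corresponding part of Corollary \ref{17} by verifying the sign conditions on $\int_0^1\lambda_p(N_\Omega^t)\,dt$ and on $\lambda_p(N_\Omega)$. The key observation, specific to the Neumann nonlocal setting $h^\sigma(x)=\int_\Omega J_\sigma(x-y)\,dy$, is that the constant function $v\equiv 1$ lies in the kernel of the dispersal part, so that $N_\Omega^t[\mathbf 1]=a(\cdot,t)$ and $N_\Omega[\mathbf 1]=\hat a$ on $\bar\Omega$.

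First I would establish the lower bounds $\lambda_p(N_\Omega^t)\geq -\max_{\bar\Omega}a(\cdot,t)$ and $\lambda_p(N_\Omega)\geq -\max_{\bar\Omega}\hat a$ by plugging $v\equiv 1$ into the definition of $\lambda_p$ as a supremum: for any $\lambda\leq -\max_{\bar\Omega}a(\cdot,t)$ the function $v\equiv 1$ satisfies $(N_\Omega^t+\lambda)[1]=a(x,t)+\lambda\leq 0$, and similarly for $N_\Omega$. Next, I would adapt the argument of Lemma \ref{le204} to the time-independent operators $N_\Omega^t$ and $N_\Omega$: dropping the non-negative convolution term reduces the estimate to the known formula for the principal eigenvalue of the pure multiplication operator, giving
\begin{equation*}
\lambda_p(N_\Omega^t)\leq \min_{x\in\bar\Omega}\bigl\{\tfrac{\mu}{\sigma^m}h^\sigma(x)-a(x,t)\bigr\}\leq \tfrac{\mu}{\sigma^m}-\max_{\bar\Omega}a(\cdot,t),
\end{equation*}
and analogously $\lambda_p(N_\Omega)\leq \tfrac{\mu}{\sigma^m}-\max_{\bar\Omega}\hat a$, where the second inequality uses $h^\sigma\leq 1$.

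Each part then follows directly. For (i), integrating the lower bound yields $\int_0^1\lambda_p(N_\Omega^t)\,dt\geq -\int_0^1\max_{\bar\Omega}a(\cdot,t)\,dt>0$, so Corollary \ref{17}(i) applies. For (ii), the upper bound integrated in $t$ combined with $\int_0^1\max_{\bar\Omega}a(\cdot,t)\,dt>\mu/\sigma^m$ gives $\int_0^1\lambda_p(N_\Omega^t)\,dt<0$, while the lower bound combined with $\max_{\bar\Omega}\hat a<0$ gives $\lambda_p(N_\Omega)>0$; since $\lambda_p(L_\Omega^{\tau,\mu,\sigma,m})$ is assumed to be a principal eigenvalue, Corollary \ref{17}(ii) applies. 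For (iii), the upper bound gives $\lambda_p(N_\Omega)\leq \mu/\sigma^m-\max_{\bar\Omega}\hat a<0$, so Corollary \ref{17}(iii) applies.

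The main point requiring attention is the adaptation of Lemma \ref{le204} to the time-independent operators $N_\Omega^t$ and $N_\Omega$, but this is essentially cosmetic: the temporal derivative drops out and the argument reduces to the elementary fact that the principal eigenvalue of the multiplication operator $v\mapsto(-\tfrac{\mu}{\sigma^m}h^\sigma+a)v$ is $\min_x(\tfrac{\mu}{\sigma^m}h^\sigma-a)$. No new analytic estimates beyond those already developed in the paper are required.
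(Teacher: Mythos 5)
Your proof is correct and follows essentially the same route as the paper: establish the two-sided bounds $-\max_{\bar\Omega}a(\cdot,t)\leq\lambda_p(N_\Omega^t)\leq -\max_{x\in\bar\Omega}\bigl\{a(x,t)-\tfrac{\mu}{\sigma^m}\int_\Omega J_\sigma(x-y)\,dy\bigr\}$ and the analogous bounds for $\lambda_p(N_\Omega)$, then feed these into Corollary~\ref{17}. The only difference is that the paper states these bounds as ``easy to deduce'' while you spell out the derivation (lower bound from $v\equiv 1$ killing the Neumann dispersal part; upper bound from the monotonicity argument of Lemma~\ref{le204} combined with $h^\sigma\leq 1$), which fills in the same details the paper implicitly relies on.
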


\begin{proof}
(i)
By the definition of $\lambda_{p}(N_{\Omega}^{t})$, it is easy to deduce
\begin{equation}\label{501}
-\underset{x\in\bar{\Omega}}{\max} \{a(x,t)\}\leq\lambda_{p}(N_{\Omega}^{t})\leq-\underset{x\in\bar{\Omega}}
{\max} \bigg\{a(x,t)-\frac{\mu}{\sigma^{m}}\int_{\Omega}J_{\sigma}(x-y)dy\bigg\}.
\end{equation}
Thus, we have
\begin{equation*}
\int_{0}^{1}\lambda_{p}(N_{\Omega}^{t})dt\geq
\int_{0}^{1}-\underset{x\in\bar{\Omega}}{\max} \{a(x,t)\}>0.
\end{equation*}
By Corollary \ref{17}, we deduce the conclusion of part (i).%\\

\medskip
(ii) Owing to the inequality \eqref{501}, we have
\begin{equation*}
\int_{0}^{1}\lambda_{p}(N_{\Omega}^{t})dt\leq\int_{0}^{1}-
\underset{x\in\bar{\Omega}}{\max} \bigg\{a(x,t)-\frac{\mu}{\sigma^{m}}\int_{\Omega}J_{\sigma}(x-y)dy\bigg\}dt\leq
-\int_{0}^{1}
\underset{x\in\bar{\Omega}}{\max} \{a(x,t)\}dt+\frac{\mu}{\sigma^{m}}<0.
\end{equation*}
By the definition of $\lambda_{p}(N_{\Omega})$, we obtain
\begin{equation}\label{502}
-\underset{x\in\bar{\Omega}}{\max} \{\hat{a}(x)\}\leq\lambda_{p}(N_{\Omega})\leq-\underset{x\in\bar{\Omega}}
{\max} \bigg\{\hat{a}(x)-\frac{\mu}{\sigma^{m}}\int_{\Omega}J_{\sigma}(x-y)dy\bigg\},
\end{equation}
which implies that
\begin{equation*}
\lambda_{p}(N_{\Omega})\geq-\underset{x\in\bar{\Omega}}{\max} \{\hat{a}(x)\}>0.
\end{equation*}
The conclusion of part (ii)
thus follows from %According to
Corollary \ref{17}.
%\\ %, we obtain the conclusion.\\

\medskip
(iii) By the inequality \eqref{502}, it implies that
\begin{equation*}
\lambda_{p}(N_{\Omega})\leq
-\underset{x\in\bar{\Omega}}{\max} \bigg\{\hat{a}(x)-\frac{\mu}{\sigma^{m}}\int_{\Omega}J_{\sigma}(x-y)dy\bigg\}
\leq-\underset{x\in\bar{\Omega}}{\max} \{\hat{a}(x)\}+\frac{\mu}{\sigma^{m}}<0.
\end{equation*}
The conclusion of part (iii)
thus follows from %According to
Corollary \ref{17}.
%According to Corollary \ref{17}, the proof of the theorem is complete.
\end{proof}

\subsection{Effects of the dispersal rate}
\noindent

This subsection is devoted to the proof of Theorem \ref{th105}. %Before going to the proof,
We recall the following result in \cite{shen-2019}:
\begin{lemma}\label{le301}
Assume that $f$ satisfies {\rm(F)}. If $\min_{\bar{\Omega}}\hat{a}>0$, then
for each $x\in\bar{\Omega}$, the equation
\begin{equation*}
\tau v_{t}=f(x,t,v)
\end{equation*}
has a unique positive 1-periodic solution, denoted by $v^{\ast}(x,t)$, which is continuous in x.
\end{lemma}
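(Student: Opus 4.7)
The plan is to treat the scalar ODE $\tau v_t = f(x,t,v)$ as a one-parameter family indexed by $x \in \bar{\Omega}$ and analyse the associated time-1 (Poincar\'e) map. For each fixed $x$, set
\begin{equation*}
P_x(v_0) := v(1;\,x,v_0),
\end{equation*}
where $v(\cdot;x,v_0)$ is the unique maximal solution of $\tau v_t = f(x,t,v)$ with $v(0) = v_0$. By assumption (F)(4), if $v = M$ then $v_t \le 0$, so $P_x([0,M]) \subset [0,M]$; moreover $P_x$ is continuous and monotonically increasing in $v_0$ by uniqueness for ODEs. Positive 1-periodic solutions correspond exactly to positive fixed points of $P_x$.

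For existence, I would linearise at $v=0$: since $f(x,t,0)=0$ and $f_u(x,t,0)=a(x,t)$, a Gronwall argument gives $P_x(v_0) = v_0 \exp(\hat a(x)/\tau) + o(v_0)$ as $v_0 \to 0^+$. Because $\hat a(x) > 0$, it follows that $P_x(v_0) > v_0$ for small $v_0 > 0$, while $P_x(M) \le M$. By the intermediate value theorem applied to the continuous function $P_x - \mathrm{id}$, there is a fixed point $v_0^*(x) \in (0,M]$, and hence a positive 1-periodic solution $v^*(x,t) := v(t;x,v_0^*(x))$.

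For uniqueness, I would use the KPP assumption (F)(3). Given two positive 1-periodic solutions $v_1, v_2$, set $w = v_1/v_2 \in C^1(\mathbb R)$. A direct computation yields
\begin{equation*}
\tau w_t = w\bigl( f(x,t,v_1)/v_1 - f(x,t,v_2)/v_2 \bigr).
\end{equation*}
Let $t_0$ be a point where $w$ attains its maximum, so $w_t(t_0) = 0$. If $w(t_0) > 1$, then $v_1(t_0) > v_2(t_0)$, and (F)(3) forces $f(x,t_0,v_1)/v_1 - f(x,t_0,v_2)/v_2 < 0$, contradicting $w_t(t_0) = 0$ (using $w(t_0) > 0$). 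Hence $\max w \le 1$; the symmetric argument applied to $1/w$ gives $\min w \ge 1$, so $w \equiv 1$ and $v_1 = v_2$.

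The continuity in $x$ is the delicate part and is where $\min_{\bar\Omega}\hat a > 0$ (rather than merely $\hat a(x) > 0$ pointwise) is essential. Continuous dependence of ODE solutions on parameters (valid because $f \in C^1$ in $x$ by (F)(1)) shows that $(x,v_0) \mapsto P_x(v_0)$ is continuous. For a sequence $x_n \to x_0$, the values $v_0^*(x_n) \in [0,M]$ are bounded, and any subsequential limit $v_\infty$ satisfies $P_{x_0}(v_\infty) = v_\infty$; by uniqueness, $v_\infty \in \{0, v_0^*(x_0)\}$. To exclude $v_\infty = 0$, I would produce a uniform neighbourhood of $0$ from which $P_x$ strictly expands: using continuity of $f_u$ on the compact set $\bar\Omega\times[0,1]\times[0,1]$ and $\min_{\bar\Omega}\hat a = \alpha > 0$, I can find $\eta, v_0^{\min} > 0$ independent of $x$ such that $P_x(v_0) \ge e^{(\alpha - \eta)/\tau} v_0$ for all $v_0 \in (0,v_0^{\min}]$ and all $x \in \bar\Omega$. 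This forces $v_0^*(x) \ge v_0^{\min}$ uniformly, so $v_\infty \ge v_0^{\min} > 0$, hence $v_\infty = v_0^*(x_0)$. Therefore $x \mapsto v_0^*(x)$ is continuous on $\bar\Omega$, and $v^*(x,t) = v(t;x,v_0^*(x))$ is continuous in $x$ (jointly continuous in $(x,t)$, in fact) by another application of continuous dependence on parameters. The main obstacle is precisely this uniform-in-$x$ lower bound for the fixed point, which is why the hypothesis is phrased with $\min_{\bar\Omega}\hat a > 0$.
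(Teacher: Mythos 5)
The paper states this lemma as a recalled result from \cite{shen-2019} and does not provide a proof, so there is no internal argument to compare against. Your Poincar\'e-map proof is the standard and correct route for this scalar periodic ODE: the time-$1$ map $P_x$ preserves $[0,M]$ by (F)(4) and is strictly increasing in the initial datum by uniqueness for scalar ODEs; strict expansion near $0$ follows from $\hat a(x)>0$ via linearisation; and (F)(3) gives uniqueness of the positive fixed point through the usual maximum argument applied to $w=v_1/v_2$. The uniform lower bound on $v_0^*(x)$ is indeed the right mechanism for passing to limits and getting continuity in $x$, though I would point out that since $\hat a$ is continuous on the compact set $\bar\Omega$, the hypothesis $\min_{\bar\Omega}\hat a>0$ is automatically equivalent to the pointwise condition $\hat a(x)>0$ for every $x\in\bar\Omega$, so the distinction you draw at the very end is notational rather than a genuinely stronger assumption. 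The only step worth making fully explicit is the uniform-in-$(x,t)$ version of the linearisation estimate: assuming (as the paper implicitly does) that $f_u$ is jointly continuous, uniform continuity on $\bar\Omega\times[0,1]\times[0,M]$ gives, for each $\eta>0$, a $\delta>0$ with $f(x,t,u)/u\ge a(x,t)-\eta$ whenever $0<u\le\delta$, uniformly in $(x,t)$; together with the \emph{a priori} bound $v(t)\le v_0\,e^{(\max a)/\tau}$ on $[0,1]$ (from (F)(3), which gives $f(x,t,u)/u\le a(x,t)$), this yields $P_x(v_0)\ge e^{(\hat a(x)-\eta)/\tau}v_0$ for all $v_0\le\delta e^{-(\max a)/\tau}$ and all $x$, which is precisely the uniform expansion your continuity argument uses.
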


Although the proof of Theorem \ref{th105} (i) is similar to \cite[Theorem C]{shen-2019}, we still present the proof
for the convenience of the reader.

\begin{proof}[Proof of Theorem {\rm\ref{th105}}]
(i) By Theorem \ref{th102} (ii), there exists $\mu_{0}>0$ such that $\lambda_{p}(L_{\Omega}^{\tau,\mu,1,0})\leq-{\max_{\bar{\Omega}}
\hat{a}}/{2}<0$  for all $\mu\in(0,\mu_{0})$. Thus, it follows from Lemma \ref{th104}
that  (\ref{105}) admits a unique solution $u_{\mu}^{\ast}\in \chi_{\Omega}^{++}$ which is globally asymptotically stable for all $\mu\in(0,\mu_{0})$.

We claim that for each $0<\epsilon\ll 1$, there exists $\mu_{\epsilon}>0$ such that for each $\mu\in(0,\mu_{\epsilon})$, %there
%holds
\begin{equation*}
v^{\ast}(x,t)-\epsilon\leq u_{\mu}^{\ast}(x,t)\leq v^{\ast}(x,t)+\epsilon,\ \ \ \ (x,t)\in \bar{\Omega}\times\mathbb{R}.
\end{equation*}
Let us prove the lower bound
only as the upper bound follows from similar arguments. Let $0<\epsilon\ll1$. By $\min_{\bar{\Omega}\times\mathbb{R}}v^{\ast}>0$, there exists $\delta=\delta(\epsilon)>0$ such that
\begin{equation*}
v(x,t):=(1-\delta)v^{\ast}(x,t)\geq v^{\ast}(x,t)-\epsilon>0,\ \ \ \ (x,t)\in \bar{\Omega}\times\mathbb{R}.
\end{equation*}
Note that for each $(x,t)\in \bar{\Omega}\times\mathbb{R}$,
\begin{align*}
&-\tau v_{t}(x,t)+\mu \int_{\Omega}J(x-y)(v(y,t)-v(x,t))dy+f(x,t,v(x,t))\\
=&-(1-\delta)\tau v_{t}^{\ast}(x,t)+(1-\delta)\mu \int_{\Omega}J(x-y)(v^{\ast}(y,t)-v^{\ast}(x,t))dy+f(x,t,v(x,t))\\
=&(1-\delta)\mu \int_{\Omega}J(x-y)(v^{\ast}(y,t)-v^{\ast}(x,t))dy
+f(x,t,v(x,t))-(1-\delta)f(x,t,v^{\ast}(x,t)).
\end{align*}
We see that as $\mu\rightarrow0^{+}$,
\begin{equation}\label{310}
(1-\delta)\mu \int_{\Omega}J(x-y)(v^{\ast}(y,t)-v^{\ast}(x,t))dy
\rightarrow0\ \ \ \ \text{uniformly in}\ (x,t)\in\bar{\Omega}\times\mathbb{R}.
\end{equation}
By (F)-(3), there holds for each $(x,t)\in\bar{\Omega}\times\mathbb{R}$,
\begin{equation*}
f(x,t,v(x,t))-(1-\delta)f(x,t,v^{\ast}(x,t))=v(x,t)\bigg[\frac
{f(x,t,v(x,t))}{v(x,t)}-\frac
{f(x,t,v^{\ast}(x,t))}{v^{\ast}(x,t)}\bigg]>0.
\end{equation*}
Thus, there exists $\mu_{\epsilon}>0$ such that for each $\mu\in(0,\mu_{\epsilon})$,
\begin{equation}\label{311}
\tau v_{t}(x,t)\leq\mu \int_{\Omega}J(x-y)(v(y,t)-v(x,t))dy+f(x,t,v(x,t))
\ \ \ \text{for all}\ (x,t)\in\bar{\Omega}\times\mathbb{R}.
\end{equation}

It remains to show that for each $\mu\in(0,\mu_{\epsilon})$, there holds
$v(x,t)\leq u_{\mu}^{\ast}(x,t)$ for all $(x,t)\in\bar{\Omega}\times\mathbb{R}$. To do so, let us fix any $\mu\in(0,\mu_{\epsilon})$ and define
\begin{equation*}
\alpha_{\ast}=\inf \{\alpha>0\ |\ v(x,t)\leq\alpha u_{\mu}^{\ast}(x,t)\ \ \text{for all}\ \ (x,t)\in\bar{\Omega}\times\mathbb{R} \}.
\end{equation*}
Since $u_{\mu}^{\ast}\in\chi_{\Omega}^{++}$ and $v$ is bounded, $\alpha_{\ast}$ is well-defined and positive. Thanks to the continuity
of $u_{\mu}^{\ast}$ and $v$, there holds $v(x,t)\leq\alpha_{\ast} u_{\mu}^{\ast}(x,t)$ and there exists $(x_{0},t_{0})\in\bar{\Omega}\times\mathbb{R}$ such that $v(x_{0},t_{0})=\alpha_{\ast} u_{\mu}^{\ast}(x_{0},t_{0})$.

If $\alpha_{\ast}\leq1$, then we are done. Therefore, let us assume $\alpha_{\ast}>1$. By the inequality (\ref{311}) and (F)-(3),
we see that $w(x,t):=v(x,t)-\alpha_{\ast} u_{\mu}^{\ast}(x,t)$ satisfies
\begin{equation}\label{312}
\begin{split}
\tau w_{t}(x,t)&\leq\mu \int_{\Omega}J(x-y)(w(y,t)-w(x,t))dy+f(x,t,v(x,t))-
\alpha_{\ast}f(x,t,u_{\mu}^{\ast}(x,t))\\
&<\mu \int_{\Omega}J(x-y)(w(y,t)-w(x,t))dy+f(x,t,v(x,t))-
f(x,t,\alpha_{\ast}u_{\mu}^{\ast}(x,t)).
\end{split}
\end{equation}
Hence, setting $(x,t)=(x_{0},t_{0})$ in (\ref{312}) yields
\begin{align*}
0=\tau w_{t}(x_{0},t_{0})<&\mu\int_{\Omega}J(x_{0}-y)
(w(y,t_{0})-w(x_{0},t_{0}))dy\\
&+f(x_{0},t_{0},v(x_{0},t_{0}))-
f(x_{0},t_{0},\alpha_{\ast}u_{\mu}^{\ast}(x_{0},t_{0}))\leq0,
\end{align*}
which is a contradiction. Hence, $\alpha_{\ast}\leq1$ and the proof is complete.

(ii) By Theorem \ref{th103}(i), there exists $\mu_{1}>0$ such that $\lambda_{p}(L_{\Omega}^{\tau,\mu,1,0})\leq-{\bar{\hat{a}}}/{2}<0$  for all $\mu\in(\mu_{1},\infty)$. Thus, it follows from Lemma \ref{th104}
that  (\ref{105}) admits a unique solution $u_{\mu}^{\ast}\in \chi_{\Omega}^{++}$ that is globally asymptotically stable for all $\mu\in(\mu_{1},\infty)$. Thus, we have
\begin{equation} \label{313}
\tau(u_{\mu}^{\ast})_{t}(x,t)=\mu\int_{\Omega}J(x-y)(u_{\mu}^{\ast}(y,t)-
u_{\mu}^{\ast}(x,t))dy+f(x,t,u_{\mu}^{\ast}(x,t)),~~~~~ (x,t)\in \bar{\Omega}\times\mathbb{R}.
\end{equation}
It is easy to check that there holds
\begin{equation*}
\underset{t\in[0,1]}{\sup}\|f(\cdot,t,u_{\mu}^{\ast}(\cdot,t))\|_{L^{\infty}
(\Omega)}\leq\underset{t\in[0,1]}{\sup}\|a(\cdot,t)u_{\mu}^{\ast}(\cdot,t)\|
_{L^{\infty}(\Omega)}\leq
M\underset{(x,t)\in\bar{\Omega}\times[0,1]}{\max}|a(x,t)|:=M A.
\end{equation*}

Multiplying (\ref{313}) by $u_{\mu}^{\ast}$ and integrating
over $\Omega\times(0,1)$ yield
\begin{align*}
\tau\int_{0}^{1}\int_{\Omega} (u_{\mu}^{\ast})_{t}u_{\mu}^{\ast} dxdt=&\int_{0}^{1}\int_{\Omega}f(x,t,u_{\mu}^{\ast}(x,t))u_{\mu}^{\ast}(x,t)
dxdt\\
&+\mu \int_{0}^{1}\int_{\Omega}\int_{\Omega}J(x-y) (u_{\mu}^{\ast}(y,t)-u_{\mu}^{\ast}(x,t))u_{\mu}^{\ast}(x,t)dydxdt.
\end{align*}
In view of $u_{\mu}^{\ast}\in \chi_{\Omega}^{++}$ and the symmetry of $J$,
we have
\begin{align*}
\frac{\mu}{2} \int_{0}^{1}\int_{\Omega}\int_{\Omega}J(x-y) (u_{\mu}^{\ast}(y,t)-u_{\mu}^{\ast}(x,t))^{2}dydxdt
=\int_{0}^{1}\int_{\Omega}f(x,t,u_{\mu}^{\ast}(x,t))u_{\mu}^{\ast}(x,t)
dxdt,
\end{align*}
which implies that
\begin{equation}\label{314}
\begin{split}
&\int_{0}^{1}\int_{\Omega}\int_{\Omega}J(x-y) (u_{\mu}^{\ast}(y,t)-u_{\mu}^{\ast}(x,t))^{2}dydxdt\\
=&\frac{2}{\mu}\int_{0}^{1}\int_{\Omega}f(x,t,u_{\mu}^{\ast}(x,t))u_{\mu}^{\ast}(x,t)
dxdt\\
\leq&\frac{2M^{2}A|\Omega|}{\mu}.
\end{split}
\end{equation}
Let us assume $U(x,t)=u_{\mu}^{\ast}(x,t)-\bar{u}(t)$, where $\bar{u}(t)=\frac{1}{|\Omega|}\int_{\Omega}u_{\mu}^{\ast}(x,t)dx$.
We get $\int_{\Omega}U(x,t)dx=0$. Integrating (\ref{313}) over
$\Omega$ and substituting
$U(x,t)=u_{\mu}^{\ast}(x,t)-\bar{u}(t)$ yield
\begin{equation*}
\begin{split}
\tau\bar{u}_{t}(t)&=\frac{1}{|\Omega|}
\int_{\Omega}f(x,t,U(x,t)+\bar{u}(t))dx\\
&=\frac{1}{|\Omega|}
\int_{\Omega}f(x,t,\bar{u}(t))dx+\frac{1}{|\Omega|}
\int_{\Omega}\big(f(x,t,U(x,t)+\bar{u}(t))-f(x,t,\bar{u}(t))\big)dx.
\end{split}
\end{equation*}
Owing to assumption (F), we have
\begin{equation*}
\begin{split}
&\bigg|\int_{\Omega}\big(f(x,t,U(x,t)+\bar{u}(t))-f(x,t,\bar{u}(t))\big)dx\bigg|\\
\leq&\bigg|\int_{\Omega}a(x,t)U(x,t)dx\bigg|\\
\leq&A|\Omega|^{\frac{1}{2}}\bigg(\int_{\Omega}U^{2}(x,t)dx\bigg)^{\frac{1}{2}}
\end{split}
\end{equation*}
By \cite[Page 1688, Formula (5.6)]{Shen-2015-DCDS}, there exists $C>0$ such that
\begin{equation}\label{315}
\int_{\Omega}U^{2}(x,t)dx\leq\frac{1}{2C}\int_{\Omega}\int_{\Omega}J(x-y) (U(y,t)-U(x,t))^{2} dydx~~~~~~~~\ \ \ \  \text{for all}\ \ \mu\gg1.
\end{equation}
By  (\ref{314}) and (\ref{315}), there holds
\begin{equation*}
\int_{0}^{1}\int_{\Omega}U^{2}(x,t)dxdt\leq\frac{M^{2}A|\Omega|}{C\mu},
\end{equation*}
which implies that
\begin{equation*}
\int_{0}^{1}\bigg|\int_{\Omega}\big(f(x,t,U(x,t)+\bar{u}(t))-f(x,t,\bar{u}(t))
\big)dx\bigg|dt=
O(\mu^{-\frac{1}{2}})\ \ \ \ \text{as} \ \ \mu\gg1.
\end{equation*}
Thus, we have
\begin{equation*}
\underset{\mu \rightarrow \infty }{\lim }\ u_{\mu}^{\ast}(x,t)=v^{\ast}(t) \ \ \text{uniformly in} \ \ (x,t)\in \bar{\Omega}\times\mathbb{R},
\end{equation*}
where $v^{\ast}(t)$ is the unique positive 1-periodic solution or zero solution of the  equation
\begin{equation*}
\tau v_{t}(t)=\frac{1}{|\Omega|}\int_{\Omega}f(x,t,v(t))dx.
\end{equation*}

Finally, we exclude that $v^{\ast}(t)$ is zero solution.
Divide (\ref{313}) by $u_{\mu}^{\ast}$ and integrate over $\Omega\times(0,1)$
to obtain
\begin{align*}
\tau\int_{0}^{1}\int_{\Omega} \frac{(u_{\mu}^{\ast})_{t}(x,t)}{u_{\mu}^{\ast}
(x,t)}dxdt=&\mu \int_{0}^{1}\int_{\Omega}\int_{\Omega}J(x-y) \frac{u_{\mu}^{\ast}(y,t)-u_{\mu}^{\ast}(x,t)}{u_{\mu}^{\ast}(x,t)}dydxdt\\
&+\int_{0}^{1}\int_{\Omega} \frac{f(x,t,u_{\mu}^{\ast}(x,t))}{u_{\mu}^{\ast}(x,t)}dxdt\ \ \ \  \text{for all}\ \ \mu\in(\mu_{1},\infty).
\end{align*}
Owing to $u_{\mu}^{\ast}\in \chi_{\Omega}^{++}$ and the symmetry of $J$,
we get
\begin{equation*}
\int_{0}^{1}\int_{\Omega} \frac{f(x,t,u_{\mu}^{\ast}(x,t))}{u_{\mu}^{\ast}(x,t)}dxdt=
-\frac{\mu}{2} \int_{0}^{1}\int_{\Omega}\int_{\Omega}J(x-y) \frac{(u_{\mu}^{\ast}(y,t)-u_{\mu}^{\ast}(x,t))^{2}}
{u_{\mu}^{\ast}(x,t)u_{\mu}^{\ast}(y,t)}dydxdt
\end{equation*}
for all $\mu\in(\mu_{1},\infty)$.
This implies that
\begin{equation*}
\int_{0}^{1}\int_{\Omega} \frac{f(x,t,u_{\mu}^{\ast}(x,t))}{u_{\mu}^{\ast}(x,t)}dxdt\leq0\ \ \ \  \text{for all}\ \ \mu\in(\mu_{1},\infty).
\end{equation*}
If $v^{*}(t)\equiv0$, then it follows from the above inequality and assumption
(F) that
\begin{equation*}
\int_{0}^{1}\int_{\Omega}a(x,t)dxdt\leq0,
\end{equation*}
which contradicts  $\bar{\hat{a}}>0$.
Therefore, $v^{\ast}$ is non-zero and the proof is complete.
\end{proof}
\subsection{Effects of the dispersal spread}
\noindent

This subsection is devoted to the proof of Theorem \ref{th106}.

\begin{proof}[Proof of Theorem {\rm\ref{th106}}]
(i) It follows from Theorem \ref{th102} and Lemma \ref{th104} that
there exists $\sigma_{1}>0$ such that  (\ref{105}) admits a unique solution $u_{\sigma}^{\ast}\in \chi_{\Omega}^{++}$ that is globally asymptotically stable for all $\sigma\in(\sigma_{1},\infty)$.
Similar to the proof of Theorem \ref{th105} (i), we have
\begin{equation*}
\underset{\sigma \rightarrow \infty }{\lim }\ u_{\sigma}^{\ast}(x,t)=v^{\ast}(x,t) \ \ \text{uniformly in} \ \ (x,t)\in \bar{\Omega}\times\mathbb{R}.
\end{equation*}

%We refer the reader to the proof of Theorem \ref{th105} (i). In fact,
%we can follow the lines as in the proof of Theorem \ref{th105} (i)
%except that we replace the limit (\ref{310})
%\begin{align*}
%&(1-\delta)\mu \int_{\Omega}J(x-y)(v^{\ast}(y,t)-v^{\ast}(x,t))dy
%\rightarrow0\ \ \ \ \text{as}\ \ \mu\rightarrow0^{+}\\
%&\ \ \ \ \text{uniformly in}\ (x,t)\in\bar{\Omega}\times\mathbb{R}
%\end{align*}
%by the following limit
%\begin{align*}
%&\frac{(1-\delta)\mu}{\sigma^{m}} \int_{\Omega}J_{\sigma}(x-y)(v^{\ast}(y,t)-v^{\ast}(x,t))dy
%\rightarrow0\ \ \ \ \text{as}\ \ \ \sigma\rightarrow\infty\\
%&\ \ \ \ \text{uniformly in}\ (x,t)\in\bar{\Omega}\times\mathbb{R}.
%\end{align*}

(ii) It follows from Theorem \ref{th103} and Lemma \ref{th104} that
there exists $\sigma_{0}>0$ such that  (\ref{105}) admits a unique solution $u_{\sigma}^{\ast}\in \chi_{\Omega}^{++}$ that is globally asymptotically stable for all $\sigma\in(0,\sigma_{0})$.
By the same proof of Theorem \ref{th105} (i), we can also obtain that
\begin{equation*}
\underset{\sigma \rightarrow 0^{+} }{\lim }\ u_{\sigma}^{\ast}(x,t)=v^{\ast}(x,t) \ \ \text{uniformly in} \ \ (x,t)\in \bar{\Omega}\times\mathbb{R}.
\end{equation*}
%\begin{align*}
%&\frac{(1-\delta)\mu}{\sigma^{m}} \int_{\Omega}J_{\sigma}(x-y)(v^{\ast}(y,t)-v^{\ast}(x,t))dy
%\rightarrow0\ \ \ \ \text{as}\ \ \ \sigma\rightarrow0^{+}\\
%&\ \ \ \ \text{uniformly in}\ (x,t)\in\bar{\Omega}\times\mathbb{R}.
%\end{align*}
The proof is complete.
\end{proof}

%\addcontentsline{toc}{section}{Acknowledgments}

\section*{Acknowledgments}
\noindent

YHS was supported by the China Scholarship Council.
WTL was partially supported by NSF of China (11731005, 11671180),
YL was partially supported by NSF (DMS-1853561)
and FYY was partially supported by NSF of China (11601205).
We thank Shuang Liu and Zhongwei Shen for helpful discussions.
%for his discussion of the dependence of the principal eigenvalue on the frequency.
%We are also grateful to Zhongwei Shen for helpful suggestions on
%the continuity of the generalised principal eigenvalue with respect to
%the dispersal rate.

%%%%%%%%%%%%%%%%%%%%%%%%%%%%%%%%%%%%%%%%%%%%%%%%%%%%%%%%%%%%%%%%%%%%%%%%%%%%%%%%%%%%%%%
%\addcontentsline{toc}{section}{References}

\end{document}